\newif\ifpersonal
\newif\ifarxiv
\definecolor{linkcolor}{HTML}{005050}
\DeclareMathAlphabet{\mathcal}{OMS}{cmsy}{m}{n} 
\theoremstyle{definition}
\newtheorem{theorem}{Theorem}[section]
\newtheorem{lemma}[theorem]{Lemma}
\newtheorem{corollary}[theorem]{Corollary}
\newtheorem{proposition}[theorem]{Proposition}
\newtheorem{definition}[theorem]{Definition}
\newtheorem{example}[theorem]{Example}
\newtheorem{thm-intro}{Theorem}
\newtheorem*{thm*}{Theorem}
\newtheorem*{lem*}{Lemma}
\newtheorem{cor-intro}[thm-intro]{Corollary}
\theoremstyle{definition}
\newtheorem{defin-intro}[thm-intro]{Definition}
\theoremstyle{remark}
\newtheorem{rem}{Remark}
\newtheorem{eg-intro}[thm-intro]{Example}
\newtheorem{rem-intro}[thm-intro]{Remark}
\numberwithin{equation}{section}
\newcommand*{\personal}[1]{\textcolor[rgb]{0.6,0.6,1}{(Personal: #1)}}
\newcommand*{\todo}[1]{\textcolor{red}{(Todo: #1)}}
\newcommand*{\personal}[1]{\ignorespaces}
\newcommand*{\todo}[1]{\ignorespaces}
\newcommand{\Tot}{\textbf{Tot}}
\newcommand{\C}{\mathbb C}
\newcommand{\Z}{\mathbb Z}
\newcommand{\bbR}{\mathbb R}
\newcommand{\ft}{\mathfrak t}
\newcommand{\fg}{\mathfrak{g}}
\newcommand{\fq}{\mathfrak q}
\newcommand{\cD}{\mathcal D}
\newcommand{\cJ}{\mathcal J}
\newcommand{\cK}{\mathcal K}
\newcommand{\cM}{\mathcal M}
\newcommand{\cO}{\mathcal O}
\newcommand{\cT}{\mathcal T}
\newcommand{\cV}{\mathcal V}
\newcommand{\cX}{\mathcal X}
\newcommand{\cY}{\mathcal Y}
\DeclareFontFamily{U}{BOONDOX-calo}{\skewchar\font=45 }
\DeclareFontShape{U}{BOONDOX-calo}{m}{n}{<-> s*[1.05] BOONDOX-r-calo}{}
\DeclareFontShape{U}{BOONDOX-calo}{b}{n}{<-> s*[1.05] BOONDOX-b-calo}{}
\DeclareMathAlphabet{\mathcalboondox}{U}{BOONDOX-calo}{m}{n}
\newcommand{\bbH}{\mathbb H}
\newcommand{\sm}{\Sigma}
\newcommand{\cs}{\widetilde{\Sigma}}
\newcommand{\ts}{\widebar{\Sigma}}
\newcommand{\wv}{\widehat{V}}
\newcommand{\wpi}{\widehat{\pi}}
\newcommand{\Jac}{\mathrm{Jac}}
\newcommand{\Prym}{\mathrm{Prym}}
\newcommand{\Nm}{\mathrm{Nm}}
\newcommand{\Tr}{\mathrm{Tr}}
\newcommand{\Gr}{\text{Gr}}
\newcommand{\e}{\epsilon}
\let\save@mathaccent\mathaccent
\newcommand*\if@single[3]{%
	\setbox0\hbox{${\mathaccent"0362{#1}}^H$}%
	\setbox2\hbox{${\mathaccent"0362{\kern0pt#1}}^H$}%
	\ifdim\ht0=\ht2 #3\else #2\fi
}
\newcommand*\rel@kern[1]{\kern#1\dimexpr\macc@kerna}
\newcommand*\widebar[1]{\@ifnextchar^{{\wide@bar{#1}{0}}}{\wide@bar{#1}{1}}}
\newcommand*\wide@bar[2]{\if@single{#1}{\wide@bar@{#1}{#2}{1}}{\wide@bar@{#1}{#2}{2}}}
\newcommand*\wide@bar@[3]{%
	\begingroup
	\def\mathaccent##1##2{%
		\let\mathaccent\save@mathaccent
		\if#32 \let\macc@nucleus\first@char \fi
		\setbox\z@\hbox{$\macc@style{\macc@nucleus}_{}$}%
		\setbox\tw@\hbox{$\macc@style{\macc@nucleus}{}_{}$}%
		\dimen@\wd\tw@
		\advance\dimen@-\wd\z@
		\divide\dimen@ 3
		\@tempdima\wd\tw@
		\advance\@tempdima-\scriptspace
		\divide\@tempdima 10
		\advance\dimen@-\@tempdima
		\ifdim\dimen@>\z@ \dimen@0pt\fi
		\rel@kern{0.6}\kern-\dimen@
		\if#31
		\overline{\rel@kern{-0.6}\kern\dimen@\macc@nucleus\rel@kern{0.4}\kern\dimen@}%
		\advance\dimen@0.4\dimexpr\macc@kerna
		\let\final@kern#2%
		\ifdim\dimen@<\z@ \let\final@kern1\fi
		\if\final@kern1 \kern-\dimen@\fi
		\else
		\overline{\rel@kern{-0.6}\kern\dimen@#1}%
		\fi
	}%
	\macc@depth\@ne
	\let\math@bgroup\@empty \let\math@egroup\macc@set@skewchar
	\mathsurround\z@ \frozen@everymath{\mathgroup\macc@group\relax}%
	\macc@set@skewchar\relax
	\let\mathaccentV\macc@nested@a
	\if#31
	\macc@nested@a\relax111{#1}%
	\else
	\def\gobble@till@marker##1\endmarker{}%
	\futurelet\first@char\gobble@till@marker#1\endmarker
	\ifcat\noexpand\first@char A\else
	\def\first@char{}%
	\fi
	\macc@nested@a\relax111{\first@char}%
	\fi
	\endgroup
}
\tikzset{
  closed/.style = {decoration = {markings, mark = at position 0.5 with { \node[transform shape, xscale = .8, yscale=.4] {/}; } }, postaction = {decorate} },
  open/.style = {decoration = {markings, mark = at position 0.5 with { \node[transform shape, scale = .7] {$\circ$}; } }, postaction = {decorate} }
}
\DeclareMathOperator{\coker}{coker}
\DeclareMathOperator{\End}{End}
\DeclareMathOperator{\Hom}{Hom}
\title{Semi-polarized
meromorphic Hitchin and Calabi-Yau integrable systems}
\author{Jia Choon Lee and Sukjoo Lee}
\address{Peking University, Beijing International Center for Mathematical Research, Jingchunyuan Courtyard \#78, 5 Yiheyuan Road, Haidian District,
Beijing 100871, China}
\email{jiachoonlee@gmail.com}
\address{Department of Mathematics, University of Edinburgh, EH9 3FD, UK}
\email{Sukjoo.Lee@ed.ac.uk}
\begin{document}

\maketitle
\begin{abstract}
    It was shown by Diaconescu, Donagi and Pantev that Hitchin systems of type ADE are isomorphic to certain Calabi-Yau integrable systems.
    In this paper, we prove an analogous result in the setting of meromorphic Hitchin systems of type A 
    which are known to be Poisson integrable systems. We consider a symplectization of the meromorphic Hitchin integrable
system, which is a semi-polarized integrable system in the sense of
Kontsevich and Soibelman. On the Hitchin side, we show that the moduli space of unordered diagonally framed Higgs bundles forms an integrable system in this sense and recovers the meromorphic Hitchin system as the fiberwise compact quotient. Then we construct a family of quasi-projective Calabi-Yau threefolds and show that its relative intermediate Jacobian fibration, as a semi-polarized integrable system, is isomorphic to the moduli space of unordered diagonally framed Higgs bundles.

\end{abstract}
\tableofcontents

\section{Introduction}
\subsection{Introduction}
Since the seminal work of Hitchin \cite{hitchin1987self}\cite{hitchin1987stable}, Higgs bundles and their moduli spaces have been studied extensively. There have been numerous deep results on the moduli space of Higgs bundles on a smooth projective curve related to other areas of mathematics such as the $P=W$ conjecture \cite{de2012topology}\cite{de2019hitchin}, the fundamental lemma in the Langlands program \cite{ngo2006fibration}\cite{ngo2010lemme}, the geometric Langlands conjecture \cite{Kapustin_2007} and mirror symmetry \cite{Hausel_2003}\cite{Langlandsduality}. One of the striking properties of these moduli spaces is that they admit a holomorphic symplectic form and the structure of an integrable system, called the \textit{Hitchin system}. In particular, the generic fiber of an integrable system is an abelian variety which turns out to be the Jacobian or (generalized) Prym variety of an associated spectral or cameral curve. This picture generalizes to the meromorphic situation where we allow the Higgs field to have poles along some divisors. While the meromorphic Hitchin system is no longer symplectic, it is still Poisson and integrable with respect to the Poisson structure \cite{markman1994spectral}\cite{bottacin1995symplectic}. 

On the other hand, Donagi-Markman and Donagi-Diaconescu-Pantev (DDP) introduced in \cite{donagi1994cubics}\cite{donagi1996spectral}\cite{diaconescu2006geometric}\cite{diaconescu2006intermediate} integrable systems coming from some families of projective or quasi-projective Calabi-Yau threefolds, called \textit{Calabi-Yau integrable systems}. A generic fiber is a complex torus or an abelian variety \cite{diaconescu2006geometric}\cite{diaconescu2006intermediate}, now obtained as the intermediate Jacobian of a Calabi-Yau threefold in the family. 

It is shown in \cite{diaconescu2006intermediate} that for adjoint groups $G$ of type ADE, there is an isomorphism between $G$-Hitchin systems and suitable Calabi-Yau integrable systems, which we call the \textit{DDP correspondence}. An interesting aspect of the construction in \cite{diaconescu2006intermediate} is that although the relevant Calabi-Yau threefold is non-compact, the (a priori mixed) Hodge structure on its third cohomology happened to be pure of weight one up to Tate twist. Because of this, the corresponding intermediate Jacobian is a compact torus (in fact an abelian variety). Since the data of a weight 1 Hodge structure is equivalent to the data of an abelian variety, this isomorpshism can be rephrased as an isomorphism between variations of weight  1 Hodge structures equipped with the abstract Seiberg-Witten differential (see \cite[Section 2]{beck2017hitchin}).

It is worth mentioning that the origin of this story comes from physics, specifically, large $N$ duality \cite{diaconescu2006geometric}. Recently, the correspondence has also found its place in the study of T-branes in F-theory \cite{anderson2014t}\cite{anderson2017t}. 

The isomorphism between Hitchin and Calabi-Yau integrable systems has been generalized successfully to groups of type BCFG by the work of Beck et al.  \cite{beck2017hitchin}\cite{beck2019calabi}\cite{beck2020folding} using the technique of foldings. 

\subsection{Main results}
The goal of this paper is to extend the DDP correspondence to the setting of meromorphic $SL(n,\C)$-Hitchin system $h:\cM(n,D) \to B$ where $D$ is a reduced divisor of the base curve. The best case scenario will be to construct a family of non-compact Calabi-Yau threefolds over the same base $B$ and show that the associated Calabi-Yau integrable system is isomorphic to the meromorphic Hitchin system as Poisson integrable systems. However, since the deformation space of such non-compact Calabi-Yau's is strictly smaller than the base $B$, we do not expect to get a natural family which induces the Possion integrable system (see \cite[Chapter 7]{kontsevich2014wall}).

Instead, we consider the notion of \textit{semi-polarized integrable systems} introduced by Kontsevich-Soibelman \cite{kontsevich2014wall}. These are non-compact versions of symplectic integrable systems whose fiber is a semi-abelian variety, an extension of an abelian variety by an affine torus. The main advantage is that they canonically induce the Poisson integrable systems as their compact quotients. In Section 2, we study this structure from the Hodge theoretic viewpoint. Since the data of a semi-polarized semi-abelian variety is equivalent to the data of a semi-polarized $\Z$-mixed Hodge structure of type $\{(-1,-1), (-1,0), (0,-1)\}$ (see Appendix A), the semi-polarized integrable system can be described as a variation of $\Z$-mixed Hodge structures of such type with an abstract Seiberg-Witten differential as in the pure case \cite{diaconescu2006intermediate}.

The main objects on the Hitchin side are the moduli space of diagonally framed Higgs bundles (resp. unordered), introduced by Biswas-Logares-Pe\'{o}n-Nieto \cite{biswas2018symplectic}\cite{biswas2019symplecticGHiggs}\footnote{In \cite{biswas2019symplecticGHiggs}, what we call "diagonally framed" is referred to as "relatively framed" in [21].}, and we denote these moduli space by $\overline{\cM}^{\Delta}(n,D)$ (resp. $\cM^{\Delta}(n,D)$). The moduli space $\overline{\cM}^{\Delta}(n,D)$ is a subspace of the moduli space of framed Higgs bundles $\cM_F(n,D)$ whose object is a triple $(E, \theta, \delta)$ where $(E,\theta)$ is a $SL(n,\C)$-Higgs bundle and $\delta$ is a framing of $E$ at $D$. As the name suggests, an object in $\overline{\cM}^\Delta(n,D)$ is a framed Higgs bundle such that the residue of its Higgs field is diagonal with respect to the framing $\delta$. The unordered version $\cM^\Delta(n,D)$ is obtained as the quotient of $\overline{\cM}^\Delta(n,D)$ by $S_n^{|D|}$ where $S_n^{|D|}$ is the product of symmetric groups $S_n$ acting on the space of the framings by permuting the order of components. The following diagram summarizes the relation among the moduli spaces:
\begin{equation}
    \begin{tikzcd}
    \overline{\cM}^{\Delta}(n,D) \arrow[r,hook]\arrow[d,"q"]&\cM_F(n,D)\arrow[d,"f_1"]\\
    \cM^\Delta(n,D) \arrow[r,"f_2"]\arrow[rd,"h_\Delta"]&\cM(n,D)\arrow[d,"h"] \\
    &B
    \end{tikzcd}
\end{equation}
where $q:\overline{\cM}^\Delta(n,D)\to \cM^\Delta(n,D)$ is the quotient map, $f_1$ and $f_2$ are the maps of forgetting the framings and $h_{\Delta}:=h \circ f_2:\cM^{\Delta}(n,D) \to B$ is the Hitchin map on the moduli space of unordered diagonally framed Higgs bundles that we will study. In this paper, we will mainly work over the locus $B^{ur}\subset B$ of smooth cameral curves which are unramified over $D$ and have simple ramifications. In particular, for a triple $(E, \theta, \delta)$ over $b\in B^{ur}$, the residue of $\theta$ over $D$ has distinct eigenvalues. We shall write the restrictions as $\overline{\cM}^\Delta(n,D)^{ur}:=(h_{\Delta}\circ q)^{-1}(B^{ur}) $ and $\cM^\Delta(n,D)^{ur}:= h^{-1}_{\Delta}(B^{ur}) $. 

We will show that $\overline{\cM}^\Delta(n,D)^{ur}$ and $\cM^\Delta(n,D)^{ur}$ are symplectic using deformation theoretic arguments. They also carry a smooth semi-polarized integrable system structure over the locus $B^{ur}$. The following is the first result of the paper. 

\begin{theorem}(Proposition \ref{prop:symplectic}, Corollary \ref{hitchin fibration is S-PIS.})
    The moduli space of unordered diagonally framed Higgs bundle $\cM^{\Delta}(n,D)$ is symplectic. The Hitchin fibration
    \begin{equation*}
        h^{ur}_\Delta:\cM^{\Delta}(n,D)^{ur} \to B^{ur}
    \end{equation*} forms a smooth semi-polarized integrable system whose fiber is a semi-abelian variety.
\end{theorem}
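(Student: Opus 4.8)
The plan is to prove the two assertions separately and then assemble them: symplecticity via deformation theory on the ordered space $\overline{\cM}^{\Delta}(n,D)^{ur}$ followed by descent, and the integrable-system structure via the spectral correspondence, which exhibits the fibers as generalized Pryms.

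First I would identify the tangent space to $\overline{\cM}^{\Delta}(n,D)$ at a triple $(E,\theta,\delta)$ with the first hypercohomology $\mathbb{H}^{1}$ of a deformation complex built from the Higgs complex
\[
\End(E)\xrightarrow{\ \mathrm{ad}(\theta)\ }\End(E)\otimes K(D),
\]
where $K$ is the canonical bundle of the base curve $X$, modified at $D$ by the framing and diagonal-residue conditions: infinitesimal automorphisms are required to preserve $\delta$ and to be diagonal along $D$, while admissible Higgs deformations must keep the residue diagonal. The guiding observation is that, since $K(D)=K\otimes\cO(D)$, the two terms of this modified complex are interchanged by $F\mapsto F^{\vee}\otimes K$ (with the trace pairing providing $\End(E)^{\vee}\cong\End(E)$); hence the complex is Serre-self-dual and Serre duality yields an alternating pairing on $\mathbb{H}^{1}$. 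For the unframed space $\cM(n,D)$ the analogous complex fails to be self-dual --- the Serre dual of $\End(E)$ is $\End(E)\otimes K$ rather than $\End(E)\otimes K(D)$ --- which is precisely why $\cM(n,D)$ is only Poisson. I expect the crux of the argument to be the non-degeneracy of the resulting pairing: one must verify that the residue pairing at $D$ supplied by the framing exactly cancels the degeneracy of the naive pairing along the polar directions, with the correct signs and twists.

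Granting non-degeneracy, I would then check that this holomorphic $2$-form is closed (realizing it as the restriction of the Liouville form on the framed cotangent-type space, or by a direct \v{C}ech computation) and that it is invariant under the $S_{n}^{|D|}$-action permuting the components of the framing, since permuting eigenlines is a symplectomorphism. Over $B^{ur}$ the residue has distinct eigenvalues, so this action is free; thus $q$ restricts to an \'etale cover and the form descends to a holomorphic symplectic form on $\cM^{\Delta}(n,D)^{ur}$. This establishes the symplectic assertion on the smooth locus.

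For the integrable-system structure I would pass to spectral data over $B^{ur}$. A point of the fiber $h_{\Delta}^{-1}(b)$ corresponds to a line bundle of fixed determinant on the smooth spectral curve $\tX_{b}$ together with trivializations at the $n|D|$ points $\tD_{b}\subset\tX_{b}$ lying over $D$, so the fiber is a torsor under the generalized Prym variety $\Prym(\tX_{b},\tD_{b})$. This is a semi-abelian variety: the trivialization data realizes it as an extension
\[
1\longrightarrow T\longrightarrow \Prym(\tX_{b},\tD_{b})\longrightarrow \Prym(\tX_{b})\longrightarrow 0
\]
of the usual polarized abelian Hitchin fiber $\Prym(\tX_{b})$ by an affine torus $T$. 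A dimension count shows the fibers are half-dimensional, and the Hitchin Hamiltonians Poisson-commute, so the fibers are Lagrangian semi-abelian varieties; together with the canonical polarization on the abelian quotient this verifies the Kontsevich--Soibelman axioms for a smooth semi-polarized integrable system, with the abstract Seiberg--Witten differential provided by the tautological section as in the pure case. Forgetting the torus directions $T$ then recovers the meromorphic Hitchin system $h$ as the fiberwise compact quotient, matching its Poisson structure.
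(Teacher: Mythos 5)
Your outline follows the paper's strategy at every stage, but the pivotal duality step in the symplecticity argument is wrong as stated. The deformation complex of a diagonally framed Higgs bundle is $C^\bullet: \fg_E(-D)\xrightarrow{[\cdot,\theta]}\fg'_E\otimes K_\sm(D)$, where $\fg_E=\End_0(E)$, the degree-zero term consists of endomorphisms \emph{vanishing} along $D$ (preserving the framing forces vanishing, not merely diagonality, at $D$), and $\fg'_E\subset\fg_E$ is the subsheaf of endomorphisms diagonal along $D$. This complex is \emph{not} Serre self-dual: its Serre dual $\check{C}^\bullet$ has degree-zero term $(\fg'_E)^\vee\otimes\cO_\sm(-D)$, which locally near $D$ is $\ft(-D)\oplus\fq$ and strictly contains $\fg_E(-D)\cong\ft(-D)\oplus\fq(-D)$, and degree-one term $\fg_E^\vee\otimes K_\sm(D)$, which strictly contains $\fg'_E\otimes K_\sm(D)$; the diagonal and off-diagonal summands $\ft$ and $\fq$ acquire different twists under dualization. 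The interchange $F\mapsto F^\vee\otimes K_\sm$ that you invoke does give self-duality for the \emph{framed} complex $C_F^\bullet:\fg_E(-D)\to\fg_E\otimes K_\sm(D)$ (which is why $\cM_F(n,D)$ is symplectic), but imposing the diagonal-residue condition breaks it, and the step you yourself flag as the crux is left without the idea that makes it work.

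The paper's repair (Proposition \ref{deformationcomplex}) introduces the auxiliary complex $C_1^\bullet:\fg''_E\to\fg_E\otimes K_\sm(D)$, where $\fg''_E$ is the sheaf of endomorphisms anti-diagonal along $D$: the trace pairing gives an isomorphism of complexes $C_1^\bullet\cong\check{C}^\bullet$, and the inclusion $C^\bullet\hookrightarrow C_1^\bullet$ is a quasi-isomorphism because its cokernel is the torsion complex $\fq_D\xrightarrow{[\cdot,\theta]|_D}\fq_D\otimes K_\sm(D)$, which is acyclic precisely because over $B^{ur}$ the residue $\theta|_D$ is regular semisimple, so $\mathrm{ad}(\theta|_D)$ is invertible on $\fq$. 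This is where the restriction to $B^{ur}$ is genuinely needed for the symplectic form to exist at all — in your proposal unramifiedness over $D$ enters only to make the $S_n^{|D|}$-action free, so the non-degeneracy you hope for would simply fail off $B^{ur}$, and indeed the paper only asserts symplecticity of $\overline{\cM}^\Delta(n,D)^{ur}$ and $\cM^\Delta(n,D)^{ur}$. The rest of your plan matches the paper: closedness by identifying $\omega_\Delta$ as the restriction of $\omega_F$ (Proposition \ref{symplecticform}), descent along the free action, the spectral-correspondence description of the fiber as the semi-abelian extension $0\to(\C^*)^{(n-1)d}\to\Prym(\ts_b^\circ,\sm^\circ)\to\Prym(\ts_b,\sm)\to 0$, and Lagrangianity from the half-dimension count together with Poisson-commutativity of the restricted Hitchin Hamiltonians (Propositions \ref{spectralcorrespondence}, \ref{lagrangian}, Corollary \ref{hitchin fibration is S-PIS.}).
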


In order to prove this, we study the fiber $(h^{ur}_{\Delta})^{-1}(b)$ over each $b\in B^{ur}$ via the spectral correspondence between unordered diagonally framed Higgs bundles on $\sm$ and framed line bundles on the associated spectral cover $ \overline{p}_b:\ts_b \to \sm$. The framed line bundles on $\ts_b$ are then parametrized by the Prym variety $\Prym(\ts^\circ_b, \sm^\circ)$ associated to the restricted spectral cover $\overline{p}^\circ_b: = \overline{p}_b|_{\ts_b^\circ}: \ts^\circ_b\to \sm^\circ$ where $\ts^\circ_b:= \ts_b\setminus \overline{p}_b^{-1}(D)$ and $\sm^\circ: = \sm\setminus D.$ More precisely, $\Prym(\ts^\circ_b,\sm^\circ)$ is a semi-abelian variety defined as the connected component of the identity element of the kernel of the punctured norm map $\Nm^\circ: \Jac(\ts^\circ_b) \to \Jac(\sm^\circ).$ 

\begin{proposition}(Proposition \ref{spectralcorrespondence}, Spectral correspondence) 
A generic fiber $h_\Delta^{-1}(b)$ is canonically isomorphic to the semi-abelian variety $\Prym(\overline{\sm}_b^\circ, \sm^\circ)$. In particular, the first homology $H_1(\Prym(\overline{\sm}_b^\circ, \sm^\circ))$ admits a $\Z$-mixed Hodge structure of type $\{(-1,-1),(-1,0),(0,-1)\}$.
\end{proposition}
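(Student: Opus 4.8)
The plan is to upgrade the classical Beauville--Narasimhan--Ramanan spectral correspondence to the meromorphic, diagonally framed $SL(n,\C)$ setting, and then to read off the mixed Hodge structure directly from the resulting semi-abelian description. Fix $b\in B^{ur}$, write $\overline{p}_b\colon\ts_b\to\sm$ for the (smooth) spectral cover, and set $R:=\overline{p}_b^{-1}(D)$, a reduced divisor of $n|D|$ points because $b$ lies in the unramified locus. I begin from the unframed picture: over $b$ the characteristic polynomial cuts out $\ts_b$ inside the total space of the twisting bundle, and $GL(n)$-Higgs bundles with this spectral curve correspond, via $(E,\theta)\mapsto L$ with $E\cong\overline{p}_{b*}L$ and $\theta$ induced by the tautological section, to line bundles $L$ on $\ts_b$. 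The traceless condition for $SL(n,\C)$ is automatic from the shape of the characteristic polynomial, while fixing $\det E$ translates, through $\det(\overline{p}_{b*}L)\cong\Nm(L)\otimes\det(\overline{p}_{b*}\cO_{\ts_b})$, into fixing $\Nm(L)\in\Jac(\sm)$ --- the usual reduction of an $SL(n)$-Hitchin fibre to a torsor over $\Prym(\ts_b,\sm)$.

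The new ingredient is the framing. Since $b\in B^{ur}$, the residue $\Res_x(\theta)$ has $n$ distinct eigenvalues at each $x\in D$, and the fibre splits as $E_x\cong\bigoplus_{y\in\overline{p}_b^{-1}(x)}L_y$ into the corresponding eigenlines. A diagonal framing is therefore exactly a choice of nonzero vector in each $L_y$, i.e. a trivialization of $L$ at every point of $R$; passing to the \emph{unordered} moduli space $\cM^\Delta(n,D)$ discards precisely the ordering of the sheets over each $x$, so that an unordered diagonally framed Higgs bundle is canonically a line bundle on $\ts_b$ rigidified along $R$. By the classical theory of generalized Jacobians (Serre), framed line bundles on $\ts_b$ with reduced modulus $R$, taken modulo the overall scalar automorphism, form the semi-abelian variety $\Jac(\ts_b^\circ)$, the generalized Jacobian of the punctured curve $\ts_b^\circ=\ts_b\setminus R$. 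Pushing a framing forward along $\overline{p}_b$ equips $\Nm(L)$ with a framing along $D$ and defines the punctured norm map $\Nm^\circ\colon\Jac(\ts_b^\circ)\to\Jac(\sm^\circ)$; the $SL(n)$ condition together with the framing forces this framed norm to be trivial, placing $(L,\text{framing})$ in $\ker\Nm^\circ$. The framing moreover supplies a canonical base point, rigidifying what in the unframed case was only a torsor, so that $h_\Delta^{-1}(b)$ is identified with the identity component $\Prym(\ts_b^\circ,\sm^\circ)$ itself, canonically and as semi-abelian varieties.

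The hard part will be to make this identification canonical, algebraic, and compatible with variation of $b$, rather than a mere bijection on points: I must check that the matching of diagonal framings with trivializations of $L$ along $R$ works in families over $B^{ur}$, that the overall-scalar and fixed-determinant normalizations are bookkept consistently on both sides of the norm map (including the $\det=1$ constraint on the framing for $SL(n)$), and that the group structures, not just the underlying sets, agree. The conceptual point is that framings are what ``open up'' the compact spectral curve: the correspondence trades the compact Prym for the generalized Jacobian of the punctured curve, and proving that this is an isomorphism of semi-abelian varieties --- respecting the extension of the abelian part by the torus --- is where the real work lies.

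Granting the correspondence, the Hodge-theoretic statement is immediate. As $\Prym(\ts_b^\circ,\sm^\circ)$ is a semi-abelian variety, its first homology carries a weight filtration in which $\mathrm{gr}^W_{-2}H_1$ comes from the toric part (pure of type $(-1,-1)$) and $\mathrm{gr}^W_{-1}H_1$ from the abelian quotient (pure of type $\{(-1,0),(0,-1)\}$); by the dictionary of Appendix A this is exactly a $\Z$-mixed Hodge structure of type $\{(-1,-1),(-1,0),(0,-1)\}$.
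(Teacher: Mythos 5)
Your proposal is correct and takes essentially the same route as the paper's proof: the BNR correspondence, the matching (using distinct eigenvalues over $D$ for $b\in B^{ur}$) of unordered diagonal framings with trivializations of $L$ along $\overline{p}_b^{-1}(D)$, the generalized-Jacobian parametrization of framed line bundles (Proposition \ref{framedlinebundles}, which the paper proves via the exponential sequence for $\cO^*(-D)$ rather than citing Serre), and the framed norm map cutting out $\Prym(\ts_b^\circ,\sm^\circ)=\ker(\Nm^\circ)^0$. The only divergences are minor: for the Hodge-theoretic sentence you invoke the semi-abelian/mixed-Hodge dictionary of Appendix \ref{appendix} abstractly, while the paper (Corollary \ref{Hodge structure of Prym}) realizes the structure concretely as $\ker\bigl(H_1(\Nm^\circ)\colon H_1(\ts_b^\circ,\Z)\to H_1(\sm^\circ,\Z)\bigr)$ for reuse in the Calabi--Yau comparison, and your aside that the framing ``supplies a canonical base point'' is not the actual mechanism for canonicity (the framing does not affect the norm/determinant normalization of the line bundle, which both you and the paper handle by working with the degree-zero convention).
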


On the Calabi-Yau side, we construct a family of Calabi-Yau threefolds $\pi:\cX \to B$ by using the elementary modification technique in \cite{smith2015quiver}. To produce the relevant Calabi-Yau integrable systems, we should restrict the family $\pi:\cX \to B$ to $B^{ur}$, denoted by $\pi^{ur}:\cX^{ur} \to B^{ur}$, whose fiber is smooth and its third homology admits a $\Z$-mixed Hodge structures of type $\{(-1,-1),(-1,0),(0,-1)\}$ up to Tate twist. Now, by taking fiberwise intermediate Jacobians, we obtain a family of semi-abelian varieties $\pi^{ur}:\cJ(\cX^{ur}/B^{ur}) \to B^{ur}$. The local period map induces an integrable system structure of this family.

The main result of the paper is to establish an isomorphism between the two semi-polarized integrable systems: 
\begin{theorem}(Theorem \ref{thm:main meromorphic DDP}) \label{thm:Main theorem}
There is an isomorphism of smooth semi-polarized integrable systems
\begin{equation}
\begin{tikzcd}
     \mathcal{J}(\mathcal{X}^{ur}/B^{ur}) \arrow[rr, "\cong"]  \arrow[rd, "\pi^{ur}"]  &&  
     \cM^\Delta(n,D)^{ur}
      \arrow[ld, "h_{\Delta}^{ur}"] \\
    & B^{ur}
\end{tikzcd}
\end{equation}
\end{theorem}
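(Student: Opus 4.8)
The plan is to prove Theorem \ref{thm:Main theorem} by establishing a fiberwise isomorphism of semi-abelian varieties that matches the two Seiberg–Witten differentials and respects the polarization data, then promoting this to an isomorphism of the variations of mixed Hodge structure over $B^{ur}$. The decisive tool is that both sides are already identified with the \emph{same} type of linear-algebraic data: by Proposition \ref{spectralcorrespondence} the Hitchin fiber $h_\Delta^{-1}(b)$ is canonically the Prym semi-abelian variety $\Prym(\ts_b^\circ, \sm^\circ)$, whose first homology carries a $\Z$-mixed Hodge structure of type $\{(-1,-1),(-1,0),(0,-1)\}$; and on the Calabi-Yau side the intermediate Jacobian $\cJ(\cX_b^{ur})$ is built from $H_3(\cX_b^{ur})$ carrying the same type of mixed Hodge structure up to Tate twist. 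So at the level of a single fiber the theorem reduces to producing a Hodge isometry between these two mixed Hodge structures.

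\emph{First} I would make the geometric link between the spectral cover $\ts_b$ and the Calabi-Yau threefold $\cX_b^{ur}$ explicit. The family $\pi:\cX\to B$ is constructed by elementary modification from the spectral data following \cite{smith2015quiver}, so each $\cX_b^{ur}$ should fiber (or at least admit a cycle-theoretic correspondence) over the curve $\sm$ with the spectral curve $\ts_b$ appearing as a distinguished locus; concretely, the relevant vanishing cycles of the threefold are expected to be supported over $\ts_b$. The goal of this step is a canonical correspondence inducing an isomorphism
\begin{equation*}
    H_3(\cX_b^{ur}, \Z) \cong H_1(\ts_b^\circ, \Z)^{\mathrm{Prym}}
\end{equation*}
of $\Z$-mixed Hodge structures after the appropriate Tate twist, where the right-hand side denotes the Prym-invariant sublattice. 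I would build this by analyzing the mixed Hodge structure on $H_3$ via the weight and Hodge filtrations coming from the open/singular geometry of $\cX_b^{ur}$, identifying the weight-one quotient with the abelian part (the compact Prym of $\ts_b$) and the weight-zero piece with the affine torus part (governed by the puncture divisor $\overline{p}_b^{-1}(D)$). The residues along $D$ on the Hitchin side and the polar/boundary contributions on the Calabi-Yau side must be matched exactly, since these control the affine torus factor of both semi-abelian varieties.

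\emph{Next}, having the fiberwise isomorphism, I would verify compatibility with the two defining pieces of the semi-polarized integrable system structure: the polarization (equivalently, the semi-polarization pairing on the mixed Hodge structure, as developed in Section 2 and Appendix A) and the abstract Seiberg–Witten differential. The Seiberg–Witten differential on the Hitchin side is the restriction of the tautological one-form on the spectral cover, while on the Calabi-Yau side it is the period of the holomorphic three-form; matching these pins down the isomorphism as one of integrable systems, not merely of the underlying torus bundles. \emph{Finally}, I would check that the fiberwise isomorphism varies holomorphically and is compatible with the Gauss–Manin connections, so that it globalizes to an isomorphism of variations of mixed Hodge structure over all of $B^{ur}$; this is where the local period map structure invoked for $\cJ(\cX^{ur}/B^{ur})$ enters, matching it with the period map of the Prym fibration.

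\textbf{The main obstacle} I anticipate is the first step: constructing the correspondence identifying $H_3$ of the non-compact threefold with the Prym homology of the spectral cover, \emph{with the mixed Hodge structures intact}. In the compact ADE case of \cite{diaconescu2006intermediate} the third cohomology was miraculously pure of weight one, so only a weight-one (abelian variety) comparison was needed; here the whole point is that the Hodge structure is genuinely mixed of type $\{(-1,-1),(-1,0),(0,-1)\}$, so I must track the weight-zero graded piece (the affine torus coming from the punctures/poles) and prove that the elementary modification produces exactly the extension class matching the Prym extension $1\to (\Gm)^r \to \Prym(\ts_b^\circ,\sm^\circ)\to \Prym(\ts_b,\sm)\to 1$. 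Verifying that the extension data agree — equivalently, that the boundary/residue maps on both sides coincide under the correspondence — is the technical heart of the argument and the place where the noncompact geometry genuinely differs from the DDP setting.
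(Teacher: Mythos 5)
Your overall architecture matches the paper's: a fiberwise isomorphism of $\Z$-mixed Hodge structures between $H_3(X_b,\Z)_{\textrm{tf}}(-1)$ and $H_1(\Prym(\ts_b^\circ,\sm^\circ),\Z)$, globalized over $B^{ur}$, followed by matching the abstract Seiberg--Witten differentials. But the proposal has a genuine gap precisely at the step you flag as the "main obstacle": you never supply the mechanism that produces the mixed Hodge structure comparison, and the route you sketch (match $\Gr^W_{-1}$ with the compact Prym, match $\Gr^W_{-2}$ with the torus, then verify the extension classes agree) is both harder than necessary and left entirely open. The paper never has to compare extension classes at all. Its proof of Theorem \ref{localDDP} runs through three concrete reductions: (i) the Leray spectral sequence for $\pi_b^\circ:X_b^\circ\to\sm^\circ$ degenerates because the fibers are deformations of the $A_{n-1}$-singularity surface, so $R^1\pi^\circ_{b!}\Z=0$ and $H^3_c(X_b^\circ,\Z)\cong H^1_c(\sm^\circ, R^2\pi^\circ_{b!}\Z)$; (ii) the identification of the family $\cX^\circ$ with the one glued from Slodowy slices (Lemma \ref{gluingmodel}) yields an isomorphism of constructible sheaves $R^2\pi^\circ_{b!}\Z\cong(\widetilde{p}^\circ_{b*}\Lambda_{SL(n)})^W$, where $\widetilde{p}_b$ is the cameral cover; (iii) this $W$-invariant pushforward is identified with the Prym sheaf $\cK_b|_{\sm^\circ}$ (Lemma \ref{lem:spectraltocameral}), whose $H^1_c$ is $H_{\Delta,SL(n),b}$ by Lemma \ref{rem:Prym sheaf}. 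A single sheaf-level isomorphism over $\sm^\circ$ thus carries the weight filtration, the Hodge filtration, and the extension data simultaneously, and because it is constructed relatively over $\sm^\circ\times B^{ur}$ the globalization and Gauss--Manin compatibility you propose to check by hand are automatic.

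Two further points. First, your plan omits the cameral cover entirely, yet it is the indispensable intermediary on both fronts: the Slodowy-slice theory computes $R^2\pi^\circ_{b!}\Z$ in terms of the cameral, not the spectral, cover, and only afterwards does one pass to $\cK_b$; likewise the Seiberg--Witten comparison is done by lifting the holomorphic $3$-form through the simultaneous resolution $\widetilde{S}\to S$ glued over $\sm^\circ$, identifying both differentials with the tautological section on the cameral side --- "matching periods" is not by itself an argument. Second, your first step asserts that the relevant vanishing cycles are "supported over $\ts_b$" and hopes for a cycle-theoretic correspondence with the Prym-invariant part of $H_1(\ts_b^\circ)$; this is the right intuition, but turning it into an isomorphism of mixed Hodge structures is exactly what steps (i)--(iii) accomplish, and without them your proposal reduces the theorem to an unproven statement of comparable difficulty.
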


    The idea is to compare the variations of $\Z$-mixed Hodge structures associated to the two semi-polarized integrable systems, by using the gluing techniques in \cite{diaconescu2006intermediate}, \cite{beck2017hitchin}. To complete the proof, we check that the comparison map intertwines the abstract Seiberg-Witten differentials on each side.
    
    \subsection{Related work}
    
        The ideas of the spectral correspondence for unordered diagonally framed Higgs bundles and the infinitesimal study of their moduli spaces are drawn from \cite{biswas2018symplectic}. We follow their approach closely in Section \ref{sec: deformation theory}. However, we provide an improvement of their result in order to show that $\overline{\cM}^{\Delta}(n,D)^{ur}$ and $\cM^{\Delta}(n,D)^{ur}$ are symplectic which was not proved before. We also focus more on the Hodge structures of the relevant Hitchin fibers to prove Theorem \ref{thm:Main theorem}.
        
        A general construction of the moduli space of unordered diagonally framed Higgs bundles $\cM^{\Delta}(n,D)$ comes from symplectic implosion \cite{SymplecticImplosion} associated to the level group action on $\cM_F(n,D)$, viewed as the cotangent bundle of the moduli of framed bundles \cite{markman1994spectral}. One can obtain the Hitchin fibration over the full base $B$, but it is a stratified space and very singular which makes it difficult to control. Indeed, as we only need the smooth part for our main result, we focus on Higgs fields that are diagonalizable over $D$ throughout the paper. 
    
    Kontsevich-Soibelman proposed a different construction of the relevant Calabi-Yau integrable system as an affine conic bundle over a holomorphic symplectic surface containing a given spectral curve (see \cite{kontsevich2014wall}). This can be done by blowing up intersections of spectral curves and the preimage of the divisor $D$ in the total space of the twisted cotangent bundle $K_\sm(D)$. After removing the proper transform of the preimage, one gets the desired symplectic holomorphic surface. This model is birationally equivalent to the one we introduce in Section 4. 
    
    After the completion and submission of this work, we were informed by one of the referees that our construction of Calabi-Yau threefolds had already appeared in the work of Abrikosov  \cite[Section 6.2]{abrikosov2018potentials} (and also considered in \cite{smith2020floer}) of which we were not aware since they are constructed for different purposes. While both of our constructions are written in a slightly different way, they are the same natural generalization of Smith's elementary modification construction.

\subsection{Plan}
We first recollect the basics of integrable systems and introduce the notion of a semi-polarized integrable system in Section 2. In Section 3, we study the integrable system structure of the moduli space of unordered diagonally framed Higgs bundle. Also, we give both the spectral and cameral descriptions for completeness. In Section 4, we construct the semi-polarized Calabi-Yau integrable systems by using the technique of elementary modification. It is then followed by a Hodge theoretic computation. Finally, in Section 5, we give a proof of Theorem \ref{thm:Main theorem}.

\subsection{Notation}
\begin{itemize}
    \item $\Sigma$ - a projective smooth curve of genus $g\geq 1$. 
    \item $D$ - an effective divisor of $d$ reduced points for $d\geq 2$.
    \item $\Sigma^{\circ}$ - the complement of the divisor $D$ in $\Sigma$. 
    \item $\cM(n,D)$ - the moduli space of $K_{\sm}(D)$-twisted $SL(n,\C)$-Higgs bundles. All the Higgs bundles in the following moduli spaces are $K_\sm(D)$-twisted with structure group $SL(n,\C)$. 
    \item $\cM_F(n,D)$ - the moduli space of framed Higgs bundles.
    \item $\overline{\cM}^{\Delta}(n,D)$ - the moduli space of diagonally framed Higgs bundles.
    \item $\cM^{\Delta}(n,D)$ - the moduli space of unordered diagonally framed Higgs bundles. 
    \item $B=\oplus_{i=2}^n H^0(\Sigma, K_\Sigma(D)^{\otimes i})$ - the Hitchin base. 
    \item $B^{ur}\subset B$ - the subset consists of smooth cameral curves which are unramified over $D$ and have simple ramifications. Throughout the paper, we will always assume an element $b\in B$ is sitting in $B^{ur}.$
    \item $\overline{p}_b: \ts_b \to \Sigma$ - the spectral cover for $b\in B$.
    \item $\widetilde{p}_b: \widetilde{\sm}_b\to \sm$ - the cameral cover for $b\in B.$
    
\end{itemize}

\section{Semi-polarized integrable systems}

In this section, we recall the notion of a \textit{semi-polarized integrable system}, originally introduced in \cite{kontsevich2014wall}. This is a non-proper generalization of the notion of algebraic integrable system  \cite{hitchin1987stable} which provides a new way to view integrable systems in the Poisson setting. Similarly to an algebraic integrable system which can be associated with variations of polarized weight one Hodge structures, we also have a Hodge-theoretic description of semi-polarized integrable systems. To make the paper self-contained, we shall begin reviewing basics of algebraic integrable systems by following \cite{beck2017hitchin}\cite{beck2019calabi}.

\subsection{Integrable systems and variations of Hodge structures}\label{sec:2.1pure case}

\begin{definition}
    Let $(M^{2n},\omega)$ be a holomorphic symplectic manifold of dimension $2n$ and $B$ be a connected complex manifold of dimension $n$. A holomorphic map $\pi:M \to B$ is called an \textit{algebraic integrable system} if it satisfies the following conditions.
    \begin{enumerate}
        \item $\pi$ is proper and surjective;
        \item there exists a Zariski open dense subset $B^\circ \subset B$ such that the restriction 
        \begin{equation*}
            \pi^\circ:=\pi|_{M^\circ}:M^\circ \to B^\circ, \qquad M^\circ:=\pi^{-1}(B^\circ)
        \end{equation*} has smooth connected Lagrangian fibers and admits a relative polarization i.e. a relatively ample line bundle.
    \end{enumerate}
    In particular, if $B^\circ =B$, then $(M,\omega, \pi)$ is called a \textit{smooth} algebraic integrable system. 
\end{definition}

The second condition that a generic fiber is Lagrangian puts rather restrictive constraints on the geometry of the fiber. To see this, first consider $\ker(d\pi^\circ)$, the sheaf of vector fields on $M^\circ$ which are tangent to the fibers of $\pi^\circ$. Since the fibers of $\pi^\circ$ are Lagrangians, the holomorphic symplectic form $\omega$ induces an isomorphism $\ker(d\pi^\circ) \cong (\pi^\circ)^*T^\vee B^\circ$ via $v \mapsto \omega(v,-)$. By taking pushforward to $B^\circ$, we have an isomorphism of coherent sheaves $\pi^\circ_*\ker(d\pi^\circ) \cong \pi^\circ_*(\pi^\circ)^*T^\vee B^\circ$. In fact, one can apply the projection formula and see $ \pi^\circ_*(\pi^\circ)^*T^\vee B^\circ \cong T^\vee B^\circ$ because the fibers of $\pi^\circ$ are connected. Thus, the sheaf $\pi^\circ_*\ker(d\pi^\circ)$ is isomorphic to $T^\vee B^\circ$, hence locally free. We denote it by $\cV$ and call it a vertical bundle of $\pi^\circ$.

Next, choose a sufficiently small open subset $U \subset B^\circ$ and two local sections $u,v:U \to \mathcal{V}$ such that they are Hamiltonian vector fields $u=X_{{(\pi^\circ)}^*f}, v=X_{{(\pi^\circ)}^*g}$ for the functions $f, g:U \to \C$. As the fibers of $\pi^\circ$ are Lagrangians, we have $[u,v]=X_{\omega(u,v)}=0$. It implies that the Lie algebra $(\cV, [-,-])$ is abelian so that one can define a group action of $\cV$ on $M^\circ$ via the fiberwise exponential map. In other words, the flows of the vector fields along the fibers of $\pi^\circ$ corresponding to the sections of $\cV$ act on $M^\circ$ while preserving the fibers of $\pi^\circ$.

The submanifold
\begin{equation*}
    \Gamma=\{v \in \mathcal{V}|\, \exists \, x \in M^\circ \text{such that} \quad v \cdot x=x\}
\end{equation*}
forms a full lattice in each fiber and induces a family of abelian varieties $\mathcal{A}(\pi^\circ):=\mathcal{V}/\Gamma \to B^\circ$ which acts simply transitively on $\pi^\circ: M^\circ \to B^\circ$. Therefore, a generic fiber of $\pi:M \to B$ is non-canonically isomorphic to an abelian variety.

From now on, we will focus on smooth integrable systems ($B^\circ=B$). From the viewpoint of Hodge theory, a family of polarized abelian varieties can be obtained from a variation of weight 1 polarized $\Z$-Hodge structures $\mathsf{V}=(V_\Z, F^\bullet V_\cO, Q)$ over $B$ where $V_\cO:=V_\C \otimes \cO_{B}$ and $F^\bullet$ is the Hodge filtration. This is done by taking the relative Jacobian fibration so that we have the family
\begin{equation}
    p: \mathcal{J}(\mathsf{V}):=\Tot(V_\cO/(F^1V_\cO+V_\Z)) \to B
\end{equation}
whose vertical bundle is $\mathcal{V}:=V_\mathcal{O}/F^1V_\mathcal{O} \to B$.

A natural question is a condition for the family $p: \mathcal{J}(\mathsf{V}) \to B$ being an integrable system. In other words, we need a symplectic form on $\mathcal{J}(\mathsf{V})$ where fibers are connected Lagrangians. This can be achieved by the following theorem. 

\begin{theorem} \cite{beck2017hitchin}
\label{Thm: VHS and integrable system}
Let $\mathsf{V}=(V_\Z, F^{\bullet}V_\cO, Q)$ be a variation of weight 1 polarized $\Z$-Hodge structures over $B$ and $\nabla^{GM}$ be the Gauss-Manin connection on $V_\cO$.
Assume that there exists a global section $\lambda \in H^0(B, V_\mathcal{O})$ such that 
\begin{align*}
    \phi_\lambda:&TB \to F^1V_{\cO}\\
    & \mu \mapsto \nabla^{GM}_{\mu} \lambda
\end{align*}
is an isomorphism. Then the polarization $Q$ induces a canonical symplectic form $\omega_\lambda$ on $\mathcal{J}(\mathsf{V})$ such that the induced zero section becomes Lagrangian. Moreover, the symplectic form is independent of the polarization $Q$ up to symplectomorphisms.

\end{theorem}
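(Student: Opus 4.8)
The plan is to build $\omega_\lambda$ explicitly on the total space of the vertical bundle and then descend it to $\mathcal{J}(\mathsf{V})$. I would first record the two pieces of linear algebra supplied by the polarized weight $1$ structure. By the first Hodge--Riemann bilinear relation the subbundle $F^1V_\cO$ is isotropic for $Q$, and since $\operatorname{rank} F^1V_\cO = \tfrac12\operatorname{rank} V_\cO$ it is in fact Lagrangian; hence $Q$ induces an isomorphism $\mathcal{V}=V_\cO/F^1V_\cO \xrightarrow{\sim}(F^1V_\cO)^\vee$. Composing with the dual of the hypothesised isomorphism $\phi_\lambda\colon TB\xrightarrow{\sim}F^1V_\cO$ yields a canonical isomorphism of holomorphic bundles $\mathcal{V}\cong T^\vee B$, identifying $\Tot(\mathcal{V})$ with the cotangent bundle $T^*B$. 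Since $\mathcal{J}(\mathsf{V})=\Tot(\mathcal{V})/V_\Z$ is the quotient of $\Tot(\mathcal{V})$ by the fibrewise translation action of the flat lattice $V_\Z\hookrightarrow V_\cO\twoheadrightarrow\mathcal{V}$, it suffices to produce a $V_\Z$-invariant symplectic form on $\Tot(\mathcal{V})$.

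Next I would write down the tautological $1$-form. For $x=(b,[v])\in\Tot(\mathcal{V})$ and $\xi\in T_x\Tot(\mathcal{V})$ with image $\mu=p_*\xi\in T_bB$, set
\[
\alpha_x(\xi)=Q\bigl(\tilde v,\ \nabla^{GM}_\mu\lambda\bigr),
\]
where $\tilde v\in V_{\cO,b}$ is any lift of $[v]$. This is well defined because $\nabla^{GM}_\mu\lambda\in F^1V_\cO$ and $F^1V_\cO$ is $Q$-isotropic, so the lift ambiguity (an element of $F^1V_\cO$) pairs trivially. I then set $\omega_\lambda:=d\alpha$, which is closed by construction. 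The crucial point is descent: for a flat section $\gamma\in V_\Z$, using $\nabla^{GM}\gamma=0$ and $\nabla^{GM}Q=0$ to rewrite $Q(\gamma,\nabla^{GM}_\mu\lambda)=\mu\bigl(Q(\gamma,\lambda)\bigr)$, the translation $t_\gamma$ satisfies $t_\gamma^*\alpha-\alpha=p^*d\bigl(Q(\gamma,\lambda)\bigr)$, an exact form. Hence $t_\gamma^*\omega_\lambda=\omega_\lambda$ and $\omega_\lambda$ descends to $\mathcal{J}(\mathsf{V})$. This descent computation is the technical heart making the global form exist.

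To verify non-degeneracy and the Lagrangian conditions I would pass to adapted coordinates. Over a contractible $U\subset B$ with coordinates $t_1,\dots,t_n$, the sections $\sigma_k:=\nabla^{GM}_{\partial_{t_k}}\lambda$ frame $F^1V_\cO$ by hypothesis, and the fibrewise-linear functions $\zeta_k:=Q(\,\cdot\,,\sigma_k)$ are precisely the fibre coordinates on $\mathcal{V}\cong(F^1V_\cO)^\vee$ dual to them. In these coordinates $\alpha=\sum_k\zeta_k\,dt_k$ and $\omega_\lambda=\sum_k d\zeta_k\wedge dt_k$, the standard Darboux form, which is manifestly non-degenerate and realizes $\omega_\lambda$ as the canonical cotangent symplectic form under the identification above. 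The fibres $\{t=\mathrm{const}\}$ and the induced zero section $\{\zeta=0\}$ are then visibly isotropic, since $\alpha$ restricts to $0$ on each; being of dimension $n=\tfrac12\dim\mathcal{J}(\mathsf{V})$ they are Lagrangian. Equivalently, $s_0^*\alpha=0$ gives $s_0^*\omega_\lambda=0$ for the zero section $s_0$.

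Finally, for independence of the polarization up to symplectomorphism, I would compare the forms $\omega_Q$ and $\omega_{Q'}$ attached to two polarizations of the same $\mathsf{V}$, noting that the underlying complex manifold $\mathcal{J}(\mathsf{V})$ is insensitive to $Q$. Under the cotangent identification $\omega_Q$ always becomes the canonical $\omega_{\mathrm{can}}$ on $T^*B$, while the $Q$-dependence is absorbed into how the lattice $\Lambda_Q=(Q\circ\phi_\lambda)(V_\Z)$ sits inside the fibres of $T^\vee B$. I expect \emph{this} step to be the main obstacle: the naive fibrewise-linear bundle map equating the two lattices does not preserve $\omega_{\mathrm{can}}$, and it need not descend to the torus quotient. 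My proposed remedy is a Moser-type argument along the segment of polarizations $Q_s=sQ+(1-s)Q'$ (each a polarization, since $F^1V_\cO$-isotropy is linear and the Hodge--Riemann positivity is convex): the descended family $\omega_{Q_s}$ has $\tfrac{d}{ds}\omega_{Q_s}$ exact by the same flatness computation as above, so integrating the associated time-dependent vector field along the compact fibres produces a fibre-preserving symplectomorphism $(\mathcal{J}(\mathsf{V}),\omega_Q)\cong(\mathcal{J}(\mathsf{V}),\omega_{Q'})$. Making the non-compactness of the base harmless in this Moser step is the point I would treat most carefully.
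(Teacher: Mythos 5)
Your construction of $\omega_\lambda$ is correct and is essentially the paper's own argument (which is quoted from Beck): the isotropy of $F^1V_\cO$, the identification $\iota=\phi_\lambda^\vee\circ\psi_Q:\mathcal{V}\xrightarrow{\sim}T^\vee B$, the tautological primitive $\alpha$, and the descent computation $t_\gamma^*\alpha-\alpha=p^*d\bigl(Q(\gamma,\lambda)\bigr)$ are exactly the paper's statement that the flat lattice $V_\Z$ embeds in $T^\vee B$ as an exact (hence Lagrangian) multisection, so that the canonical cotangent form descends to $\cJ(\mathsf{V})\cong T^\vee B/\iota(V_\Z)$. The Darboux-coordinate verification of nondegeneracy and of the Lagrangian property of fibres and zero section is also standard and fine.

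The final step, however, has a genuine gap. First, the exactness of $\tfrac{d}{ds}\omega_{Q_s}=\omega_Q-\omega_{Q'}$ on $\cJ(\mathsf{V})$ does \emph{not} follow from ``the same flatness computation'': that computation shows each $\omega_{Q_s}$ descends, while the natural primitive $\alpha_Q-\alpha_{Q'}$ upstairs does not descend (its translation defect is $p^*d\bigl((Q-Q')(\gamma,\lambda)\bigr)\neq 0$), so exactness downstairs is a separate cohomological claim requiring its own argument. Second, and fatally, the Moser field need not be vertical, so compactness of the fibres does not rescue the flow. Concretely, take $B=\Delta$ the unit disk with coordinate $b$, the constant VHS $V_\Z=\Z e_1\oplus\Z e_2$, $F^1=\C(e_1+ie_2)$, $Q(e_1,e_2)=1$, and $\lambda=b(e_1+ie_2)$; then $\cJ(\mathsf{V})=\Delta\times E$ with $E=\C/(\Z+i\Z)$, $\omega_Q=i\,dw\wedge db$, and for $Q'=2Q$ one has $\omega_{Q'}=2i\,dw\wedge db$. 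Here $\omega_{Q'}-\omega_Q=d(-ib\,dw)$ is indeed exact, but solving $\iota_{X_s}\omega_{Q_s}=ib\,dw$ gives $X_s=-\tfrac{b}{1+s}\partial_b$: the Moser field is \emph{horizontal}, and its time-one flow $(b,w)\mapsto(b/2,w)$ pulls $\omega_{Q'}$ back to $\omega_Q$ but is only an open embedding of $\cJ(\mathsf{V})$ into itself, not a symplectomorphism. Worse, in this example no biholomorphic symplectomorphism $(\cJ(\mathsf{V}),\omega_Q)\cong(\cJ(\mathsf{V}),\omega_{Q'})$ exists at all: any biholomorphism of $\Delta\times E$ is fibre-preserving (holomorphic maps $E\to\Delta$ are constant), hence of the form $(b,w)\mapsto(\psi(b),\zeta w+t(b))$ with $|\zeta|=1$, and the condition $\Phi^*\omega_{Q'}=\omega_Q$ forces $|\psi'|\equiv\tfrac12$, impossible for $\psi\in\Aut(\Delta)$. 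So no Moser-type argument (nor any other) can prove the claim for an arbitrary pair of polarizations.

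What can be proved directly is the following restricted statement: if $Q'=Q(A\,\cdot,\cdot)$ for an automorphism $A$ of the \emph{integral} VHS (flat and preserving $V_\Z$; it then automatically preserves $F^1$, since $F^1$ is maximal isotropic for both forms), then $[v]\mapsto[Av]$ is a fibre-preserving biholomorphism $\hat A$ of $\cJ(\mathsf{V})$ with $\hat A^*\omega_Q=\omega_{Q'}$. For general polarizations $A=Q^{-1}Q'$ is only rational and one obtains an isogeny rather than an isomorphism, so the ``independence up to symplectomorphism'' in the statement has to be interpreted accordingly. Note that the paper itself does not prove this last assertion --- it is imported wholesale from \cite{beck2017hitchin} --- so the honest options are to quote Beck's precise formulation or to prove the integral-automorphism version above; your Moser route, as written, does not close the gap.
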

Consider the dual variation of $\Z$-Hodge structure of $\mathsf{V}$,  $\mathsf{V}^\vee=\Hom_{\mathsf{VHS}}(\mathsf{V},\Z_B)(-1)$ where $\Z_B$ is the constant variation of $\Z$-Hodge structure of weight 0 and $(-1)$ is the Tate twist\footnote{We follow the convention used in \cite{peters2008mixed}. Given a Hodge structure $V$ of weight $m$, its $k$-th Tate twist $V(k)$ has weight $m-2k$. } by $-1$ over $B$. Recall that $\Hom_{\mathsf{VHS}}(\mathsf{V}, \Z_B)$ is a variation of $\Z$-Hodge structure of weight $-1$ and we shift the weight to be $1$ by taking the Tate twist. The polarization $Q$ identifies $\mathcal{V}=  V_\cO/F^1V_\cO$ with $F^1V_\cO^\vee$. Consider the compositions
\begin{equation}
    \iota:\mathcal{V} \xrightarrow{\psi_Q} F^1V_\cO^\vee \xrightarrow{\phi_\lambda^\vee} T^\vee B.
\end{equation}
where $\psi_Q$ is the identification induced by the polarization $Q$ and $\phi_\lambda^\vee$ is dual of $\phi_\lambda$. Then the lattice $V_\Z$ in $\mathcal{V}$ embeds into $T^\vee B$ as a Lagrangian submanifold. Therefore, we obtain a symplectic structure from the canonical one on $T^\vee B$ by descending to
$\cJ(V_\cO) \cong T^\vee B/\iota(V_\Z)$. We call such $\lambda$ an \textit{abstract Seiberg-Witten differential} \cite{beck2017hitchin}\cite{Donagi1997SWintegrablesystem}.

\subsection{Semi-polarized integrable systems and variations of mixed Hodge structures}\label{sec: semipolarized IS}

One can generalize the notion of an algebraic integrable system by allowing fibers to be non-proper. This is the main object of our study, first introduced in \cite{kontsevich2014wall}. We recall the definition in a form convenient for our story.

\begin{definition}
    Let $(M^{2n+2k},\omega)$ be a holomorphic symplectic manifold of dimension $2n+2k$ and $B$ be a connected complex manifold of dimension $n+k$. A holomorphic map $\pi:M \to B$ is called a \textit{semi-polarized integrable system} if it satisfies the following conditions.
    \begin{enumerate}
        \item $\pi$ is flat and surjective;
        \item there exists a Zariski open dense subset $B^\circ \subset B$ such that the restriction 
        \begin{equation*}
            \pi^\circ:=\pi|_{M^\circ}:M^\circ \to B^\circ, \qquad M^\circ:=\pi^{-1}(B^\circ)
        \end{equation*} has smooth connected Lagrangian fibers;
        \item each fiber of $\pi^\circ$ is a semi-abelian variety which is an extension of a $n$-dimensional polarized abelian variety by a $k$-dimensional affine torus.
        
    \end{enumerate}
    In particular, if $B^\circ =B$, then $(M,\omega, \pi)$ is called a \textit{smooth} semi-polarized integrable system. 
\end{definition}
Similar to algebraic integrable systems, the main example comes from a variation of torsion-free $\Z$-mixed Hodge structures (see \cite[Section 14.4]{peters2008mixed} for its definition). Let $\mathsf{V}=(V_\Z, W_\bullet V_\Z, F^\bullet V_\cO)$ be a variation of $\Z$-mixed Hodge structures of type $\{(-1,-1), (-1,0), (0,-1)\}$ over $B$ where $V_\cO:=V_\C \otimes \cO_B$ and $\Gr^W_{-1}V_\C$ is polarizable. In other words, we have 
\begin{itemize}
    \item $0=W_{-3} \subset W_{-2} \subset W_{-1}=V_\Z$
    \item $0= F^{1} \subset F^{0} \subset F^{-1}=V_\cO$
\end{itemize}
and can choose a relative polarization on $\Gr^W_{-1}V_\cO$. Throughout this paper, we choose a semi-polarization on $V_\Z$, a degenerate bilinear form $Q:V_\Z \times V_\Z \to \Z_B$ which yields the relative polarization on $\Gr^W_{-1}V_\cO$. We call it a variation of semi-polarized $\Z$-mixed Hodge structures. Moreover, one can obtain a semi-abelian variety from a $\Z$-mixed Hodge structure of type $\{(-1,-1), (-1,0), (0,-1)\}$ by taking the Jacobian (see Appendix).  Therefore, we have a family of semi-abelian varieties by taking the relative Jacobian fibration
\begin{equation}
    p: \mathcal{J}(\mathsf{V}):=\Tot(V_\cO/(F^0V_\cO+V_\Z)) \to B
\end{equation}
with its compact quotient  $p_{\text{cpt}}:\mathcal{J}_{\text{cpt}}(\mathsf{V}):=\Tot(W_{-1}V_\cO/(W_{-1}V_\cO \cap F^0V_\cO+V_\Z)) \to B$.

To define an abstract Seiberg-Witten differential, we consider the dual variation of $\Z$-mixed Hodge structures $\mathsf{V}^\vee=(V^\vee_\Z, W_\bullet V_\Z^\vee, F^\bullet V_\cO^\vee):=\Hom_{\text{VMHS}}(\mathsf{V}, \Z_B)$ of  $\mathsf{V}$. Note that we don't take a Tate twist so that it is of type $\{(0,1),(1,0),(1,1)\}$. Unlike the pure case (Section \ref{sec:2.1pure case}), the abstract Seiberg-Witten differential is defined as a global section of the dual vector bundle $V_\cO^\vee$.

\begin{definition}
    Let $\mathsf{V}=(V_\Z, W_\bullet V_\Z, F^\bullet V_\cO, Q)$ be a variation of semi-polarized $\Z$-mixed Hodge structures of type $\{(-1,-1), (-1,0), (0,-1)\}$ over $B$, and $\nabla^{GM}$ be the Gauss-Manin connection on $V_\cO$. We define an abstract Seiberg-Witten differential as a global section of the dual bundle $V^\vee_\cO$, $\lambda \in H^0(B, V^\vee_{\cO})$, such that the following morphism
    \begin{equation} \label{def:SW condition}
        \begin{aligned}
    \phi_\lambda : &TB \to F^{1}V_\cO^\vee\\
    & \mu \mapsto \nabla^{GM}_{\mu}\lambda
\end{aligned}
    \end{equation}
is an isomorphism.
\end{definition}

It is clear that the vertical bundle $\mathcal{V}$ of $\mathcal{J}(\mathsf{V}) \to B$ can be identified with $(F^1V_\cO^\vee)^\vee$ via the canonical non-degenerate pairing,  $V_\cO/F^0V_\cO \otimes F^1V_\cO^\vee \to \cO_B$. Consider the composition 
\begin{equation*}
    \iota: \mathcal{V} \to (F^{1}V^\vee_\mathcal{O})^\vee \xrightarrow{\phi_\lambda^\vee} T^\vee B
\end{equation*}
under which the lattice $V_\Z \subset \mathcal{V}$ embeds into  $T^\vee B$ as a Lagrangian submanifold. Similar to Theorem \ref{Thm: VHS and integrable system}, we obtain a symplectic form from the canonical one on $T^\vee B$ with Lagrangian condition on a generic fiber.
Moreover, the total space $\cJ(\mathsf{V})$ has a canonical Possion structure associated to the given symplectic form. As the action of the affine torus on $\cJ(\mathsf{V})$ is Hamiltonian, free and proper, the quotient space $\cJ_\textrm{cpt}(\mathsf{V})$ is a Poisson manifold \cite[Section 4.1.2]{kontsevich2014wall}. Thus, $\cJ_\textrm{cpt}(\mathsf{V})$ has a Poisson integrable system structure whose symplectic leaves are locally parametrized by $\phi_\lambda^{-1}(\Gr^W_2 V_\cO^\vee \cap F^1V_\cO^\vee$) (see \cite[Section 4.2]{kontsevich2014wall} for more details). This proves the following proposition.

\begin{proposition}\label{SWdifferentials and S-PIS}
    Let $\mathsf{V}=(V_\Z, W_\bullet V_\Z, F^\bullet V_\cO, Q)$ be a variation of semi-polarized $\Z$-mixed Hodge structures of type $\{(-1,-1), (-1,0), (0,-1)\}$ over $B$ and \\  $\lambda \in H^0(B, V^\vee_\cO)$ be the abstract Seiberg-Witten differential. Then, the relative Jacobian fibration
    \begin{equation}
    p: \mathcal{J}(\mathsf{V}):=\Tot(V_\cO/(F^0V_\cO+V_\Z)) \to B
\end{equation}
    forms a semi-polarized integrable system. In particular, the compact quotient $\cJ_\textrm{cpt}(\mathsf{V}) \to B$ admits a Poisson integrable system structure.  
\end{proposition}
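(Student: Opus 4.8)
The plan is to assemble the structures built in the preceding discussion into a verification of the three axioms of a semi-polarized integrable system, and then to import Poisson reduction for the final clause. Write $\dim B = n+k$. First I would record the dimension count: for a fiberwise $\Z$-mixed Hodge structure of type $\{(-1,-1),(-1,0),(0,-1)\}$ the graded piece $\Gr^W_{-2}V_\Z$ (type $(-1,-1)$) has rank $k$ and $\Gr^W_{-1}V_\Z$ (types $(-1,0),(0,-1)$) has rank $2n$, so $\rank V_\Z = 2n+k$ and $\dim F^0 V_\cO = n$. Hence the fibers of $p$ have dimension $\dim V_\cO - \dim F^0 V_\cO = n+k$ and $\cJ(\mathsf{V})$ has dimension $2(n+k)$, as the definition demands. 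Note that the fiber lattice $(V_\Z)_b$ of rank $2n+k$ is \emph{not} full in the $(n+k)$-dimensional space $\cV_b$, which is precisely why the fibers will be non-proper.

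The heart of the argument is the symplectic form together with the Lagrangian property of the fibers. Using the Seiberg-Witten isomorphism $\phi_\lambda : TB \xrightarrow{\sim} F^1 V_\cO^\vee$ and the non-degenerate pairing $\cV = V_\cO/F^0 V_\cO \otimes F^1 V_\cO^\vee \to \cO_B$, the composite $\iota : \cV \to (F^1 V_\cO^\vee)^\vee \xrightarrow{\phi_\lambda^\vee} T^\vee B$ is a bundle isomorphism. The key computation is that $\iota$ sends the lattice $V_\Z \subset \cV$ to a Lagrangian submanifold of $(T^\vee B, \omega_{\mathrm{can}})$. For this I would take a local $\nabla^{GM}$-flat section $v$ of $V_\Z$ and compute, for $\mu \in TB$,
\[
\iota(v)(\mu) = \langle \nabla^{GM}_\mu \lambda, v \rangle = \mu\langle \lambda, v\rangle - \langle \lambda, \nabla^{GM}_\mu v\rangle = d\langle \lambda, v\rangle(\mu),
\]
using flatness of the pairing $V_\cO^\vee \otimes V_\cO \to \cO_B$ and $\nabla^{GM}_\mu v = 0$. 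Thus each local lattice section maps to an exact, hence closed, $1$-form, so $\iota(V_\Z)$ is Lagrangian. Since translations of $T^\vee B$ by closed $1$-forms preserve $\omega_{\mathrm{can}}$, the canonical form descends to $\cJ(\mathsf{V}) \cong T^\vee B / \iota(V_\Z)$, yielding the holomorphic symplectic form $\omega_\lambda$; each fiber of $p$ is the image of a cotangent fibre $T^\vee_b B$, which is Lagrangian and connected, and this survives the quotient.

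Next I would identify each fiber $(V_\cO)_b/(F^0(V_\cO)_b + (V_\Z)_b)$ with the Jacobian of the fiberwise mixed Hodge structure, which by Appendix A is a semi-abelian variety: the weight filtration $0 \subset W_{-2} \subset W_{-1}=V_\Z$ exhibits it as an extension of the $n$-dimensional polarized abelian variety $\cJ(\Gr^W_{-1})$ (polarized by $Q$) by the $k$-dimensional affine torus coming from $\Gr^W_{-2}$, with $F^0 \cap W_{-2} = 0$ guaranteeing the torus factor. Since $\cJ(\mathsf{V}) \to B$ is a smooth family of such groups, $p$ is smooth, in particular flat and surjective, and taking $B^\circ = B$ confirms all three axioms, so $p$ is a smooth semi-polarized integrable system.

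For the final clause I would invoke the Poisson reduction of Kontsevich-Soibelman \cite[Section 4.1.2]{kontsevich2014wall}: the affine torus associated to $\Gr^W_{-2}$ acts fiberwise on $\cJ(\mathsf{V})$, and this action is Hamiltonian, free and proper, so the quotient $\cJ_{\mathrm{cpt}}(\mathsf{V})$ inherits a Poisson structure whose symplectic leaves are locally cut out by $\phi_\lambda^{-1}(\Gr^W_2 V_\cO^\vee \cap F^1 V_\cO^\vee)$ and carry the induced algebraic integrable system. I expect the main obstacle to be the mixed Hodge-theoretic bookkeeping: pinning down the pairing, the interplay of the weight and Hodge filtrations, and the (untwisted) duality conventions so that $\iota$ is genuinely an isomorphism of the stated bundles and the lattice lands as claimed. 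Once these identifications are fixed the closedness computation above is clean, and the Poisson reduction is essentially cited rather than reproved.
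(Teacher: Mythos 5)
Your proposal is correct and follows essentially the same route as the paper's own argument: identifying the vertical bundle with $(F^1V_\cO^\vee)^\vee$ via the canonical pairing, embedding the lattice into $T^\vee B$ through $\phi_\lambda^\vee$ as a Lagrangian, descending the canonical symplectic form, and citing Kontsevich--Soibelman for the Poisson structure on the compact quotient. Your explicit computation that flat lattice sections map to the exact $1$-forms $d\langle\lambda,v\rangle$, and your dimension bookkeeping, merely fill in details the paper leaves implicit.
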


\begin{rem}

The reason we take a global section of the dual vector bundle in the definition of abstract Seiberg-Witten differential is that, unlike the pure case, the semi-polarization $Q$ does not induce the canonical identification between $\mathsf{V}$ and $\mathsf{V}^\vee$. Moreover, this is also motivated by the geometric examples we will consider where $V_\Z$ and $V_\Z^\vee$ are torsion-free integral homology and cohomology of a non-singular quasi-projective variety, respectively. 
\end{rem}

\begin{rem}
In \cite{kontsevich2014wall}, Kontsevich and Soibelman introduce the notion of a central charge $Z \in H^0(B, V_\cO^\vee)$ which induces an local embedding of the base into $V_\cO^\vee$. It is equivalent to the data of an abstract Seiberg-Witten differential which suits our story better. 
\end{rem}

\section{Moduli space of diagonally framed Higgs bundles}

In this section, we will study the moduli space of (unordered) diagonally framed Higgs bundles and the associated Hitchin map as introduced in \cite{biswas2018symplectic}. In particular, we will give the spectral and Hodge theoretic description of the generic Hitchin fiber.
Then we prove that it is a semi-polarized integrable system in two different ways: using deformation theory and using abstract Seiberg-Witten differentials. As mentioned in Section 1, parts of this section will follow the approach of \cite{biswas2018symplectic}. For basic properties of Hitchin systems and spectral covers, we refer to \cite{donagi1996spectral}. 

\subsection{The moduli space of (unordered) diagonally framed Higgs bundles}
We fix $\sm $ to be a smooth projective curve of genus $g$, $D$ a reduced divisor on $\sm$ and $\sm^\circ : = \sm\setminus D.$ 
\begin{definition}
A framed $SL(n,\C)$-Higgs bundle on $\sm$ is a triple $(E,\theta, \delta) $, where $E$ is a vector bundle of rank $n$ with trivial determinant, $\delta: E_D\xrightarrow{\sim} \oplus _{i=1}^n\cO_D $ is an isomorphism, i.e. a framing at $D$, and $\theta\in \Gamma( \Sigma, \End_0(E)\otimes K_\Sigma(D) )$ is a traceless Higgs field. 
\end{definition}
A morphism between framed Higgs bundles $(E, \theta, \delta)$ and $(E', \theta', \delta')$ is a map $f:E\to E'$ such that $\delta \circ f|_D = \delta' $ and $\theta' \circ f = (f\otimes Id_{K_{\sm}(D)})\circ \theta$. 
\begin{rem}
A framed $GL(n,\C)$-Higgs bundle and $PGL(n,\C)$-Higgs bundle are defined in a similar way (see \cite[Section 2]{biswas2019symplecticGHiggs}). 
\end{rem}
In order to discuss moduli spaces, we first define the stability conditions we will be using. 
We shall follow the definition of stability conditions in \cite{biswas2018symplectic}. Essentially, the stability condition for a framed Higgs bundle is just the stability condition for a $K_{\sm}(D)$-twisted Higgs bundle. More precisely, we say that a framed Higgs bundle $(E, \theta, \delta)$ is stable (semistable respectively) if for every $\theta$-invariant proper subbundle $F\subset E$, that is, $\theta(F) \subset F\otimes K(D)$, we have $\mu (F)<\mu(E) $ ($\mu(F)\leq \mu(E)$ respectively). Here we write $\mu$  for the slope $\mu(E)  =\textrm{deg}(E)/\textrm{rk}(E).$

The following lemma and the next corollary can be found in \cite[Lemma 2.3]{biswas2018symplectic}. We record them here for future reference. Let $(E, \theta) $ and $(E, \theta') $ be $K_{\sm}(D)$-valued semistable Higgs bundles on $\sm$ with $\mu(E)= \mu(E').$ 

\begin{lemma}\label{rigidlemma}
Let $f: E\to E'$ be a $\cO_\sm$-modules homomorphism such that 
\begin{enumerate}
    \item $\theta'\circ f =(f\otimes Id_{K_{\sm}(D)})\circ \theta $,
    \item there is a point $x_0\in \sm$ such that $f|_{x_0}= 0$, 
\end{enumerate}
then $f$ vanishes identically. 
\end{lemma}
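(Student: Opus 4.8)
The plan is to run the standard rigidity argument for semistable Higgs bundles of equal slope; the only feature special to the Higgs setting is that one must keep track of the $\theta$-invariance so that the semistability hypotheses of $(E,\theta)$ and $(E',\theta')$ can actually be invoked. Write $\mu := \mu(E) = \mu(E')$, and argue by contradiction: suppose $f \neq 0$. Condition (1) says $f$ is a morphism of Higgs bundles, and from it both $\Ker f \subseteq E$ and $\Image f \subseteq E'$ inherit invariance. Indeed, if $f(v)=0$ then $(f\otimes \id)(\theta(v)) = \theta'(f(v)) = 0$, so $\theta(\Ker f) \subseteq \Ker f \otimes K_\sm(D)$; dually, for $w = f(v)$ one has $\theta'(w) = (f\otimes \id)(\theta(v)) \in \Image f \otimes K_\sm(D)$. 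Hence $\Ker f$ is a $\theta$-invariant subsheaf of $E$ and $\Image f$ is a $\theta'$-invariant subsheaf of $E'$, and semistability applies to both.

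Next I would carry out the slope bookkeeping. Let $K = \Ker f$ with saturation $\bar K \supseteq K$; the saturation is again $\theta$-invariant, so semistability of $E$ gives $\mu(\bar K) \le \mu$, hence $\mu(E/\bar K) \ge \mu$. Since $\Image f \cong E/K$ surjects onto $E/\bar K$ with torsion kernel $\bar K/K$, comparing degrees and ranks yields $\mu(\Image f) \ge \mu(E/\bar K) \ge \mu$, with equality in the first inequality iff $K=\bar K$. On the other hand $\Image f$ is a subsheaf of the semistable $E'$, so $\mu(\Image f) \le \mu$. These force $\mu(\Image f) = \mu$ and $K = \bar K$; in particular $K$ is saturated, so $\Image f \cong E/K$ is locally free and $E \twoheadrightarrow \Image f$ is a surjection of vector bundles. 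Running the same comparison for the saturation of $\Image f$ inside $E'$ shows that $\mu$-equality forces $\Image f$ to be saturated as well, i.e. a subbundle of $E'$.

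Finally I would extract the contradiction from condition (2). By the above, $f$ factors as $E \xrightarrow{g} \Image f \hookrightarrow E'$, where $g$ is a surjective morphism of vector bundles and the inclusion $\iota$ is an inclusion of a subbundle. Consequently $g$ is surjective on every fiber and $\iota$ is injective on every fiber. Evaluating at $x_0$, the relation $f|_{x_0} = \iota|_{x_0} \circ g|_{x_0} = 0$ together with injectivity of $\iota|_{x_0}$ forces $g|_{x_0} = 0$; but $g|_{x_0}$ is surjective onto $(\Image f)_{x_0}$, so $(\Image f)_{x_0} = 0$, whence $\rank(\Image f) = 0$ and $\Image f = 0$, contradicting $f \neq 0$. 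Therefore $f$ vanishes identically.

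The only step demanding genuine care is the slope/saturation bookkeeping in the middle paragraph; the rest is formal. The essential point is that equality of slopes collapses the a priori inequalities into equalities, which is exactly what upgrades $\Image f$ to an honest subbundle and $g$ to a \emph{fiberwise surjective} bundle map — and a fiberwise surjective map onto a nonzero bundle cannot vanish at any point. I would expect the $\theta$-invariance verifications and the degree comparison under saturation to be the places where one should write things out carefully, but neither presents a real obstacle.
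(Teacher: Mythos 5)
Your proof is correct, and it is the standard semistability rigidity argument. Note that the paper itself does not prove this lemma but only records it with a citation to \cite[Lemma 2.3]{biswas2018symplectic}; your argument (kernel and image are $\theta$-invariant, equality of slopes forces both to be subbundles, hence $f$ has constant fiberwise rank and must vanish identically once it vanishes at $x_0$) is essentially the proof given in that reference, so there is nothing to correct.
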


\begin{corollary}\label{no_automorphism}
    A semistable framed Higgs bundle admits no non-trivial automorphism. 
\end{corollary}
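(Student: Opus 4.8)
The plan is to deduce the corollary directly from the rigidity Lemma~\ref{rigidlemma} by exploiting the framing. First I would unwind what an automorphism of a framed Higgs bundle is. Specializing the definition of a morphism to the case $(E',\theta',\delta')=(E,\theta,\delta)$, an automorphism is an $\cO_\sm$-module isomorphism $f:E\to E$ satisfying the two conditions $\theta\circ f=(f\otimes\mathrm{Id}_{K_\sm(D)})\circ\theta$ and $\delta\circ f|_D=\delta$. Since $\delta:E_D\xrightarrow{\sim}\oplus_{i=1}^n\cO_D$ is an isomorphism, the second condition forces $f|_D=\mathrm{Id}_{E_D}$; this is the crucial place where the framing enters, as it pins down the behaviour of $f$ along the divisor.

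Next I would set $g:=f-\mathrm{Id}_E$, an $\cO_\sm$-module homomorphism $E\to E$, and check that it verifies the two hypotheses of Lemma~\ref{rigidlemma} with $(E,\theta)=(E',\theta')$ (legitimate, since $E$ is semistable and $\mu(E)=\mu(E)$). For the first hypothesis, both $f$ and $\mathrm{Id}_E$ commute with $\theta$ in the Higgs sense, so by linearity their difference does as well: $\theta\circ g=\theta\circ f-\theta=(f\otimes\mathrm{Id})\circ\theta-\theta=(g\otimes\mathrm{Id}_{K_\sm(D)})\circ\theta$. For the second hypothesis, I would use that $D$ is nonempty (indeed $d\geq 2$): choosing any point $x_0\in D$, the identity $f|_D=\mathrm{Id}_{E_D}$ gives $g|_{x_0}=f|_{x_0}-\mathrm{Id}=0$, exactly the vanishing at a point required by the lemma.

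Lemma~\ref{rigidlemma} then forces $g$ to vanish identically, i.e. $f=\mathrm{Id}_E$, so the only automorphism of $(E,\theta,\delta)$ is the identity. I do not expect any genuine obstacle here: the argument is a short formal consequence of the rigidity lemma, and the only point worth stating explicitly is that the framing supplies the vanishing point $x_0$ on $D$ that the lemma needs, while semistability with equal slopes is what licenses invoking the lemma in the first place.
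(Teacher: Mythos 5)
Your proof is correct and follows exactly the paper's own argument: use the framing compatibility to see that an automorphism restricts to the identity on $D$, then apply the rigidity Lemma~\ref{rigidlemma} to $f-\mathrm{Id}_E$ to conclude it vanishes identically. The paper's proof is just a terser version of the same reasoning; your added verifications (the Higgs-field intertwining of $f-\mathrm{Id}_E$ and the slope/semistability hypotheses) are the details it leaves implicit.
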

\begin{proof}
Indeed, suppose $(E, \theta, \delta)$ admits an automorphism $h$, then the morphism $h-Id_E$ vanishes on $D$. By the Lemma $\ref{rigidlemma}$ above, $h- Id_E $ vanishes identically or equivalently $h= Id_E.$ 
\end{proof}

We denote $\fg:= \mathfrak{sl}_n$ and $\fg_E:= \End_0(E) $. Let $\ft$ be the vector subspace of diagonal traceless $n\times n$ matrices and $\fq$ be the orthogonal complement of $\ft$ with respect to the Killing form, i.e. the vector subspace of $n\times n$ matrices whose diagonal entries are all zero. We have $\fg = \ft\oplus \fq$. Given a framing $\delta$ of $E$, we can define the $\delta$-restrictions to $D$ as the compositions:
\begin{align*}
    \fg_E \twoheadrightarrow \fg_E\otimes \cO_D \xrightarrow{Ad_{\delta} }\fg \otimes \cO_D \twoheadrightarrow \fq\otimes \cO_D \\
    \fg_E \twoheadrightarrow \fg_E\otimes \cO_D \xrightarrow{Ad_{\delta} }\fg \otimes \cO_D \twoheadrightarrow \ft\otimes \cO_D 
\end{align*}
where the maps $\fg\otimes \cO_D\twoheadrightarrow \fq\otimes \cO_D$ and $\fg\otimes \cO_D\twoheadrightarrow \ft\otimes \cO_D$ are given by the projections for the decomposition $\fg = \ft \oplus \fq.$ Here $Ad_{\delta}$ sends an element $s\in \fg_E\otimes \cO_D$ to $\delta\circ s\circ \delta^{-1}\in \fg \otimes \cO_D$. 

Given a framed bundle $(E, \delta)$, we define subsheaves $\fg'_E, \fg''_E \subset \fg_E$ as the kernels
\begin{align*}
    0 \to \fg'_E \to \fg_E \to \fq_D: = i_*\fq \to 0\\
    0\to  \fg''_E \to \fg_E \to \ft_D:= i_*\ft\to 0 
\end{align*}
where $i:D\hookrightarrow \sm$ is the inclusion. In other words, a section of endomorphism in $\fg'_E$ ($\fg''_E$ respectively) restricted to $p \in D$ is diagonal (anti-diagonal respectively) with respect to $\delta$.

\begin{definition}
We say that a framed Higgs bundle $(E,\theta, \delta)$ is diagonally framed if $\theta\in H^0(\sm, \fg'_E\otimes K_\sm(D))\subset H^0(\sm, \fg_E\otimes K_\sm(D))$. 
\end{definition}

By the results of \cite{simpson1994moduli}\cite{Simpson1994ModuliOR} \cite[Section 2]{biswas2018symplectic}, it is shown that the moduli space of semistable framed $SL(n,\C)$-Higgs bundles $\cM_F(n,D)$ exists as a fine moduli space that is a smooth irreducible quasi-projective variety. The moduli space we are interested in is the moduli space of semistable diagonally framed $SL(n,\C)$-Higgs bundle, denoted by $\overline{\cM}^{\Delta}(n,D)$. It is clear that $\overline{\cM}^\Delta(n,D)$ is a subvariety of $\cM_F(n,D).$  

\begin{rem}
Unless mentioned otherwise, we will assume all diagonally framed Higgs bundles are semistable with structure group $SL(n,\C)$ throughout the paper. 
\end{rem}

For each $p\in D$, there is a natural $S_n$-action on $\oplus _{i=1}^n\cO_p$ by permuting the order of the components
\begin{equation*}
    \sigma: \oplus _{i=1}^n\cO_p\xrightarrow{\sim}\oplus _{i=1}^n\cO_p, \quad (s_1,...,s_n) \mapsto (s_{\sigma(1)},...,s_{\sigma(n)}), \quad \textrm{where  }\sigma\in S_n.
\end{equation*}
For each $p\in D$, this induces a $S_n$-action on the space of framings
\begin{equation*}
    \sigma \cdot \delta  = \sigma \circ \delta : E|_p \to \oplus _{i=1}^n\cO_p \xrightarrow{\sigma }\oplus _{i=1}^n\cO_p . 
\end{equation*}
Hence, the moduli spaces $\overline{\cM}^{\Delta}(n,D)$ and $\cM_{F}(n,D)$ admit a $S_n^{|D|}$-action: for $\underline{\sigma}\in S_n^{|D|}$,
\begin{equation*}
    \underline{\sigma}: (E, \theta, \delta) \mapsto  (E, \theta, \underline{\sigma}\cdot \delta) , \quad \textrm{where} \quad \underline{\sigma}\cdot \delta: E|_D\to\oplus _{i=1}^n \cO_D\to \oplus _{i=1}^n\cO_D.
\end{equation*}
Since the group is finite, we can consider the quotient $\cM_F(n,D) / (S_n^{|D|})$. The effect of taking quotient is that, for a fixed Higgs bundle, framings that differ only in reordering of components will be identified. More precisely, a morphism between unordered framed Higgs bundles $(E,\theta,\delta)$ and $(E',\theta',\delta')$ is a map $f: E\to E'$ such that 
\begin{equation*}
    \delta\circ f|_{D} = \underline{\sigma} \circ \delta' \textrm{   for some   }\underline{\sigma}\in S_n^{|D|}, \quad \theta' \circ f = (f\otimes Id_{K_{\sm}(D)})\circ \theta.
\end{equation*}
In other words, $\cM_F(n,D) / (S_n^{|D|})$ now parametrizes unordered framed Higgs bundles. However, this group action is not free. In order to get a free action by $S_n^{|D|}$, we will assume that the associated spectral curve is smooth and unramified over $D$, or equivalently, the residue of $\theta $ at $D$ has distinct eigenvalues. More precisely, we define $B^{ur}$ to be the locus of smooth cameral curves (see Section \ref{sec: cameral description}) which are unramified over $D$ and have simple ramifications. Of course, the associated spectral curve for $b\in B^{ur}$ is automatically a smooth spectral curve that is unramified over $D$, and the necessity to work with smooth cameral curve with simple ramifications will be explained in Section 5. Moreover, we restrict to the subvariety $\overline{\cM}^\Delta(n,D)^{ur} := \overline{h}^{-1}_{\Delta}(B^{ur})$ where $\overline{h}_\Delta$ denotes the composition $\overline{\cM}^\Delta(n,D) \hookrightarrow \cM_F(n,D) \xrightarrow{f_1} \cM(n,D) \xrightarrow{h}B$ and $f_1$ denotes the forgetful map. 
\begin{lemma}
The $S_n^{|D|}$-action on $\overline{\cM}^\Delta(n,D)^{ur} $ is free. 
\end{lemma}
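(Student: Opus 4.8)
The plan is to show that the stabilizer of every point $(E,\theta,\delta)$ in $\overline{\cM}^\Delta(n,D)^{ur}$ under the $S_n^{|D|}$-action is trivial. Unwinding the definition, an element $\underline{\sigma}\in S_n^{|D|}$ fixes the isomorphism class of $(E,\theta,\delta)$ precisely when there is an isomorphism of framed Higgs bundles $(E,\theta,\delta)\xrightarrow{\sim}(E,\theta,\underline{\sigma}\cdot\delta)$; that is, there exists a map $f:E\to E$ commuting with the Higgs field, $\theta\circ f=(f\otimes Id_{K_\sm(D)})\circ\theta$, such that $\delta\circ f|_D=\underline{\sigma}\circ\delta$. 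The strategy is to argue that the compatibility of $f$ with the diagonal framing and the Higgs field, together with the hypothesis that the residue has distinct eigenvalues over $D$, forces $\underline{\sigma}$ to be the identity.

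First I would record that $f$ is an endomorphism of the \emph{underlying} Higgs bundle $(E,\theta)$. By Lemma \ref{rigidlemma}, such an $f$ is rigid: if it vanished at any single point it would vanish identically, so in particular $f$ is either an isomorphism or fails to be so only in a controlled way. Since $(E,\theta,\delta)$ is semistable and admits no nontrivial automorphism by Corollary \ref{no_automorphism}, the natural candidate is $f=Id_E$, which would force $\underline{\sigma}\cdot\delta=\delta$ and hence $\underline{\sigma}=\id$. The main step is therefore to rule out the possibility that a nontrivial permutation $\underline{\sigma}$ can be realized by some $f\neq Id_E$.

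The key local computation is at the divisor $D$. Restricting to a point $p\in D$, the diagonal framing $\delta$ identifies the residue $\Res_p(\theta)$ with a diagonal matrix $\diag(a_1,\dots,a_n)$, and over $B^{ur}$ the eigenvalues $a_1,\dots,a_n$ are \emph{distinct}. The endomorphism $f|_p$ must commute with $\Res_p(\theta)$ (this is the residue of the relation $\theta\circ f=(f\otimes Id)\circ\theta$), and a matrix commuting with a diagonal matrix with distinct entries is itself diagonal. On the other hand, the framing condition $\delta\circ f|_p=\sigma_p\circ\delta$ says that in the framing coordinates $f|_p$ is the permutation matrix $\sigma_p$. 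A permutation matrix is diagonal only when $\sigma_p=\id$; thus $\sigma_p$ must be trivial at every $p\in D$, giving $\underline{\sigma}=\id$ and $\delta\circ f|_D=\delta$. Then $f-Id_E$ vanishes on $D$, so by Lemma \ref{rigidlemma} applied to the endomorphism $f-Id_E$ of $(E,\theta)$ we conclude $f=Id_E$, and the action is free.

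\textbf{The main obstacle} I anticipate is the bookkeeping at $D$: one must carefully pass from the global Higgs-field intertwining relation to the statement that $f|_p$ commutes with the residue $\Res_p(\theta)$, since $\theta$ takes values in the twisted bundle $\fg_E\otimes K_\sm(D)$ and the residue is extracted via the pole along $D$. Making this precise requires choosing a local trivialization of $K_\sm(D)$ near $p$ and checking that the leading (polar) term of the intertwining relation is exactly the commutation of $f|_p$ with the residue. Once this residue computation is set up correctly, the distinct-eigenvalue hypothesis on $B^{ur}$ does all the work, and the rest follows formally from the two rigidity results already established.
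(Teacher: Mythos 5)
Your proof is correct and takes essentially the same route as the paper: restrict both compatibility conditions to $D$, note that in framing coordinates $f|_D$ is the permutation matrix $\underline{\sigma}$ and must commute with the diagonal residue of $\theta$, whose entries are distinct over $B^{ur}$, which forces $\underline{\sigma}=\id$. The paper stops there, since that is all freeness requires; your closing step showing $f=Id_E$ via Lemma \ref{rigidlemma} is correct but superfluous, as is the preliminary appeal to Corollary \ref{no_automorphism}.
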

\begin{proof}
Consider $(E, \theta, \delta) \in \overline{\cM}^\Delta(n,D)^{ur} $ and suppose that there exists $\underline{\sigma}\in S_n^{|D|}$ and an isomorphism $\alpha: (E, \theta, \delta ) \to (E, \theta, \underline{\sigma}\circ \delta)$. The compability condition $\delta\circ \alpha|_D = \underline{\sigma}\circ \delta $ implies that $\delta\circ \alpha|_D \circ \delta^{-1}  = \underline{\sigma}$, while the compatibility condition $\theta \circ \alpha =( \alpha \otimes Id_{K_{\sm}(D)})\circ \theta$ restricted to $D$ is equivalent to $\theta_\delta\circ \underline{\sigma} = \underline{\sigma} \circ \theta_\delta $ where $\theta_\delta := \delta^{-1}\theta|_D \delta.$ The last relation  $\theta_\delta\circ \underline{\sigma} = \underline{\sigma} \circ \theta_\delta$ is clearly not possible as $\theta_\delta$ is diagonal with distinct eigenvalues at each $p\in D.$ 
\end{proof} 

Since the $S_n^{|D|}$-action on $\overline{\cM}^\Delta(n,D)^{ur} $ is finite and free, we get a geometric quotient $\cM^\Delta(n,D)^{ur} : = \overline{\cM}^{\Delta}(n,D)^{ur}/(S_n^{|D|}) $. The variety $\cM^{\Delta}(n,D)^{ur}$ parametrizes unordered diagonally framed Higgs bundles.

Clearly, there is a morphism $f_2:\cM^\Delta(n,D)^{ur} \to \cM(n,D)^{ur}: = h^{-1}(B^{ur})$ by forgetting the framings. For our purpose of proving Theorem \ref{thm:Main theorem}, we will need to study the composition of the forgetful map $f_2$ and the Hitchin map $h$, denoted by $h_{\Delta}^{ur} : \cM^{\Delta}(n,D)^{ur}\xrightarrow{f_2} \cM(n,D)^{ur} \xrightarrow{h^{ur}} B^{ur}$. We summarize the relation among the moduli spaces over $B^{ur}$:
\begin{equation}
    \begin{tikzcd}
    \overline{\cM}^{\Delta}(n,D)^{ur} \arrow[r,hook]\arrow[d,"q"]&\cM_F(n,D)^{ur}\arrow[d,"f_1"]\\
    \cM^\Delta(n,D)^{ur} \arrow[r,"f_2"]\arrow[rd,"h_\Delta^{ur}"]&\cM(n,D)^{ur}\arrow[d,"h^{ur}"] \\
    &B^{ur}
    \end{tikzcd}
\end{equation}
where $\cM_F(n,D)^{ur}: =(h\circ f_1)^{-1}(B^{ur}).$

\subsection{Spectral correspondence}
We explain the spectral correspondence for unordered diagonally framed Higgs bundles (see Proposition \ref{spectralcorrespondence}). After that, we describe the Hodge structures of a generic Hitchin fiber which will be used in the proof of the main theorem. 
\begin{definition}
Let $D$ be an effective reduced divisor on $C.$ A $D$-framed line bundle on a curve $C$ is a pair $(L, \beta)$ where $L$ is a line bundle and $\beta : L|_D\xrightarrow{\sim } \cO_D $ is an isomorphism. 
\end{definition}
\begin{rem}
Unless mentioned otherwise, we will call $(L, \beta)$ a framed line bundle whenever the divisor $D$ is clear from the context.
\end{rem}

\begin{proposition}\label{framedlinebundles}  
Let $C$ be a projective smooth curve and $D$ a reduced divisor on $C$. Let $C^\circ=C\setminus D$, $j:C^\circ\to C$ and $i: D\to C$ be the natural inclusions. The isomorphism classes of degree 0 framed line bundles on $C$ are parametrized by the generalized Jacobian
\begin{equation}
    \Jac(C^\circ): = \frac{H^0(C ,\Omega_{C} (\log D))^\vee}{H_1(C^\circ, \Z)}.
\end{equation}
\end{proposition}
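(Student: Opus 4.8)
The plan is to realize degree $0$ framed line bundles as the first cohomology of a single analytic sheaf and then compute that cohomology by an exponential sequence adapted to the framing, exactly paralleling the classical transcendental description of $\Pic^0(C)$. Write $D=\{p_1,\dots,p_d\}$ and let $\cF\subset\cO_C^*$ be the subsheaf of invertible functions that are identically $1$ along $D$, i.e.\ the kernel of the restriction $\cO_C^*\to\cO_D^*=\bigoplus_{i=1}^d\C^*_{p_i}$. First I would check that $H^1(C,\cF)$ \emph{is} the set of isomorphism classes of framed line bundles: a \v{C}ech $1$-cocycle valued in $\cF$ is a system of transition functions equal to $1$ on $D$, so it produces a line bundle $L$ together with a distinguished trivialization $\beta\colon L|_D\xrightarrow{\sim}\cO_D$ obtained by gluing local frames, and conversely. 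There are no automorphisms to spoil this, since scaling by $c\in\C^*=H^0(\cO_C^*)$ preserves a framing only if $c=1$ (as $D\neq\emptyset$). The multiplicative long exact sequence of $1\to\cF\to\cO_C^*\to\cO_D^*\to1$, together with $H^0(\cF)=\{1\}$ and $H^1(\cO_D^*)=0$, exhibits $H^1(C,\cF)$ as an extension of $\Pic(C)$ by $(\C^*)^d/\C^*=(\C^*)^{d-1}$; restricting to the kernel of the degree map isolates the degree $0$ framed line bundles $H^1(C,\cF)^0$, an extension of $\Jac(C)$ by $(\C^*)^{d-1}$.

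The heart of the argument is the exponential sequence attached to $\cF$. Since a holomorphic function vanishing on $D$ exponentiates to a unit equal to $1$ on $D$, and conversely any unit equal to $1$ on $D$ admits a local logarithm vanishing on $D$, there is a short exact sequence of analytic sheaves
\[
0\longrightarrow j_!\Z_{C^\circ}\longrightarrow \cO_C(-D)\xrightarrow{\ \exp(2\pi i\,\cdot)\ }\cF\longrightarrow 1,
\]
whose kernel is the extension by zero of the constant sheaf on $C^\circ$ (a locally constant integer vanishing on $D$ must vanish in a neighborhood of $D$). Taking cohomology and using $H^0(\cF)=\{1\}$ to kill the connecting map out of degree $0$, I obtain
\[
0\to H^1(C,j_!\Z_{C^\circ})\to H^1(C,\cO_C(-D))\to H^1(C,\cF)\xrightarrow{\ c\ }H^2(C,j_!\Z_{C^\circ}),
\]
where $c$ agrees with the first Chern class, hence with the degree, via the evident map of this sequence into the ordinary exponential sequence $0\to\Z_C\to\cO_C\to\cO_C^*\to1$ (and $H^2(C,j_!\Z_{C^\circ})\cong H^2(C,\Z)\cong\Z$). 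Consequently $H^1(C,\cF)^0=\ker c\cong H^1(C,\cO_C(-D))/H^1(C,j_!\Z_{C^\circ})$.

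It then remains to identify the two remaining terms with the data in the statement. Serre duality gives $H^1(C,\cO_C(-D))^\vee\cong H^0(C,\Omega_C\otimes\cO_C(D))$, and on a curve with \emph{reduced} $D$ one has $\Omega_C(\log D)=\Omega_C\otimes\cO_C(D)$, so $H^1(C,\cO_C(-D))\cong H^0(C,\Omega_C(\log D))^\vee$, the numerator of $\Jac(C^\circ)$. For the lattice, $j_!\Z_{C^\circ}$ computes compactly supported cohomology of the open curve, and Poincar\'e--Lefschetz duality on the oriented surface $C^\circ$ yields $H^1(C,j_!\Z_{C^\circ})\cong H^1_c(C^\circ,\Z)\cong H_1(C^\circ,\Z)$, which is torsion free of rank $2g+d-1$. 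Combining the displayed identifications rewrites the exact sequence as $0\to H_1(C^\circ,\Z)\to H^0(C,\Omega_C(\log D))^\vee\to H^1(C,\cF)^0\to 0$.

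The main obstacle is to check that the embedding $H_1(C^\circ,\Z)\hookrightarrow H^0(C,\Omega_C(\log D))^\vee$ produced this way is precisely the \emph{period} lattice $\gamma\mapsto\bigl(\omega\mapsto\int_\gamma\omega\bigr)$ implicit in the definition of $\Jac(C^\circ)$, and not merely some abstract full-rank sublattice. For this I would invoke the de Rham comparison for the open curve: Deligne's logarithmic complex computes $H^1(C^\circ,\C)\cong\mathbb H^1(C,\Omega_C^\bullet(\log D))$ with $F^1=H^0(C,\Omega_C(\log D))$, and under the Serre and Poincar\'e--Lefschetz dualities above the connecting map of the exponential sequence becomes the inclusion of integral classes followed by projection modulo $F^1$, which dualizes to integration of log-differentials against cycles in $C^\circ$. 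This is the open-curve analogue of the classical fact that the period lattice of $\Pic^0(C)$ is $\int_{H_1(C,\Z)}(-)$; granting it, the exact sequence identifies $H^1(C,\cF)^0$ with $H^0(C,\Omega_C(\log D))^\vee/H_1(C^\circ,\Z)=\Jac(C^\circ)$, completing the proof.
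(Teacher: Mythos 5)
Your proof is correct and follows essentially the same route as the paper: the sheaf you call $\cF$ is the paper's $\cO_C^*(-D)$, and both arguments run the exponential sequence $0\to j_!\underline{\Z}\to\cO_C(-D)\to\cO_C^*(-D)\to 1$, identify $H^1$ of this sheaf with isomorphism classes of framed line bundles, cut out degree $0$ as the kernel of the first Chern class map, and invoke duality to match the result with $H^0(C,\Omega_C(\log D))^\vee/H_1(C^\circ,\Z)$. The only (harmless) differences are presentational: you establish the cocycle interpretation directly by normalizing trivializations along $D$, where the paper uses the quasi-isomorphism $\cO_C^*(-D)\simeq[\cO_C^*\to i_*\C^*]$, and you spell out the Serre/Poincar\'e--Lefschetz duality and period-lattice compatibility that the paper delegates to the reference of Arapura--Oh.
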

\begin{proof}
We follow some of the ideas in the work of Arapura-Oh \cite{arapura1997abel}. By duality, we can identify 
\begin{equation*}
    \Jac(C^\circ) = \frac{H^0(C, \Omega_C (\log D))^\vee }{H_1(C^\circ, \Z)} \cong \frac{H^1(C,\cO(-D))  }{H^1(C, D , \Z)}
\end{equation*}
Consider the exponential sequence
\begin{equation}\label{compactly-supported-exp-sequence}
    0\to  j_!\underline{\Z} \to \cO_C(-D) \xrightarrow{\exp({2\pi i}(-) )} \cO_C^*(-D) \to 0
\end{equation}
where $\cO_C^*(-D)$ is defined as the subsheaf of $\cO_C^*$ consisting of functions with value 1 on $D.$ It induces a long exact sequence 
\begin{equation*}
\begin{aligned}
    \cdots \to & H^1( C,  j_!\Z) \cong H^1(C,D,\Z) \to H^1(C , \cO_C(-D) ) \to H^1(C, \cO_C^*(-D)) \\ \xrightarrow{\text{c}_1} & H^2(C, j_!\Z) \cong H^2(C,D,\Z) \to H^2(C, \cO_C(-D) ) \to H^2(C, \cO_C^*(-D)) \to \cdots
\end{aligned}
\end{equation*}
where the map $\text{c}_1: H^1(C,\cO_C^*(-D))\to H^2(C,j_!\Z) \cong H^2(C,D,\Z)\cong H^2(C,\Z)\cong \Z$ is the first Chern class map. The group $H^1(C, \cO_C^*(-D))$ naturally parametrizes all framed line bundles. 
Indeed, the sheaf $\cO_C^*(-D)$ sits in a short exact sequence 
\begin{equation*}
    1\to \cO_C^*(-D) \to \cO_C^*\to i_*\C^*\to 1
\end{equation*}
which induces a quasi-isomorphism $\cO_C^*(-D) \to F^\bullet:= [\cO_C^* \to i_*\C^*]$ and hence an isomorphism $H^1(C, \cO_C^*(-D))\cong \bbH^1(C, F^\bullet).$ By choosing a \v{C}ech covering $(U_\alpha)$, a 1-cocycle in $Z^1(U_\alpha, F^\bullet)$ is a pair of $f_{\alpha\beta}\in H^0(U_{\alpha\beta}, \cO_C^*) $ and $\eta_{\alpha}\in H^0(U_\alpha, i_*\C^*)$ such that $ \eta_\alpha /\eta_\beta = f_{\alpha\beta} |_D .$ The data $f_{\alpha \beta}$ represents a line bundle. By assumption, $f_{\alpha\beta}|_D= 1$ implies that $\eta_\alpha|_D =\eta_\beta|_D\in \C^*$. Since a framing of a line bundle at a point is equivalent to a choice of a non-zero complex number, $(\eta_\alpha)$ defines a framing of the line bundle at $D.$ In other words, the pair $(f_{\alpha\beta}, \eta_\alpha)$ represents a framed line bundle, and a class in $\bbH^1(C, F^\bullet)$ represents an isomorphism class of the framed line bundle. 

In particular, we find that 
\begin{equation*}
   \Jac(C^\circ)\cong \frac{H^1(C,\cO(-D))}{H^1(C,D,\Z)}\cong  \ker (\text{c}_1: H^1(C, \cO_C^*(-D) ) \to \Z ) 
\end{equation*}
which paramatrizes degree 0 framed line bundles. 
\end{proof}
We will apply the previous discussion to $C= \overline{\sm}_b$ , a spectral curve of $\sm$ corresponding to $b\in B^{ur}$. 
\begin{rem}
Unless mentioned otherwise, we will omit the the subscript $b$ in $\overline{\sm}_b$ and $\overline{\sm}_b^\circ$ in this section for convenience, as it is irrelevant to our discussion.
\end{rem}
Since we are mainly interested in $SL(n,\C)$-Higgs bundles, we will need to consider the Prym variety of the spectral cover $\overline{p}: \overline{\sm}\to \sm$. The norm map $\Nm:\Jac(\ts)\to \Jac(\Sigma)$ induces a morphism of short exact sequences 
\begin{equation*}
\begin{tikzcd}
    0 \arrow[r] &(\C^*)^{nd-1} \arrow[r]\arrow[d]& \Jac (\ts^\circ)\arrow[d,"\Nm^\circ"] \arrow[r]& \Jac(\ts ) \arrow[r]\arrow[d,"\Nm"]& 0\\
    0 \arrow[r] &(\C^*)^{d-1} \arrow[r]& \Jac (\sm^\circ) \arrow[r]& \Jac(\sm ) \arrow[r]& 0
    \end{tikzcd}
\end{equation*}
where $d=|D|$ and $\Nm^\circ:\Jac(\ts^\circ) \to \Jac(\sm^\circ)$ is defined by taking norms on line bundles and determinants on framings. Recall that $\Nm(L) = \det(\overline{p}_*L)\otimes \det(\overline{p}_*\cO_{\overline{\sm}})^\vee$ and for a framed line bundle $(L,\beta)\in \Jac(\overline{\sm}^\circ)$, the natural framing 
\begin{equation*}
    \overline{p}_*L|_x\xrightarrow{\sim} \bigoplus_{y\in \overline{p}^{-1}(x)} L_y \overset{\sim}{\underset{ \beta }{\longrightarrow}} \bigoplus_{y\in \overline{p}^{-1}(x)} \cO_y 
\end{equation*}
induces a framing on $\det(\overline{p}_*L)|_x$ over each $x \in D$. Also, there is a natural framing on $\det(\overline{p}_*\cO_{\overline{\sm}})^\vee|_x$ induced from the identity $Id: \cO_{\overline{\sm}}|_{\overline{p}^{-1}(x)}\to \cO_{\overline{\sm}}|_{\overline{p}^{-1}(x)}$. Both framings determine a framing on $\Nm(L)$ and hence the map $\Nm^\circ$. 

By taking the kernel of this morphism, we get a commutative diagram:
\begin{equation}\label{abeliangp_diagram}
\begin{tikzcd}
    0 \arrow[r] &(\C^*)^{(n-1)d} \arrow[r]\arrow[d]& \Prym (\ts^\circ,\sm^\circ) \arrow[r]\arrow[d]& \Prym(\ts ,\sm) \arrow[r]\arrow[d]& 0\\
    0 \arrow[r] &(\C^*)^{nd-1} \arrow[r]\arrow[d]& \Jac (\ts^\circ)\arrow[d,"\Nm^\circ"] \arrow[r]& \Jac(\ts ) \arrow[r]\arrow[d,"\Nm"]& 0\\
    0 \arrow[r] &(\C^*)^{d-1} \arrow[r]& \Jac (\sm^\circ) \arrow[r]& \Jac(\sm ) \arrow[r]& 0
    \end{tikzcd}
\end{equation}
where $\Prym(\ts^\circ, \sm^\circ):= \ker(\Nm^\circ)^0$, the connected component of the identity element of the kernel of $\Nm^\circ$.

\begin{proposition}(Spectral  correspondence \cite{biswas2018symplectic}).\label{spectralcorrespondence}
\\ 
For a fixed $b\in B^{ur}$, there is a one-to-one correspondence between degree zero framed line bundles on $\ts_b$ and unordered diagonally framed Higgs bundles on $\sm$. Moreover, the following results hold: 
\begin{enumerate}
    \item The fiber $h_{\Delta, GL(n)}^{-1}(b)$ is isomorphic to $\Jac(\ts_b^\circ)$; 
    \item The fiber $h^{-1}_{\Delta,SL(n)}(b)$ is isomorphic to $\Prym(\ts_b^\circ,\sm^\circ)$.
\end{enumerate}

\end{proposition}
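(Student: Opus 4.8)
The plan is to build the correspondence in two layers: first invoke the classical Beauville--Narasimhan--Ramanan spectral correspondence relating $K_\sm(D)$-twisted Higgs bundles to line bundles on the spectral curve, and then upgrade it by matching the framing data on the two sides. The parametrization statements (1) and (2) will then follow by feeding the resulting framed line bundles into Proposition \ref{framedlinebundles} and the norm-map diagram \eqref{abeliangp_diagram}, following the approach of \cite{biswas2018symplectic}.

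First I would recall that for $b\in B^{ur}$ the spectral cover $\overline{p}_b\colon \ts_b\to\sm$ is smooth of degree $n$, so that pushforward $L\mapsto(\overline{p}_{b*}L,\theta_L)$, with $\theta_L$ given by multiplication by the tautological section of $\overline{p}_b^*K_\sm(D)$, yields a bijection between line bundles on $\ts_b$ of the appropriate fixed degree and $K_\sm(D)$-twisted Higgs bundles $(E,\theta)$ with characteristic polynomial $b$. The point is to enhance this with framings. Since $b\in B^{ur}$ is unramified over $D$, the divisor $\overline{p}_b^{-1}(D)$ consists of $nd$ distinct points, and over each $p\in D$ the decomposition $E_p=\bigoplus_{y\in\overline{p}_b^{-1}(p)}L_y$ is precisely the eigenspace decomposition of $\Res_p\theta$, whose eigenvalues (the values of the tautological section at the $y$) are pairwise distinct. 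A diagonalizing framing $\delta$ of $E$ at $D$ is therefore the same datum as an ordering of each fiber $\overline{p}_b^{-1}(p)$ together with a trivialization of each eigenline $L_y$; passing to the $S_n^{|D|}$-quotient discards exactly the orderings, so an unordered diagonally framed Higgs bundle is precisely a $\overline{p}_b^{-1}(D)$-framed line bundle $(L,\beta)$ on $\ts_b$ in the sense of Proposition \ref{framedlinebundles}. This establishes the one-to-one correspondence, and claim (1) is then immediate from Proposition \ref{framedlinebundles}, which identifies degree-zero framed line bundles on $\ts_b$ with $\Jac(\ts_b^\circ)$, giving $h_{\Delta,GL(n)}^{-1}(b)\cong\Jac(\ts_b^\circ)$.

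For the $SL(n,\C)$-case I would impose the determinant constraint through the norm map. Under pushforward one has $\det E=\Nm(L)\otimes\det(\overline{p}_{b*}\cO_{\ts_b})$, and tracelessness of $\theta$ is automatic since $b$ lies in $B=\oplus_{i=2}^n H^0(\sm,K_\sm(D)^{\otimes i})$, whose vanishing linear coefficient forces $\Tr\theta=0$. The framing $\delta$ is compatible with the trivialization $\det E\cong\cO_\sm$ exactly when the induced framing on $\Nm(L)$ described in the text is the trivial one, so the framed $SL(n,\C)$-Higgs bundles over $b$ are exactly the framed line bundles $(L,\beta)$ with $\Nm^\circ(L,\beta)$ trivial in $\Jac(\sm^\circ)$. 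Because the Hitchin fiber is connected, I would identify it with the identity component $\ker(\Nm^\circ)^0=\Prym(\ts_b^\circ,\sm^\circ)$ appearing in diagram \eqref{abeliangp_diagram}, which proves (2).

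The hard part will be the bookkeeping in the framed $SL$-reduction: one must verify that the $\beta$-induced framing on $\det(\overline{p}_{b*}L)$ together with the identity framing on $\det(\overline{p}_{b*}\cO_{\ts_b})^\vee$ reproduces exactly the trivialization of $\det E$ coming from $\delta$, and, more subtly, that restricting to the identity component $\Prym$ rather than the full kernel $\ker(\Nm^\circ)$ is the correct normalization, which amounts to a connectedness statement for the fiber together with control of the component group of $\ker(\Nm^\circ)$. Matching degree conventions so that ``degree zero on $\ts_b$'' produces the prescribed determinant on $E$ is a routine but necessary twist that I would fix at the outset.
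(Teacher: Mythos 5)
Your proposal is correct and takes essentially the same route as the paper's proof: the Beauville pushforward correspondence, upgraded by identifying a diagonalizing framing with trivializations of the eigenlines $L_y$ over $D$ (the ordering of eigenlines being exactly what the $S_n^{|D|}$-quotient forgets), with claims (1) and (2) then read off from Proposition \ref{framedlinebundles} and the norm-map diagram \eqref{abeliangp_diagram}. The degree-normalization and identity-component subtleties you flag at the end are treated no more explicitly in the paper's own proof, so they mark no real divergence between the two arguments.
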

\begin{proof}
For simplicity, we assume $D=\{x\}, \overline{D}= \overline{p}^{-1}(x)$ in this proof. Let $L$ be a line bundle on $\ts_b$ and $(E, \theta)$ a Higgs bundle on $\sm$. Recall that \cite{beauville1989} there is a bijection between line bundles on $\ts_b$ and Higgs bundles on $\sm$ 
\begin{equation}
    \begin{tikzcd}
    \textrm{Line bundle $L$ on $\ts_b$  } \arrow[r, "\overline{p}_*",bend left]& \textrm{Higgs bundle $(E,\theta)$ on $\sm$}\arrow[l, "coker(\overline{p}^*\theta - \lambda Id)", bend left]
    \end{tikzcd}
\end{equation}
where $\lambda $ denotes the tautological section of $K_\sm(D)$. It remains to verify the bijection on framings. 

Pushing forward a $\overline{D} $-framed line bundle $(L,\beta)$ gives an unordered framed Higgs bundle $(\overline{p}_*L,\overline{p}_*\lambda,\delta) $ where
\begin{equation*}
    \delta: E|_x \xrightarrow{\sim }\bigoplus_{y\in p^{-1}(x) } L_y \overset{\sim}{\underset{ \beta }{\longrightarrow}} \bigoplus_{y\in p^{-1}(x)} \cO_y
\end{equation*}
is well-defined as an unordered framing. With respect to the unordered framing, the Higgs field $\overline{p}_*\lambda$ is diagonal as $\theta|_x:= \overline{p}_*\lambda$ defines multiplication by $\lambda_i$ on each eigenline $L_i$. 

Conversely, given an unordered diagonally framed Higgs bundle $(E, \theta, \delta)$, since we assume that $\theta|_x$ has distinct eigenvalues, for each $\lambda_i\in \overline{p}^{-1}(D)$, the natural composition
\begin{equation*}
    \ker(\overline{p}^*\theta- \lambda_i Id) \to E|_x\to \coker(\overline{p}^*\theta- \lambda_i Id)
\end{equation*}
is an isomorphism. The assumption that $\theta|_x$ is diagonal with respect to $\delta$ implies that there is a component $\cO_x\xhookrightarrow{\alpha_i} \oplus_{i=1}^n\cO_x$ such that 
\begin{equation*}
    \begin{tikzcd}
    \ker(\overline{p}^*\theta -\lambda_i Id)\arrow[r] & E|_x\\
    \cO_x \arrow[r,hook,"\alpha_i"]\arrow[u,"\cong"]&\oplus_{i=1}^n\cO_x\arrow[u,"\cong"]
    \end{tikzcd}
\end{equation*}
In particular, we get a framing $\cO_x \xrightarrow{\sim } \coker(\overline{p}^*\theta- \lambda_iId)$ for each $\lambda_i.$ 

Finally, claims (1), (2) follow from Proposition \ref{framedlinebundles}.
\end{proof}

\subsubsection{Hodge structures}
Recall that since $\ts^\circ$ is non-compact, $H^1(\ts^\circ, \Z)$ carries the $\Z$-mixed Hodge structure whose Hodge filtration is given by
\begin{equation}
    F^0 = H^1(\ts^\circ, \C) \supset F^1= H^0(\ts, {\Omega^1_{\ts}}(\log D)) \supset F^2=0.
\end{equation}
This induces the mixed Hodge structure on $(H^1(\ts^\circ, \Z))^\vee$ which is isomorphic to $H_1(\ts^\circ, \Z)$.

The Hodge filtration of this dual mixed Hodge structure is given by
\begin{equation*}
      F^{-1}= H^1(\ts^\circ, \C)^\vee\supset  F^{0} = \left(\frac{H^1(\ts^\circ, \C)}{H^0(\ts, {\Omega^1_{\ts}}(\log D))}\right)^\vee \supset F^1 = 0 
\end{equation*}
Note that the weight filtration on $H_1(\ts^\circ, \Z)$ is 
\begin{equation*}
    W_{-3}= 0\subset W_{-2}= \Z ^{nd-1} \subset W_{-1} =H_1(\ts^\circ, \Z).
\end{equation*}
Thus we can define as in \cite{carlson1979extensions} the Jacobian of this Hodge structure as 
\begin{equation}
    J(H_1(\ts^\circ,\Z) ):=  \frac{H_1(\ts^\circ,\C)}{F^{0}+H_1(\ts^\circ, \Z) }
\end{equation}

\begin{lemma}
There is an isomorphism between
\begin{equation*}
    J(H_1(\ts^\circ,\Z) )\cong   \Jac(\ts^\circ)
\end{equation*}
\end{lemma}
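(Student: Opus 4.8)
The plan is to identify both sides with the single quotient
\[
\frac{H^0(\ts, \Omega^1_{\ts}(\log D))^\vee}{H_1(\ts^\circ, \Z)},
\]
which by Proposition \ref{framedlinebundles} is precisely $\Jac(\ts^\circ)$. Write $V := H^1(\ts^\circ, \C)$ and $F := H^0(\ts, \Omega^1_{\ts}(\log D)) \subset V$ for the top piece of the Hodge filtration, so that $H_1(\ts^\circ, \C) = V^\vee$ and the canonical perfect pairing $H_1(\ts^\circ, \C) \times H^1(\ts^\circ, \C) \to \C$ is the de Rham pairing. The first step is to read off from the description of the dual Hodge filtration recalled just above the statement that $F^0 \subset H_1(\ts^\circ, \C)$ is exactly the annihilator $F^\perp = (V/F)^\vee$ of $F$.

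The second step is purely linear-algebraic: restriction of functionals along the inclusion $F \hookrightarrow V$ yields a canonical short exact sequence $0 \to F^\perp \to V^\vee \to F^\vee \to 0$, and hence a canonical isomorphism
\[
\frac{H_1(\ts^\circ, \C)}{F^0} = \frac{V^\vee}{F^\perp} \xrightarrow{\sim} F^\vee = H^0(\ts, \Omega^1_{\ts}(\log D))^\vee,
\]
under which a functional $\gamma \in V^\vee = H_1(\ts^\circ, \C)$ is sent to its restriction $\omega \mapsto \langle \gamma, \omega \rangle$. Since this map is $\C$-linear, once the lattices are matched it will automatically be an isomorphism of the associated quotient complex Lie groups.

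The third step, which is the only place requiring care, is to match the two integral lattices. I would verify that the quotient map $H_1(\ts^\circ, \C) \to F^\vee$ just constructed sends the singular lattice $H_1(\ts^\circ, \Z)$ onto the lattice used in the definition of $\Jac(\ts^\circ)$. This holds because the de Rham pairing of a class $\gamma \in H_1(\ts^\circ, \Z)$ with a logarithmic form $\omega \in H^0(\ts, \Omega^1_{\ts}(\log D))$ is the period integral $\int_\gamma \omega$, so the restriction of $\gamma$ is the functional $\omega \mapsto \int_\gamma \omega$, which is by definition the embedding $H_1(\ts^\circ, \Z) \hookrightarrow H^0(\ts, \Omega^1_{\ts}(\log D))^\vee$ appearing in Proposition \ref{framedlinebundles}. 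Consequently the isomorphism of the second step carries $F^0 + H_1(\ts^\circ, \Z)$ onto $H_1(\ts^\circ, \Z) \subset F^\vee$ and descends to
\[
J(H_1(\ts^\circ, \Z)) = \frac{H_1(\ts^\circ, \C)}{F^0 + H_1(\ts^\circ, \Z)} \xrightarrow{\sim} \frac{H^0(\ts, \Omega^1_{\ts}(\log D))^\vee}{H_1(\ts^\circ, \Z)} = \Jac(\ts^\circ),
\]
as desired. The main (and only mild) obstacle is the compatibility of the two lattice embeddings in this last step; everything else reduces to the formal duality $V^\vee/F^\perp \cong F^\vee$ combined with the explicit shape of the dual Hodge filtration given before the statement.
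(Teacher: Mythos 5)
Your proof is correct and takes essentially the same route as the paper's one-line argument, which computes $J(H_1(\ts^\circ,\Z)) = F^{-1}/(F^0 + H_1(\ts^\circ,\Z))$ and identifies this with $H^0(\ts,\Omega^1_{\ts}(\log D))^\vee / H_1(\ts^\circ,\Z)$ via the same duality $V^\vee/F^\perp \cong F^\vee$ you use (the paper's displayed chain omits the dual on $H^0(\ts,\Omega^1_{\ts}(\log D))$, evidently a typo). Your third step, checking that the quotient map carries the singular lattice onto the period lattice of Proposition \ref{framedlinebundles}, makes explicit a compatibility the paper leaves implicit.
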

\begin{proof}
By definition, we have
\begin{equation*}
    J(H_1(\ts^\circ,\Z) )=  \frac{H_1(\ts^\circ,\C)}{F^{0}+H_1(\ts^\circ, \Z) }  = \frac{F^{-1}}{F^0+ H_1(\ts^\circ, \Z)} \cong \frac{H^0(\ts, {\Omega^1_{\ts}}(\log D))}{H_1(\ts^\circ, \Z)}.
\end{equation*}
\end{proof}

\noindent Taking the first integral homology of every term in the diagram (\ref{abeliangp_diagram}), we get 
\begin{equation*}
\begin{tikzcd}
    0 \arrow[r] &(\Z)^{(n-1)d} \arrow[r]\arrow[d]& H_{\Delta,SL(n)} \arrow[r]\arrow[d]& H_{SL(n)} \arrow[r]\arrow[d]& 0\\
    0 \arrow[r] &(\Z)^{nd-1} \arrow[r]\arrow[d]& H_1 (\ts^\circ,\Z)\arrow[d,"\Nm^\circ"] \arrow[r]& H_1(\ts,\Z ) \arrow[r]\arrow[d,"\Nm"]& 0\\
    0 \arrow[r] &(\Z)^{d-1} \arrow[r]& H_1 (\sm^\circ,\Z) \arrow[r]& H_1(\sm ,\Z) \arrow[r]& 0
    \end{tikzcd}
\end{equation*}
where we define 
\begin{align}
    H_{\Delta,SL(n)}&: = H_1(\Prym (\ts^\circ,\sm^\circ),\Z) \cong \ker (\Nm^\circ: H_1(\ts^\circ,\Z)\to H_1(\sm^\circ,\Z)),\\
    H_{SL(n)}&:= H_1(\Prym(\ts ,\sm),\Z) \cong \ker (\Nm: H_1(\ts,\Z)\to H_1(\sm,\Z)).
\end{align}

Since the norm map is a morphism of mixed Hodge structures and taking the Jacobian is functorial, we immediately get the following result.

\begin{corollary}\label{Hodge structure of Prym}
    The Prym lattice $H_{\Delta, SL(n)}$ is torsion free and admits the $\Z$-mixed Hodge structure of type $\{(-1,-1), (-1,0), (0,-1)\}$ induced by the map $H_1(\Nm^\circ):H_1(\ts^\circ,\Z) \to H_1(\sm^\circ, \Z)$. In particular, the Jacobian $J(H_{\Delta, SL(n)})$ is isomorphic to $\Prym(\ts^\circ,\sm^\circ)$.
\end{corollary}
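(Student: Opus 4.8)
The plan is to exploit that the category of $\Z$-mixed Hodge structures is abelian and that the Jacobian functor $J(-)$ is well behaved on structures of type $\{(-1,-1),(-1,0),(0,-1)\}$, so that the whole statement reduces to applying $J$ to the homology exact sequences already assembled in diagram (\ref{abeliangp_diagram}).

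First I would record the mixed Hodge structure and torsion-freeness. Since $H_1(\Nm^\circ)\colon H_1(\ts^\circ,\Z)\to H_1(\sm^\circ,\Z)$ is a morphism of $\Z$-mixed Hodge structures (established just above) and this category is abelian, its kernel $H_{\Delta,SL(n)}$ acquires a canonical sub-mixed Hodge structure of $H_1(\ts^\circ,\Z)$. As $H_1(\ts^\circ,\Z)$ is of type $\{(-1,-1),(-1,0),(0,-1)\}$, so is any sub-structure; that all three Hodge types genuinely occur is read off from the top row of the homology diagram, whose weight filtration has $\Gr^W_{-2}=\Z^{(n-1)d}$ (type $(-1,-1)$) and $\Gr^W_{-1}=H_{SL(n)}$ (types $(-1,0),(0,-1)$). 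Torsion-freeness is then immediate, since $H_{\Delta,SL(n)}$ is a subgroup of the free abelian group $H_1(\ts^\circ,\Z)$.

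For the identification of Jacobians, the key input is that $J(-)$ is exact on short exact sequences of $\Z$-mixed Hodge structures of this type. This follows from strictness: for such a sequence the functors $V\mapsto V_\Z$ and $V\mapsto V_\C/F^0V_\C$ are both exact (strictness of morphisms of mixed Hodge structures with respect to $F^\bullet$), and one checks that $V_\Z\cap F^0V_\C=0$ for every structure of this type — an integral class lying in $F^0$ maps to $0$ in the pure weight $-1$ quotient $\Gr^W_{-1}$, hence lies in $W_{-2}$, where $F^0=0$. The snake lemma applied to the injection $V_\Z\hookrightarrow V_\C/F^0V_\C$ then yields exactness of $J$. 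Applying $J$ to the top row
\begin{equation*}
0\to \Z^{(n-1)d}\to H_{\Delta,SL(n)}\to H_{SL(n)}\to 0
\end{equation*}
of the homology diagram, together with $J(\Z^{(n-1)d})=(\C^*)^{(n-1)d}$, the preceding lemma $J(H_1(\ts^\circ,\Z))\cong\Jac(\ts^\circ)$ and its analogue $J(H_{SL(n)})\cong\Prym(\ts,\sm)$, reproduces exactly the top row of diagram (\ref{abeliangp_diagram}). Comparing the two via the compatible inclusions into $\Jac(\ts^\circ)$ and the five lemma then gives $J(H_{\Delta,SL(n)})\cong\Prym(\ts^\circ,\sm^\circ)$, modulo the connectedness subtlety addressed next.

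The main obstacle is this last identification rather than the formal bookkeeping: one must be careful that $\Prym(\ts^\circ,\sm^\circ)$ is defined as the identity component $\ker(\Nm^\circ)^0$, whereas $H_{\Delta,SL(n)}$ is the full kernel $\ker H_1(\Nm^\circ)$. The point is that $J$ of any mixed Hodge structure of this type is automatically connected (it is a semi-abelian variety), so the map $J(H_{\Delta,SL(n)})\to\Jac(\ts^\circ)$ induced by the inclusion factors through $\ker(\Nm^\circ)^0=\Prym(\ts^\circ,\sm^\circ)$; exactness of $J$ (or a rank count against $(\C^*)^{(n-1)d}$ and $\Prym(\ts,\sm)$) then forces it to be an isomorphism onto $\Prym(\ts^\circ,\sm^\circ)$. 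Verifying the vanishing $V_\Z\cap F^0V_\C=0$ and the strictness underlying exactness of $J$ is where the genuine content lies.
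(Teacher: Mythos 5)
Your proposal is correct and takes essentially the same route as the paper, which disposes of this corollary in a single line ("since the norm map is a morphism of mixed Hodge structures and taking the Jacobian is functorial"); your argument is that same route with the implicit content made explicit — the kernel as a sub-mixed Hodge structure of the stated type, exactness of $J$ via strictness together with $V_\Z\cap F^0V_\C=0$, and the identity-component comparison identifying $J(H_{\Delta,SL(n)})$ with $\Prym(\ts^\circ,\sm^\circ)$. The extra care you take with exactness of $J$ and the connectedness of the image is precisely what the paper's "functoriality" assertion is silently using, so there is no gap and no real divergence in method.
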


\begin{rem}
Note that the $\Z$-mixed Hodge structure of the above type on $H_{\Delta, SL(n)}$ is equivalent to the data of semi-abelian variety $J(H_{\Delta, SL(n)})$. A review of this correspondence is included in Appendix \ref{appendix}. 
\end{rem}

The Prym lattice $H_{\Delta, SL(n)}$ admits a sheaf-theoretic formulation which will be needed in later sections. Consider the trace map $\overline{p}_*\underline {\Z} \xrightarrow{\Tr} \underline{\Z} $ which is given by 
\begin{equation*}
    \begin{aligned}
        \Tr(U):&\overline{p}_*\underline{\Z}(U) \to \underline{\Z}(U),\quad 
        & (s_1,...,s_n)\mapsto \sum_{i=1}^n s_i
    \end{aligned}
\end{equation*}
if $U$ is away from the ramification divisor. Now, consider the short exact sequence of sheaves
 \begin{equation}\label{prymses}
            0 \to \cK\to \overline{p}_*\underline{\Z} \xrightarrow{\Tr} \underline{\Z} \to 0
\end{equation}
where $\cK$ is defined to be the kernel of the trace map.

        The morphism $H^1(\sm, \overline{p}_*\underline{\Z})\cong H^1(\ts,\Z) \to H^1(\sm, \Z)$ induces the norm map on the Jacobians. To see this, note that we have a morphism of exact sequences
        \begin{equation*}
        \begin{tikzcd}
            0\arrow[r]& \overline{p}_*\underline{\Z}\arrow[r]\arrow[d,"\Tr"]& \overline{p}_*\cO_{\ts} \arrow[r, "\overline{p}_*(\exp)"]\arrow[d,"\Tr"]&\overline{p}_*\cO_{\ts}^*\arrow[d,"\det"]\arrow[r]&0 \\
            0\arrow[r] & \underline{\Z}\arrow[r]& \cO_{\sm}\arrow[r, "\exp"]&\cO_{\sm}^*\arrow[r]&0
        \end{tikzcd}
        \end{equation*}
        where the right vertical morphism is the determinant morphism $\det: \overline{p}_*\cO^*_{\ts}\to \cO^*_\sm$. This induces the long exact sequence
        \begin{equation*}
        \begin{tikzcd}
            ...\arrow[r]& H^1(\sm,\overline{p}_*\underline{\Z})\arrow[r]\arrow[d]& H^1(\sm,\overline{p}_*\cO_{\ts}) \arrow[r]\arrow[d]&H^1(\sm,\overline{p}_*\cO_{\ts}^*)\arrow[d]\arrow[r,"\mathrm{c}_1"]&... \\
            ...\arrow[r] & H^1(\sm,\Z)\arrow[r]& H^1(\sm,\cO_{\sm})\arrow[r]&H^1(\sm,\cO^*_{\sm})\arrow[r,"\mathrm{c}_1"]&...
        \end{tikzcd}
        \end{equation*}
        and the right vertical morphism restricted to $\ker(\mathrm{c}_1)$ is exactly the norm map on the Jacobians. Since $\sm$ and $\ts$ are compact, we will also write $\Nm: H^1_c(\ts, \Z)=H^1(\ts,\Z)\to H^1(\sm,\Z) = H^1_c(\sm, \Z)$. 
        
        The short exact sequence (\ref{prymses}) induces the long exact sequence:
        \begin{align*}
        0&\to H^0_c(\sm,\cK) \to H^0_c(\sm, \overline{p}_*\Z_{\ts})\cong H^0_c(\ts, \Z)\to  H^0_c(\Sigma,\Z)\nonumber \\
            &\to H^1_c(\sm, \cK)\to H^1_c(\Sigma, \overline{p}_*\Z)\cong H^1_c(\ts, \Z)   \to H^1_c(\Sigma,\Z)
        \end{align*}
        Since the cokernel of the map $H^0_c(\ts,\Z)\to H^0_c(\Sigma,\Z)$ is torsion and $H^1_c(\overline{\sm},\Z)$ is torsion-free, it follows that the maximal torsion free quotient $H^1_c(\sm , \cK)_{\textrm{tf}}:= H^1_c(\sm , \cK)/H^1_c(\sm , \cK)_{\textrm{tors}} $ can be identified as follows
        \begin{equation*}
            H^1_c(\sm, \cK)_{\textrm{tf}} \cong \ker (H^1_c(\ts,\Z)\xrightarrow{\Nm} H^1_c(\Sigma,\Z)) \cong \ker (H_1(\ts,\Z)\xrightarrow{\Nm} H_1(\Sigma,\Z))
        \end{equation*}
        by Poincar\'{e} duality.

        Note that we could have used cohomology instead of compactly supported cohomology since the curve $\sm$ is compact, but the above argument also works for the noncompact curve $\sm^\circ$. In particular, the same argument proves the following lemma.
        \begin{lemma}\label{rem:Prym sheaf}
        There is an isomorphism of torsion free abelian groups
                \begin{align*} \label{prym sheaf}
            H^1_c(\sm^\circ, \cK|_{\sm^\circ})_{\textrm{tf}} &\cong \ker (H^1_c(\ts^\circ,\Z)\xrightarrow{\Nm^\circ} H^1_c(\Sigma^\circ,\Z))\\
            &\cong \ker (H_1(\ts^\circ,\Z)\xrightarrow{\Nm^\circ} H_1(\Sigma^\circ,\Z))\\
            &\cong H_{\Delta,SL(n)}.
        \end{align*}
        where $\Nm^\circ: H^1_c(\ts^\circ, \Z)\cong H^1_c(\sm^\circ, \overline{p}^\circ_*\Z) \to H^1_c(\sm^\circ,\Z)$ is induced by the trace map and $\overline{p}^\circ:= \overline{p}|_{\ts^\circ}: \ts^\circ \to \sm^\circ$. In particular, up to Tate twist, this becomes an isomorphism of $\Z$-mixed Hodge structures
        \begin{equation*}
            H^1_c(\sm^\circ, \cK|_{\sm^\circ})_{\textrm{tf}}(1) \cong H_{\Delta,SL(n)}
        \end{equation*}
        \end{lemma}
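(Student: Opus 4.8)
The plan is to run the computation just carried out for the compact curves $\sm$ and $\ts$ verbatim on the punctured curves $\sm^\circ$ and $\ts^\circ$; the only inputs that genuinely differ are the vanishing of $H^0_c$ on a connected non-compact curve and the precise Tate twist in Poincar\'e duality. First I would restrict the short exact sequence (\ref{prymses}) to the open subset $\sm^\circ \subset \sm$. Because $\ts^\circ = \overline{p}^{-1}(\sm^\circ)$, restriction commutes with the finite pushforward $\overline{p}_*$, so $(\overline{p}_*\underline{\Z})|_{\sm^\circ} = \overline{p}^\circ_*\underline{\Z}$ and the trace map restricts to the trace of $\overline{p}^\circ$, giving
$$0\to \cK|_{\sm^\circ}\to \overline{p}^\circ_*\underline{\Z}\xrightarrow{\Tr}\underline{\Z}\to 0$$
on $\sm^\circ$. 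Since $\overline{p}^\circ$ is finite, $\overline{p}^\circ_*$ is exact and $H^i_c(\sm^\circ,\overline{p}^\circ_*\underline{\Z})\cong H^i_c(\ts^\circ,\Z)$, so the long exact sequence in compactly supported cohomology reads
$$\cdots\to H^0_c(\ts^\circ,\Z)\to H^0_c(\sm^\circ,\Z)\to H^1_c(\sm^\circ,\cK|_{\sm^\circ})\to H^1_c(\ts^\circ,\Z)\xrightarrow{\Nm^\circ}H^1_c(\sm^\circ,\Z)\to\cdots$$
where, exactly as in the compact case, the exponential/determinant diagram (now over $\sm^\circ$) identifies the map induced by $\Tr$ with $\Nm^\circ$.

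Next I would use that $\ts^\circ$ and $\sm^\circ$ are connected and non-compact, so that $H^0_c(\ts^\circ,\Z)=H^0_c(\sm^\circ,\Z)=0$. This is where the punctured case is in fact cleaner than the compact one: there is no residual torsion coming from $\coker\!\big(H^0_c(\ts,\Z)\to H^0_c(\sm,\Z)\big)\cong \Z/n$. The long exact sequence therefore collapses to an isomorphism
$$H^1_c(\sm^\circ,\cK|_{\sm^\circ})\cong \ker\!\big(\Nm^\circ\colon H^1_c(\ts^\circ,\Z)\to H^1_c(\sm^\circ,\Z)\big).$$
Since $H^1_c$ of a smooth curve is torsion-free, taking the maximal torsion-free quotient $(-)_{\textrm{tf}}$ changes nothing here; the notation is retained only to keep the statement uniform with the compact computation.

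Finally, Poincar\'e–Lefschetz duality for the oriented surfaces $\ts^\circ$ and $\sm^\circ$ gives $H^1_c(\ts^\circ,\Z)\cong H_1(\ts^\circ,\Z)$ and $H^1_c(\sm^\circ,\Z)\cong H_1(\sm^\circ,\Z)$, carrying the trace-induced $\Nm^\circ$ to the homological pushforward $\Nm^\circ=\overline{p}^\circ_*$, whose kernel is precisely $H_{\Delta,SL(n)}$ by definition. All the maps involved are morphisms of $\Z$-mixed Hodge structures, because the trace map and duality are; duality is an isomorphism of mixed Hodge structures after a Tate twist by $(-1)$, i.e. $H^1_c(U,\Q)\cong H_1(U,\Q)(-1)$, which is exactly the twist $(1)$ appearing in the last displayed isomorphism. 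The main point to verify carefully is that the map on compactly supported cohomology induced by $\Tr$ really agrees with $\Nm^\circ$ and carries the correct Tate twist under duality; both are handled exactly as in the preceding compact argument, which is why this lemma follows by the same reasoning.
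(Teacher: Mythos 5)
Your proposal is correct and takes essentially the same approach as the paper: the paper's proof of this lemma is literally "the same argument" as its compact-curve computation (the long exact sequence of (\ref{prymses}) in compactly supported cohomology, identification of the trace-induced map with $\Nm^\circ$ via the exponential sequence, and Poincar\'e duality with the Tate twist), which is exactly what you run on $\sm^\circ$, together with the correct observation that the $H^0_c$-terms now vanish so no torsion cokernel intervenes. One minor remark: connectedness of $\ts^\circ$ is neither needed nor obviously automatic for every $b\in B^{ur}$; the vanishing $H^0_c(\ts^\circ,\Z)=0$ only requires that every connected component of $\ts^\circ$ be non-compact, which holds because each component of $\ts$ dominates $\sm$ and hence meets $\overline{p}^{-1}(D)$.
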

        Here we again denote by $\Nm^\circ$ the morphism induced by the trace map because it induces the norm map on their Jacobians, which can be seen by applying the same argument as above to the exponential sequence of the form (\ref{compactly-supported-exp-sequence}). Note also that $H^1_c(\sm^\circ, \cK|_{\sm^\circ})_{\textrm{tf}}$ is isomorphic to $H^1(\sm,D, \cK)_{\textrm{tf}} $.

\subsection{Deformation theory} \label{sec: deformation theory}
In this section, we show that the moduli space of diagonally framed Higgs bundle $\overline{\cM}
^\Delta(n,D)$ is symplectic. For the following discussion in this section, we fix a diagonally framed Higgs bundle $(E,\theta, \delta).$ Recall that we assume $b\in B^{ur}$ which means that the associated cameral curve is smooth, unramified over $D$, and has simple ramification. In particular, the residue of $\theta$ at $D$ is diagonal with distinct eigenvalues with respect to the framing $\delta.$ Denote by $\sm[\e]$ the fiber product $\sm\times \textrm{Spec}(\C[\e])$. 
\begin{definition}
An infinitesimal deformation of diagonally framed Higgs bundle is a triple $(E_\epsilon, \theta_\epsilon, \delta_\epsilon)$ such that
\begin{itemize}
    \item $E_\epsilon$ is a locally free sheaf on $\sm[\e]$,
    \item $\theta_\e\in H^0(\sm[\e],\fg'_{E_\e,D[\e]}\otimes p_{\sm}^*K_\sm(D)) $,
    \item $\delta_\e: E|_{D[\e]}\to \oplus_{i=1}^n \cO_{D[\e]}$ is an isomorphism,
    \item $(E_\e, \theta_\e, \delta_\e)|_{D\times 0}\cong (E, \theta, \delta) $,
\end{itemize}
where as before $\fg'_{E_\e,D[\e]}$ is defined as the kernel of the map $\fg_{E_\e}\twoheadrightarrow \fq \otimes \cO_{D[\e]}$ induced by $\delta_\e $ and $p_\sm : \sm[\e]\to \sm $ denotes the natural projection.  

\end{definition}

\begin{proposition}\label{tangentspace}
The space of infinitesimal deformations of a diagonally framed Higgs bundle $(E,\theta, \delta)$ is canonically isomorphic to $\bbH^1 (C^\bullet) $ where 
\begin{equation}
     C^\bullet:  C^0= \fg_E(-D)\xrightarrow{ [\cdot, \theta]} C^1 = \fg'_E\otimes K_\sm(D) 
\end{equation}
\end{proposition}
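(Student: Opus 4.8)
The plan is to compute the tangent space directly by a \v{C}ech cocycle calculation and then to reorganize the resulting data into a hypercohomology class of $C^\bullet$. First I would fix an open cover $\{U_\alpha\}$ of $\sm$ that trivializes $E$ and such that each point of $D$ lies in a unique chart containing no other point of $D$; in particular no double overlap $U_{\alpha\beta}$ meets $D$. Writing an infinitesimal deformation $(E_\e,\theta_\e,\delta_\e)$ in these trivializations, the bundle $E_\e$ is recorded by transition cochains $g_{\alpha\beta}(1+\e a_{\alpha\beta})$ with $a_{\alpha\beta}\in\Gamma(U_{\alpha\beta},\fg_E)$, the Higgs field by $\theta_\e|_{U_\alpha}=\theta+\e b_\alpha$ with $b_\alpha\in\Gamma(U_\alpha,\fg_E\otimes K_\sm(D))$, and the framing by $\delta_\e=\delta\,(1+\e\phi_\alpha)$ on the charts meeting $D$, with $\phi_\alpha\in\fg$ at the corresponding point.

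Next I would extract the conditions these data satisfy. The cocycle relation for $g_{\alpha\beta}(1+\e a_{\alpha\beta})$ says $\{a_{\alpha\beta}\}\in Z^1(\{U_\alpha\},\fg_E)$; compatibility of $\theta_\e$ with the transitions gives, to first order, $b_\beta-b_\alpha=[a_{\alpha\beta},\theta]$ on $U_{\alpha\beta}$, so $\{b_\alpha\}$ is a $0$-cochain whose \v{C}ech coboundary is $[\,\cdot\,,\theta]$ applied to $\{a_{\alpha\beta}\}$. The diagonal-framing constraint $\theta_\e\in\fg'_{E_\e,D[\e]}\otimes p_\sm^*K_\sm(D)$ unwinds, after conjugating the residue by $\delta_\e$ and expanding in $\e$, to the requirement that the $\fq$-component of $\Res_D b_\alpha+[\phi_\alpha,\Res_D\theta]$ vanishes. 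Isomorphisms of deformations restricting to the identity act by $\eta_\alpha\in\Gamma(U_\alpha,\fg_E)$ via $a_{\alpha\beta}\mapsto a_{\alpha\beta}+\eta_\beta-\eta_\alpha$, $b_\alpha\mapsto b_\alpha+[\eta_\alpha,\theta]$ and, crucially, $\phi_\alpha\mapsto\phi_\alpha-\eta_\alpha|_D$, since the framing must be preserved.

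The key step uses that $b\in B^{ur}$: the residue $\Res_D\theta$ is diagonal with distinct eigenvalues, so $\mathrm{ad}_{\Res_D\theta}\colon\fq\to\fq$ is an isomorphism. Because $[\ft,\ft]=0$ and $[\fq,\ft]\subset\fq$, this lets me solve uniquely for $\phi_\alpha^{\fq}$ in terms of the $\fq$-part of $\Res_D b_\alpha$, so the off-diagonal framing direction is rigidly determined by the residue of the Higgs deformation; equivalently, a gauge adjustment with prescribed $\fq$-part at $D$ moves $\{b_\alpha\}$ into $\fg'_E\otimes K_\sm(D)=C^1$ (diagonal residue), the framing absorbing exactly the anti-diagonal part. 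What remains is a bundle cocycle $\{a_{\alpha\beta}\}$ together with the diagonal framing freedom $\phi_\alpha^{\ft}$ and the gauge $\eta_\alpha$ acting on both; via the quasi-isomorphism $\fg_E(-D)\simeq[\fg_E\xrightarrow{\mathrm{res}}i_*(\fg_E|_D)]$ given by restriction, this data is precisely a \v{C}ech hypercohomology $1$-cocycle of $[\fg_E\to \fg_E|_D]$, hence of $\fg_E(-D)=C^0$, and the conditions above assemble the pair into a $1$-cocycle of $C^\bullet$ with gauge passing to its coboundaries. This yields the canonical isomorphism with $\bbH^1(C^\bullet)$.

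I expect the main obstacle to be the bookkeeping around the framing: proving that, once the invertibility of $\mathrm{ad}_{\Res_D\theta}$ on $\fq$ is used, the framing contributes neither more nor fewer directions than the $(-D)$-twist in degree $0$ and the diagonal condition in degree $1$ — i.e.\ that there is no over- or under-counting and that the identification is independent of the chosen cover and trivializations. As a consistency check I would verify $\bbH^0(C^\bullet)=\ker([\,\cdot\,,\theta]\colon\fg_E(-D)\to\fg'_E\otimes K_\sm(D))=0$: a global section of $\fg_E(-D)$ commuting with $\theta$ vanishes on $D$, hence vanishes identically by Lemma \ref{rigidlemma}, recovering the absence of infinitesimal automorphisms of Corollary \ref{no_automorphism} and confirming that $C^0=\fg_E(-D)$ is the correct gauge term.
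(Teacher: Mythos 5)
Your proof is correct, but it takes a genuinely different route from the paper's. The paper's proof is essentially a two-line reduction: it quotes Markman \cite{markman1994spectral} for the fact that infinitesimal deformations of \emph{framed} Higgs bundles are classified by $\bbH^1(C_F^\bullet)$ with $C_F^\bullet=[\fg_E(-D)\xrightarrow{[\cdot,\theta]}\fg_E\otimes K_\sm(D)]$ (the framing having already been normalized to be constant, so that the degree-zero term is the gauge sheaf fixing the framing), and then observes that the diagonal-framing condition is precisely the requirement that the degree-one cochain $\dot{\varphi}_\alpha$ lie in $\fg'_E\otimes K_\sm(D)$, which replaces $C_F^1$ by $C^1$. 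You instead re-derive the framed cocycle description from scratch, keep the framing deformation $\phi_\alpha$ as explicit data, and use the $B^{ur}$ hypothesis — invertibility of $\mathrm{ad}_{\Res_D\theta}$ on $\fq$ — to show the off-diagonal framing directions are redundant. This buys a self-contained argument that makes visible \emph{why} the framing contributes exactly the $(-D)$-twist in degree zero, at the cost of invoking the distinct-eigenvalue assumption, which the paper's proof of this particular proposition never uses (it enters the paper only later, in Proposition \ref{deformationcomplex}, to prove self-duality of $C^\bullet$). One bookkeeping slip, harmless but worth fixing: after your normalization the residual gauge is $\fg'_E$ and the residual framing freedom is $\ft_D=i_*\ft$, so the resolution you want is $0\to\fg_E(-D)\to\fg'_E\to\ft_D\to 0$, i.e.\ $\fg_E(-D)\simeq[\fg'_E\to\ft_D]$, not $[\fg_E\to i_*(\fg_E|_D)]$; with that correction your triple $(a_{\alpha\beta},b_\alpha,\phi_\alpha^{\ft})$ is literally a \v{C}ech $1$-cocycle of the total complex $[\fg'_E\to(\fg'_E\otimes K_\sm(D))\oplus\ft_D]$, gauge goes to coboundaries, and this total complex is quasi-isomorphic to $C^\bullet$ because the quotient complex $[\ft_D\xrightarrow{\,\sim\,}\ft_D]$ is acyclic — exactly the identification you intend.
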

\begin{proof}
Recall that \cite{markman1994spectral} the space of infinitesimal deformation of a framed Higgs bundles $(E, \theta, \delta)$ is canonically isomorphic to $\bbH^1(C^\bullet_F)$ where 
\begin{equation}
    C^\bullet_F:  C_F^0= \fg_E(-D) \xrightarrow{ [\cdot, \theta]} C^1_F=\fg_E\otimes K_\sm(D) .
\end{equation}
Choose a \v{C}ech cover $U:= \{U_\alpha\}$ of $\sm$ which induces cover $U[\e]:=( U_\alpha[\epsilon])$ of $\sm[\epsilon]$. Imposing further the condition that the Higgs bundles are diagonally framed 
implies that $\theta\in H^0(\sm,\fg_E' \otimes K_{\sm}(D)) \subset H^0(\sm, \fg_E \otimes K_{\sm}(D))$. Suppose that a 1-cocycle ($\dot{f}_{\alpha\beta}, \dot{\varphi}_\alpha)$ in $Z^1(U[\e], C^\bullet_F) $  represents an infinitesimal deformation of $(E, \theta, \delta)$ as framed Higgs bundles where $\dot{f}_{\alpha\beta}\in H^0(U_{\alpha\beta}[\epsilon],\fg_E(-D)) $ and $\dot{\varphi_{\alpha}}\in H^0(U_\alpha[\e],\fg_E\otimes p^*_{\sm}K_{\sm}(D))$. Then ($\dot{f}_{\alpha\beta}, \dot{\varphi}_\alpha)$ is an infinitesimal deformation of $(E, \theta, \delta)$ as diagonally framed Higgs bundles if and only if $\dot{\varphi}_\alpha\in H^0(U_\alpha[\e],\fg_E'\otimes p^*_{\sm}K_{\sm}(D))$. Hence, it follows that $\bbH^1(C^\bullet)$ parametrizes the infinitesimal deformations of diagonally framed Higgs bundles.

\end{proof}

Recall that the Serre duality says that $\bbH^1( C^\bullet) \xrightarrow{\sim} (\bbH^1(\check{C}^\bullet) )^\vee $ where 
\begin{equation}
    \check{C}^\bullet : (\fg_E')^\vee \otimes \cO_{\sm}(-D) \xrightarrow{[-,\theta]^t} \fg_E^\vee\otimes K_{\sm}(D)
\end{equation}
is the Serre dual to $C^\bullet$. Combining the Serre duality isomorphism with the isomorphism in the next proposition, we get a non-degenerate skew-symmetric pairing on $\bbH^1(C^\bullet).$

\begin{proposition}\label{deformationcomplex}
There is a canonical isomorphism
\begin{equation}
    \bbH ^1( \check{C}^\bullet) \cong \bbH^1 (C^\bullet). 
\end{equation}
\end{proposition}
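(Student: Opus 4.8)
The plan is to compute $\check{C}^\bullet$ explicitly via the Killing form, realize $C^\bullet$ as a subcomplex of the resulting complex, and check that the quotient is acyclic, so that the two hypercohomologies agree degree by degree.

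First I would use the Killing form on $\fg=\mathfrak{sl}_n$ to identify $\fg_E\cong \fg_E^\vee$, and—this is the main bookkeeping step—to compute $(\fg'_E)^\vee\cong \fg''_E(D)$. Working in a local trivialization near $p\in D$ with coordinate $t$ and the orthogonal splitting $\fg=\ft\oplus\fq$, one has $\fg'_E=\ft\text{-part}\oplus\, t\,\fq\text{-part}$ and $\fg''_E=t\,\ft\text{-part}\oplus\,\fq\text{-part}$; dualizing $\fg'_E$ with the self-dual Killing pairing sends the $t\,\fq$-generators to $t^{-1}\fq$-generators, yielding $\ft\text{-part}\oplus\, t^{-1}\fq\text{-part}=\fg''_E(D)$. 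Since $\operatorname{ad}_\theta$ is anti-self-adjoint for the Killing form ($\kappa([x,\theta],y)=-\kappa(x,[y,\theta])$), the transpose differential $[-,\theta]^t$ becomes $-[\cdot,\theta]$ after these identifications, the sign being irrelevant for cohomology. Hence
\[
\check{C}^\bullet \cong D^\bullet := \bigl[\,\fg''_E \xrightarrow{\;[\cdot,\theta]\;} \fg_E\otimes K_\sm(D)\,\bigr].
\]

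Next I would note that the natural inclusions $\fg_E(-D)\hookrightarrow \fg''_E$ (vanishing at $D$ implies vanishing $\ft$-part at $D$) and $\fg'_E\otimes K_\sm(D)\hookrightarrow \fg_E\otimes K_\sm(D)$ assemble into an injective morphism of complexes $C^\bullet\hookrightarrow D^\bullet$. Its cokernel is the two-term complex of torsion sheaves supported on $D$,
\[
Q^\bullet = \bigl[\,\fq_D \xrightarrow{\;\bar d\;} \fq_D\otimes K_\sm(D)\,\bigr],
\]
where the degree-$0$ cokernel is $\fg''_E/\fg_E(-D)\cong i_*\fq=\fq_D$, the degree-$1$ cokernel is $(\fg_E/\fg'_E)\otimes K_\sm(D)\cong \fq_D\otimes K_\sm(D)$, and $\bar d$ is induced by $[\cdot,\theta]$.

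The crux is to prove $\bar d$ is an isomorphism. Trivializing $K_\sm(D)$ at $p$ by the residue, $\bar d$ is the restriction to $\fq$ of $\operatorname{ad}_{\Res_p\theta}$. Because $b\in B^{ur}$, the residue $\Res_p\theta$ is a regular diagonal element, so $\operatorname{ad}_{\Res_p\theta}$ annihilates $\ft$ and scales the $(i,j)$-entry of $\fq$ by $\lambda_i-\lambda_j\neq 0$; thus $\bar d$ is invertible and $Q^\bullet$ is acyclic. The short exact sequence $0\to C^\bullet\to D^\bullet\to Q^\bullet\to 0$ and the vanishing $\bbH^\bullet(Q^\bullet)=0$ then give, via the long exact sequence in hypercohomology, $\bbH^1(C^\bullet)\cong\bbH^1(D^\bullet)\cong\bbH^1(\check{C}^\bullet)$, which is the desired canonical isomorphism. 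The main obstacle is the duality identification $(\fg'_E)^\vee\cong\fg''_E(D)$—matching the twists by $\cO(\pm D)$ and confirming the differential is preserved up to sign—together with the invertibility of $\bar d$, which is exactly where the distinct-eigenvalue hypothesis $b\in B^{ur}$ is used.
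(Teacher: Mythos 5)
Your proof is correct and follows essentially the same route as the paper: your auxiliary complex $D^\bullet=[\,\fg''_E\xrightarrow{[\cdot,\theta]}\fg_E\otimes K_\sm(D)\,]$ is exactly the paper's $C_1^\bullet$, your Killing-form identification $(\fg'_E)^\vee\cong\fg''_E(D)$ is the paper's isomorphism $r_0$ (the paper uses the trace pairing, which is proportional to the Killing form on $\mathfrak{sl}_n$), and the acyclicity of the cokernel $\fq_D\to\fq_D\otimes K_\sm(D)$ via regularity of the residue at $D$ is precisely the paper's key lemma. The only differences are cosmetic: you perform the duality identification before the quasi-isomorphism rather than after, and you make explicit the sign $[-,\theta]^t=-[\cdot,\theta]$, which the paper leaves implicit.
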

\begin{proof}
We consider an auxiliary complex\footnote{The complex $C_1^\bullet$ here coincides with the complex $"\mathscr{C}_\bullet^\Delta"$ that is defined in \cite[Section 5]{biswas2018symplectic}.  } \begin{center}
    $C_1^\bullet: \fg''_E \xrightarrow{[\cdot, \theta]} \fg_E\otimes K_\sm(D)$
\end{center}
and show that this is quasi-isomorphic to $C^\bullet$ and isomorphic to $\check{C}^\bullet$. First, consider the morphism of complexes $t: C^\bullet \to C_1^\bullet$:
\begin{equation*}
    \begin{tikzcd}
    C^\bullet\arrow[d,"t"] &  \fg_E\otimes \cO_{\sm}(-D)\arrow[r]\arrow[d, "t_0"] & \fg'_E \otimes K_{\sm}(D)\arrow[d, "t_1"]    \\
    C_1^\bullet&  \fg''_E  \arrow[r]& \fg_E \otimes K_{\sm}(D)
    \end{tikzcd}
\end{equation*}
Both $t_0$ and $t_1$ are injective. The diagram clearly commutes away from $D$, hence commutes everywhere. In particular, around $D$, choose an open subset $U$ that trivializes all the bundles, we see that the maps become the natural maps
\begin{equation*}
    \begin{tikzcd}
       \ft(-D)\oplus \fq(-D) \arrow[r]\arrow[d, "t_0|_U"] &  (\ft \oplus \fq(-D ))\otimes K_\sm(D)|_U\arrow[d, "t_1|_U"] \\
       \ft(-D)\oplus \fq \arrow[r]& (\ft \oplus \fq )\otimes K_\sm(D)|_U 
    \end{tikzcd}
\end{equation*}
where we abuse notations by denoting $\ft$ and $\fq$ the trivial bundles with fibers $\ft$ and $\fq$, respectively.
The cokernel of $t$ is 
\begin{equation*}
    \coker(t) :  \fq_D \xrightarrow{[\cdot, \theta]|_D } \fq_D\otimes K_\sm(D) 
\end{equation*}
\begin{lemma}
\begin{equation*}
    \bbH^i(\coker(t) ) = 0 , \textrm{  for all }i. 
\end{equation*}
\end{lemma}
\begin{proof}
Since the complex is supported at $D$, it reduces to a complex of $\C$-vector spaces. Assume $D$ consists of a single point for simplicity. The complex reduces to 
\begin{equation*}
     \fq \xrightarrow{[\cdot, \theta]|_D} \fq.
\end{equation*}
Recall our assumption that the associated spectral curve is unramified over $D$. The restriction $\theta|_D$ of the Higgs field to $D$ is a diagonal matrix with distinct eigenvalues with respect to $\delta$. In particular, $\theta|_D$ is regular and semisimple, so its centralizer  \linebreak $Z_\fg(\theta|_D) = \{x \in g | [x, \theta|_D]=0\}$ is a Cartan subalgebra and coincides with $\ft$. Since $\ker([\cdot, \theta]|_D:\fg\to \fg) = Z_{\fg}(\theta|_D) =\ft $ which intersects $\fq$ trivially, it follows that the restricted map $([\cdot, \theta]|_D)|_{\fq}: \fq\to \fq$ is an isomorphism. Hence, all the cohomologies of the complex $\coker(t)$ must be zero.

\end{proof}
The long exact sequence induced by $0\to C^\bullet \to C^\bullet_1\to \coker(t) \to 0$ is:
\begin{align*}
    0&\to \bbH^0 (C^\bullet) \to \bbH^0(C^\bullet_1) \to \bbH^0(\coker(t)) =0\\
    &\to \bbH^1 (C^\bullet) \to \bbH^1(C^\bullet_1) \to \bbH^1(\coker(t))=0 \to ...
\end{align*}
and hence $\bbH^0(C^\bullet)\cong \bbH^0(C^\bullet_1)$ and $\bbH^1(C^\bullet) \cong \bbH^1(C^\bullet_1)$.  

Finally, we claim that there is an isomorphism of complexes $C^\bullet_1 \cong \check{C}^\bullet$
\begin{equation} \label{isom_of_complex}
    \begin{tikzcd}
    C_1^\bullet \arrow[d,"\cong"]&   \fg''_E\arrow[r]\arrow[d, "r_0"] & \fg_E \otimes K_{\sm}(D)\arrow[d, "r_1"]    \\
    \check{C}^\bullet&  (\fg'_E)^\vee\otimes \cO_\sm(-D) \arrow[r]& (\fg_E)^\vee \otimes K_{\sm}(D)
    \end{tikzcd}
\end{equation}
The map $r_0$ is defined as follows. Consider the  composition of morphisms 
\begin{equation}\label{composition of morphisms}
    \fg''_E \hookrightarrow \fg_E \xrightarrow{\sim} \fg_E^\vee \hookrightarrow (\fg'_E)^\vee \to (\fg'_E)^\vee\otimes \cO_D.
\end{equation}
where the isomorphism $\fg_E \to \fg_E^\vee$ is given by the trace pairing. If we know that this composition is zero, then we will get a map
\begin{equation*}
    r_0: \fg''_E \to \ker((\fg'_E)^\vee \to (\fg'_E)^\vee\otimes \cO_D) = (\fg'_E)^\vee \otimes \cO_\sm(-D).
\end{equation*}
Away from $D$, the map (\ref{composition of morphisms}) is clearly zero. Around $D$, we can find an open subset $U$ such that each sheaf in the composition is trivial, then 
\begin{equation*}
    \begin{tikzcd}
    \fg''_E|_U \arrow[r]\arrow[d,"\cong"]&\fg_E|_U\arrow[r]\arrow[d,"\cong"]
    &\fg_E^\vee|_U \arrow[r]\arrow[d,"\cong"]&(\fg'_E)^\vee|_U \arrow[r]\arrow[d,"\cong"] &((\fg'_E)^\vee\otimes \cO_D)|_U\arrow[d,"\cong"]\\
    \ft(-D)\oplus \fq \arrow[r]& \ft\oplus \fq \arrow[r]& \ft^\vee\oplus \fq^\vee \arrow[r] & \ft^\vee\oplus \fq^\vee(D) \arrow[r]&(\ft^\vee\otimes \cO_D) \oplus (\fq ^\vee \otimes \cO_D(D))
     \end{tikzcd}
\end{equation*}
 Each component of the bottom row clearly composes to zero, hence the whole composition is zero. Locally over $U$, the map $r_0:\fg_E'' \to \left( \fg_E' \right)^\vee \otimes \cO_\Sigma(-D)$ is induced by the trace pairing: $\ft\xrightarrow{\sim} \ft^\vee$ and $\fq\xrightarrow{\sim} \fq^\vee$,
 \begin{equation*}
     r_0|_U: \fg_E''|_U\cong \ft(-D)\oplus \fq \xrightarrow{\sim} \ft^\vee (-D) \oplus \fq^\vee \cong (\fg_E')^\vee\otimes \cO_\sm(-D) |_U
 \end{equation*}
which is an isomorphism. Away from $D$, it is clear that $r_0$ is also an isomorphism. It follows that $r_0$ is an isomorphism everywhere. 

The commutativity can be argued in the same way. Again, the diagram commutes away from $D$. Around $D$, the bundles trivialize and we get the diagram
\begin{equation*}
    \begin{tikzcd}
    \ft(-D) \oplus \fq \arrow[r]\arrow[d]& (\ft\oplus \fq) \otimes K_\sm(D)|_U\arrow[d]\\
    \ft^\vee (-D) \oplus \fq^\vee \arrow[r]&(\ft^\vee\oplus \fq^\vee) \otimes K_\sm(D)|_U
    \end{tikzcd}
\end{equation*}
which commutes on the nose. 

All of this together gives 
\begin{equation}
    \bbH^1(C^\bullet) \cong \bbH^1(C^\bullet_1) \cong \bbH^1(\check{C}^\bullet) .
\end{equation}
as claimed.
\end{proof}

Let $\omega_\Delta:\bbH^1(C^\bullet) \times \bbH^1(C^\bullet) \to \C$ be the non-degenerate skew-symmetric pairing induced by Serre duality and the isomorphism in Proposition \ref{deformationcomplex}.

\begin{proposition}
The nondegenerate 2-form $\omega_\Delta$ is closed. \label{symplecticform}
\end{proposition}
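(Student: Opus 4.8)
The plan is to deduce closedness from the ambient cotangent-bundle structure of $\cM_F(n,D)$ rather than by computing $d\omega_\Delta$ directly on triple overlaps. Recall (as used in Section 1, following \cite{markman1994spectral}) that $\cM_F(n,D)$ is realized as the cotangent bundle $T^*\cN_F$ of the moduli space $\cN_F$ of framed bundles: the cotangent fibre at $(E,\delta)$ is $H^1(\sm,\fg_E(-D))^\vee\cong H^0(\sm,\fg_E\otimes K_\sm(D))$ via Serre duality and the trace pairing, which is precisely the space of framed Higgs fields. Hence $\cM_F(n,D)$ carries the canonical Liouville one-form $\lambda_F$ and the exact symplectic form $\omega_F=d\lambda_F$, which is closed for free. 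In deformation-theoretic terms $\omega_F$ is the Serre-duality pairing on $\bbH^1(C_F^\bullet)$, where $C_F^\bullet\colon\fg_E(-D)\to\fg_E\otimes K_\sm(D)$ is Serre self-dual through the trace pairing $\fg_E\cong\fg_E^\vee$.

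First I would observe that $\overline{\cM}^\Delta(n,D)$ sits inside $\cM_F(n,D)$ as the locus where $\theta\in H^0(\sm,\fg'_E\otimes K_\sm(D))$, i.e.\ as the subvariety of $T^*\cN_F$ which over each $(E,\delta)$ is the linear subspace $H^0(\sm,\fg'_E\otimes K_\sm(D))\subset H^0(\sm,\fg_E\otimes K_\sm(D))$ of the cotangent fibre. Writing $\iota\colon\overline{\cM}^\Delta(n,D)\hookrightarrow\cM_F(n,D)$, the induced map on tangent spaces is exactly the map $\bbH^1(C^\bullet)\to\bbH^1(C_F^\bullet)$ coming from the morphism of complexes that is the identity in degree $0$ and the inclusion $\fg'_E\otimes K_\sm(D)\hookrightarrow\fg_E\otimes K_\sm(D)$ in degree $1$.

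The crux is the identity $\omega_\Delta=\iota^*\omega_F$. Granting it, closedness is immediate, $d\omega_\Delta=\iota^*(d\omega_F)=0$ (equivalently $\omega_\Delta=d(\iota^*\lambda_F)$ is exact), and combined with the nondegeneracy already established over $B^{ur}$ through Propositions \ref{tangentspace} and \ref{deformationcomplex} this upgrades $\omega_\Delta$ to a holomorphic symplectic form. To prove the identity I would compare the two Serre-duality pairings through the morphism $C^\bullet\to C_F^\bullet$ and its Serre dual $\check C_F^\bullet\to\check C^\bullet$: naturality of Serre duality on hypercohomology produces the comparison square, and the one thing that must be checked is that the trace self-duality $\check C_F^\bullet\cong C_F^\bullet$ of the framed complex is carried, under these maps, to the self-duality $\check C^\bullet\cong C_1^\bullet\cong C^\bullet$ constructed in Proposition \ref{deformationcomplex}.

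I expect this compatibility near $D$ to be the main obstacle, since it is exactly where $C^\bullet$ and $C_F^\bullet$ differ and where the splitting $\fg=\ft\oplus\fq$ enters. As in the proof of Proposition \ref{deformationcomplex}, I would verify it locally: away from $D$ all the sheaves coincide and the claim is the standard self-duality of the Higgs deformation complex, while on a trivializing neighbourhood of each point of $D$ one checks that the local isomorphisms $\ft\xrightarrow{\sim}\ft^\vee$ and $\fq\xrightarrow{\sim}\fq^\vee$ defining $r_0$ in \eqref{isom_of_complex} agree with those induced from the self-duality of $C_F^\bullet$. Once this local matching is in place the global identity $\omega_\Delta=\iota^*\omega_F$ follows, and closedness is then a formal consequence; should the canonical identification prove delicate, the same local analysis instead yields $\omega_\Delta=d(\iota^*\lambda_F)$ directly, from which closedness is automatic.
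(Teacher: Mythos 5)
Your proposal is correct and follows essentially the same route as the paper: both arguments reduce closedness to the identity $\omega_\Delta=\iota^*\omega_F$, proved by comparing the Serre-duality pairings through the inclusion of complexes $C^\bullet\hookrightarrow C_F^\bullet$ and the self-dualities $\check C_F^\bullet\cong C_F^\bullet$, $\check C^\bullet\cong C_1^\bullet\cong C^\bullet$, with the commutativity checked locally near $D$ using the splitting $\fg=\ft\oplus\fq$. The only cosmetic difference is that you justify closedness of $\omega_F$ via the Liouville form on $T^*\cN_F$, whereas the paper simply cites the known symplectic structure on $\cM_F(n,D)$ from the literature.
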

\begin{proof}
Consider the following inclusion of complexes $C^\bullet \xhookrightarrow{u} C^\bullet_F$:
\begin{equation*}
    \begin{tikzcd}
    C^\bullet\arrow[d,"u"] &  \fg_E\otimes \cO_{\sm}(-D)\arrow[r]\arrow[d, "u_0"] & \fg_E' \otimes K_{\sm}(D)\arrow[d, "u_1"] \\
    C_{F}^\bullet&  \fg_E \otimes \cO_{\sm}(-D) \arrow[r]& \fg_E \otimes K_{\sm}(D)
    \end{tikzcd}
\end{equation*}
where as before $C^\bullet_F$ is the complex whose first hypercohomology controls the deformations of the framed Higgs bundle $(E, \theta, \delta)$. By the same argument as in Proposition \ref{deformationcomplex}, since $u_0$ is isomorphic and $u_1$ is injective whose cokernel has zero-dimensional support and concentrated in degree one, we have an injection
\begin{equation*}
    i: \bbH^1(C^\bullet)\hookrightarrow \bbH^1(C^\bullet_{F}).
\end{equation*}
Note that Serre duality induces a non-degenerate bilinear pairing on $\bbH^1(C^\bullet_F)$ which corresponds to the well-known symplectic form $\omega_F$ on $\cM_F(n,D)$, see \cite{biswas2018symplectic}. We claim that the pairing $\omega_\Delta$ is obtained by restricting $\omega_F$ to $\bbH^1(C^\bullet) \subset \bbH^1(C^\bullet_F)$. In other words, the corresponding 2-form on $\overline{\cM}^\Delta(n,D)$ is obtained by pulling back the symplectic form $\omega_F$ on $\cM_F(n,D)$. It then follows that $\omega_F$ is closed as well. 

Our claim is equivalent to the commutativity of the following diagram: 
\begin{equation*}
    \begin{tikzcd}
    \bbH^1(C_F^\bullet)\arrow[r]& \bbH^1(\check{C}^\bullet_F)^\vee\arrow[rr]& &\bbH^1(C_F^\bullet)^\vee \arrow[d,two heads,"i^\vee"]\\
    \bbH^1(C^\bullet)\arrow[r]\arrow[u,hook,"i"] &\bbH^1(\check{C}^\bullet) ^\vee\arrow[r]\arrow[u] & \bbH^1(C_1^\bullet)^\vee \arrow[r]& \bbH^1(C^\bullet)^\vee
    \end{tikzcd}
\end{equation*}
The left square diagram commutes  by the functoriality of Serre duality. Then it remains to check the commutativity of the right square diagram. This follows from the commutativity of the diagram of complexes:
\begin{equation*}
\begin{tikzcd}
\check{C}^\bullet_F \arrow[d]&& C^\bullet_F\arrow[ll]\\
\check{C}^\bullet & C_1^\bullet\arrow[l] & C^\bullet  \arrow[l]\arrow[u]
\end{tikzcd}
\end{equation*}
Away from $D$, the diagram clearly commutes. Around $D$, we again trivialize the bundles and the diagram looks like 
\begin{equation*}
    \begin{tikzcd}
    \fbox{$\check{C}^\bullet_F: \ft^\vee(-D) \oplus \fq^\vee (-D) \to \mathcal{L}(\ft^\vee \oplus \fq^\vee)  $}\arrow[d]&\fbox{$C^\bullet_F: \ft(-D)\oplus \fq(-D) \to \mathcal{L}(\ft\oplus \fq) $}\arrow[l] \\
    \fbox{$\check{C}^\bullet: \ft^\vee(-D) \oplus \fq^\vee  \to \mathcal{L}(\ft^\vee \oplus \fq^\vee)  $}&  \fbox{$C^\bullet:\ft(-D)\oplus \fq(-D) \to \mathcal{L}(\ft\oplus \fq(-D)) $} \arrow[d]\arrow[u]\\
    &\fbox{$C^\bullet_1: \ft(-D)\oplus \fq \to \mathcal{L}(\ft\oplus \fq) \arrow[lu] $}
    \end{tikzcd}
\end{equation*}
where we denote by $\mathcal{L}$ the operator sending a bundle $M\mapsto M\otimes K_{\sm}(D)$.

\end{proof}

\begin{proposition}\label{dimension}\label{dimension_count}
\quad 
\begin{enumerate}
    \item $\bbH^0 (C^\bullet)=\bbH^2(C^\bullet)=0$. In particular, the deformations of a diagonally framed Higgs bundle $(E,\theta,\delta)$ are unobstructed. 
    \item $\dim(\bbH^1(C^\bullet)) = (n^2-1) (2g-2 + d)  + (n-1)d .$ 
\end{enumerate}
\end{proposition}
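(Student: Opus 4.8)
The plan is to extract both statements from results already in hand: the rigidity Lemma \ref{rigidlemma} for the vanishing in degree $0$, the self-duality of the deformation complex (Proposition \ref{deformationcomplex}) together with Serre duality for the vanishing in degree $2$, and a bare Riemann--Roch count for the dimension of $\bbH^1$. First I would read off that a class in $\bbH^0(C^\bullet)$ is a global section $s \in H^0(\sm, \fg_E(-D))$ satisfying $[s,\theta]=0$; this is precisely an infinitesimal automorphism of $(E,\theta,\delta)$. Since $s$ is a section of $\fg_E(-D)$ it vanishes along $D$, and the bracket relation says that $s \colon E \to E$ commutes with $\theta$. Picking any point $x_0 \in D$ (the divisor is nonempty, as $d \geq 2$) and applying Lemma \ref{rigidlemma} to $f = s$ forces $s \equiv 0$, so $\bbH^0(C^\bullet)=0$.

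For the top degree I would use Serre duality in the form $\bbH^2(C^\bullet)^\vee \cong \bbH^0(\check C^\bullet)$, and then observe that the chain of quasi-isomorphisms $C^\bullet \simeq C_1^\bullet \cong \check C^\bullet$ built in the proof of Proposition \ref{deformationcomplex} induces isomorphisms on hypercohomology in \emph{every} degree: the relevant cokernel complex $\coker(t)$ there is acyclic, so the comparison is not limited to degree one. Hence $\bbH^0(\check C^\bullet) \cong \bbH^0(C^\bullet)=0$ and therefore $\bbH^2(C^\bullet)=0$. Since $\bbH^2(C^\bullet)$ is the obstruction space for the deformation problem, its vanishing yields unobstructedness.

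With $\bbH^0=\bbH^2=0$, the hypercohomology Euler characteristic of the two-term complex $C^\bullet$ reduces to
\begin{equation*}
    \dim \bbH^1(C^\bullet) = \chi(C^1) - \chi(C^0) = \chi\!\left(\fg'_E \otimes K_\sm(D)\right) - \chi\!\left(\fg_E(-D)\right).
\end{equation*}
The key inputs are the ranks and degrees. As $E$ is an $SL(n,\C)$-bundle, $\fg_E = \End_0(E)$ has rank $n^2-1$ and degree $0$, so $\fg_E(-D)$ has degree $-(n^2-1)d$. From the defining sequence $0 \to \fg'_E \to \fg_E \to \fq_D \to 0$ the torsion quotient $\fq_D = i_*\fq$ has length $(\dim\fq)\,d = (n^2-n)d$, whence $\fg'_E$ has rank $n^2-1$ and degree $-(n^2-n)d$, and $\fg'_E \otimes K_\sm(D)$ has degree $-(n^2-n)d + (n^2-1)(2g-2+d)$.

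Since both terms of $C^\bullet$ carry the same rank $n^2-1$, the $(1-g)$ contributions in Riemann--Roch $\chi(\cF)=\deg\cF+\rank\cF\,(1-g)$ cancel, and I am left with the difference of degrees:
\begin{equation*}
    \dim \bbH^1(C^\bullet) = (n^2-1)(2g-2+d) + \bigl[(n^2-1)-(n^2-n)\bigr]d = (n^2-1)(2g-2+d)+(n-1)d,
\end{equation*}
as claimed. The arithmetic is routine; the only genuinely delicate point is the $\bbH^2$-vanishing, where one must ensure that the self-duality of Proposition \ref{deformationcomplex} holds in all hypercohomological degrees rather than only in the middle degree for which it was literally stated.
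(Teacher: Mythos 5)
Your proof is correct and follows essentially the same route as the paper: rigidity via Lemma \ref{rigidlemma} for the degree-zero vanishing, Serre duality combined with the identifications $C^\bullet \simeq C_1^\bullet \cong \check{C}^\bullet$ (valid in all degrees since $\coker(t)$ is acyclic) for the degree-two vanishing, and an Euler-characteristic/Riemann--Roch count for $\dim \bbH^1(C^\bullet)$. The only cosmetic differences are that you apply Lemma \ref{rigidlemma} directly to an infinitesimal automorphism rather than routing through Corollary \ref{no_automorphism}, and you compute degrees from the defining sequence $0 \to \fg'_E \to \fg_E \to \fq_D \to 0$ instead of the paper's sequence $0 \to \fg_E(-D) \to \fg'_E \to i_*\ft \to 0$; both yield the same answer.
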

\begin{proof}
(1) Since morphisms between diagonally framed Higgs bundles are in particular morphisms between framed Higgs bundles, automorphisms of diagonally framed Higgs bundles are the same as automorphisms as framed Higgs bundles. So Corollary \ref{no_automorphism} implies that the diagonally framed Higgs bundles are rigid. Hence, $\bbH^0(C^\bullet)=0.$

On the other hand, again by Serre duality, 
\begin{equation*}
    \bbH^2(C^\bullet) \cong (\bbH^0(\check{C}^\bullet))^\vee \cong (\bbH^0(C_1^\bullet) )^\vee
\end{equation*}
where the second isomorphism comes from the isomorphism of the complex \eqref{isom_of_complex}. Finally, recall that from the long exact sequence above, we have that $\bbH^0(C^\bullet) \cong \bbH^0(C_1^\bullet)$ which vanishes as we just proved, hence $\bbH^2(C^\bullet) = 0.$ 

(2) By the definition of $\fg_E'$, we have a short exact sequence 
\begin{center}
    $0\to \fg_E \otimes \cO_{\sm}(-D) \to \fg_E'\to i_*\ft\to 0$
\end{center}
and thus
\begin{align*}
    \chi (\fg_E'\otimes K_{\sm}(D)) &= \chi(\fg_E') + (n^2-1) \deg(K_{\sm}(D))\\
    &= \chi(\ft\otimes \cO_D) + \chi(\fg_E(-D))+ (n^2-1) \deg(K_{\sm}(D))\\
    &= (n-1)d + \chi (\fg_E) + (n^2-1)\deg(\cO_{\sm}(-D))+ (n^2-1) \deg(K_{\sm}(D))\\
    &=  \chi (\fg_E) + (n-1)d+ (n^2-1) (2g-2).
\end{align*}
By (1), $\chi(C^\bullet)  = \bbH^1(C^\bullet)$, so 
\begin{align*}
    \bbH^1(C^\bullet) &= \chi (\fg_E' \otimes K_{\sm}(D)) - \chi(\fg_E(-D))\\
    &=  \chi (\fg_E) + (n-1)d+ (n^2-1) (2g-2)  - \chi(\fg_E) +(n^2-1)d\\
    &= (n^2-1)(2g-2+d) + (n-1)d .
\end{align*}
\end{proof}

\begin{rem}\label{half_dimension}
A direct computation by applying the Riemann-Roch theorem shows that 
\begin{align*}
    \textrm{dim}(B) = \sum_{i=2}^nh^0 (\sm, (K(D))^{\otimes i})  &=  (2g-2+d)\bigg( \frac{n(n+1)}{2}-1\bigg) +  (n-1)(1-g) \\
    &= \frac{1}{2}\big((n^2-1) (2g-2+d) + (n-1)d\big)\\
    &= \frac{1}{2}\textrm{dim}(\bbH^1(C^\bullet)).
\end{align*}
\end{rem}

\begin{proposition}\label{prop:symplectic}
The open subset $\overline{\cM}^{\Delta}(n,D)^{ur}$ of the moduli space $\overline{\cM}^{\Delta}(n,D)$ is a smooth quasi-projective variety of dimension $(n^2-1)(2g-2+d) +(n-1)d$. The tangent space $T_{[(E,\theta, \delta)]}\overline{\cM}^{\Delta}(n,D)^{ur}$ is canonically isomorphic to $\bbH^1(C^\bullet).$ Moreover, $\overline{\cM}^\Delta(n,D)^{ur}$ admits a symplectic form $\omega_\Delta$ which is the restriction of the symplectic form $\omega_F$ on $\cM_F(n,D).$
\end{proposition}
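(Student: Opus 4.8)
The plan is to obtain each clause of the proposition by assembling the deformation-theoretic results already established, since essentially all of the analytic content is contained in Propositions \ref{tangentspace}, \ref{deformationcomplex}, \ref{symplecticform} and \ref{dimension}; the present statement is then a matter of citing them in the right order and drawing the global geometric conclusions.

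First I would treat smoothness together with the dimension count. Fix a point $[(E,\theta,\delta)] \in \overline{\cM}^\Delta(n,D)^{ur}$. By Proposition \ref{dimension}(1) the obstruction space $\bbH^2(C^\bullet)$ vanishes, so the deformations of $(E,\theta,\delta)$ as a diagonally framed Higgs bundle are unobstructed. Since $\cM_F(n,D)$ is a fine moduli space and the diagonal-framing condition cuts out $\overline{\cM}^\Delta(n,D)$ as a subvariety whose deformation theory is governed by $C^\bullet$, the vanishing of $\bbH^2(C^\bullet)$ forces $\overline{\cM}^\Delta(n,D)^{ur}$ to be smooth at this point. The tangent space is canonically $\bbH^1(C^\bullet)$ by Proposition \ref{tangentspace}, and Proposition \ref{dimension}(2) computes $\dim \bbH^1(C^\bullet) = (n^2-1)(2g-2+d)+(n-1)d$; by smoothness this is also the dimension of the variety.

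Next, quasi-projectivity is immediate: $\overline{\cM}^\Delta(n,D)$ is a subvariety of the quasi-projective variety $\cM_F(n,D)$, and $\overline{\cM}^\Delta(n,D)^{ur} = \overline{h}_\Delta^{-1}(B^{ur})$ is the preimage of the open subset $B^{ur} \subset B$, hence an open subvariety of $\overline{\cM}^\Delta(n,D)$ and therefore itself quasi-projective. For the symplectic form, Proposition \ref{deformationcomplex} combined with Serre duality produces at each point a nondegenerate skew-symmetric pairing $\omega_\Delta$ on $\bbH^1(C^\bullet)$, and Proposition \ref{symplecticform} identifies $\omega_\Delta$ with the restriction of $\omega_F$ along the inclusion $\overline{\cM}^\Delta(n,D)^{ur} \hookrightarrow \cM_F(n,D)$ and establishes its closedness. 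Being the pullback of a global closed $2$-form and pointwise nondegenerate, $\omega_\Delta$ is a symplectic form, which completes the proof.

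The main conceptual obstacle---the nondegeneracy of the restricted form---has already been discharged in Proposition \ref{deformationcomplex}, since a priori the restriction of a symplectic form to a submanifold can degenerate; the chain $\bbH^1(C^\bullet)\cong \bbH^1(C_1^\bullet)\cong \bbH^1(\check{C}^\bullet)$ is precisely what identifies $\bbH^1(C^\bullet)$ with its own Serre dual and thereby keeps the restricted pairing nondegenerate. Given that, the only point requiring genuine care in the present argument is the passage from the pointwise vanishing of the obstruction $\bbH^2(C^\bullet)$ to the global smoothness of $\overline{\cM}^\Delta(n,D)^{ur}$, which is standard once one knows the moduli functor is controlled by the complex $C^\bullet$.
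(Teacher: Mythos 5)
Your proposal is correct and follows essentially the same route as the paper, whose own proof is simply to cite Propositions \ref{tangentspace}, \ref{symplecticform} and \ref{dimension} (with the restriction statement $\omega_\Delta = \omega_F|_{\overline{\cM}^\Delta(n,D)^{ur}}$ contained in the proof of Proposition \ref{symplecticform}). Your additional remarks on unobstructedness giving smoothness, openness giving quasi-projectivity, and nondegeneracy coming from the chain $\bbH^1(C^\bullet)\cong\bbH^1(C_1^\bullet)\cong\bbH^1(\check{C}^\bullet)$ just make explicit what the paper leaves implicit.
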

\begin{proof}
All the claims follow immediately from Proposition \ref{tangentspace}, \ref{symplecticform} and \ref{dimension}. The argument to show that $\omega_\Delta$ is a restriction of $\omega_F$ is contained in the proof of Proposition \ref{symplecticform}.
\end{proof}

\begin{proposition}\label{lagrangian}
The fiber of the map $\overline{h}_\Delta: \overline{\cM}^{\Delta}(n,D)^{ur} \to B^{ur}$ is Lagrangian with respect to $\omega_\Delta$. 
\end{proposition}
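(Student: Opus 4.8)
The plan is to combine the dimension count already established with a direct verification of isotropy. By Remark \ref{half_dimension} and Proposition \ref{prop:symplectic} we have
\[
\dim_{\C} B^{ur} \;=\; \tfrac{1}{2}\dim_{\C}\bbH^1(C^\bullet) \;=\; \tfrac{1}{2}\dim_{\C}\overline{\cM}^{\Delta}(n,D)^{ur}.
\]
Moreover, by the spectral correspondence (Proposition \ref{spectralcorrespondence}) the fibers of $h_\Delta$, and hence — through the finite étale quotient $q$ — of $\overline{h}_\Delta$, are smooth of dimension equal to $\dim\Prym(\ts^\circ_b,\sm^\circ)$, which by the displayed equality is exactly $\tfrac12\dim\overline{\cM}^{\Delta}(n,D)^{ur}$. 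Thus each fiber is a smooth submanifold of dimension half that of the ambient symplectic manifold, so it is Lagrangian as soon as it is isotropic. The whole problem therefore reduces to showing that $\omega_\Delta$ restricts to zero on the tangent space $T_{(E,\theta,\delta)}\big(\overline{h}_\Delta^{-1}(b)\big)=\ker\big(d\overline{h}_\Delta\big)\subset\bbH^1(C^\bullet)$.

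Next I would describe $d\overline{h}_\Delta$ at the level of the deformation complex. Since $\overline{h}_\Delta$ records the characteristic coefficients $(a_2,\dots,a_n)$ of $\theta$, its differential is induced by a morphism from $C^\bullet$ to the complex $\bigoplus_{i=2}^{n}K_\sm(D)^{\otimes i}$ placed in degree $1$, whose degree-$1$ component sends $\psi\in\fg'_E\otimes K_\sm(D)$ to the derivative of the characteristic coefficients in the direction $\psi$ (contraction of $\psi$ against the tautological invariants of $\theta$). Passing to first hypercohomology gives $d\overline{h}_\Delta:\bbH^1(C^\bullet)\to\bigoplus_{i=2}^{n}H^0(\sm,K_\sm(D)^{\otimes i})=T_bB^{ur}$, and the fiber tangent space is its kernel. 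Under the spectral correspondence this kernel is the tangent space to $\Prym(\ts^\circ_b,\sm^\circ)$, i.e. the \emph{vertical} deformations that move the framed line bundle on $\ts_b$ while fixing the spectral data; concretely these are represented by cocycles valued in the $\theta$-centralizer part of $\fg_E$ (restricting near $D$ to the Cartan $\ft$, matching the local trivializations used in Proposition \ref{deformationcomplex}), as opposed to the complementary part $\fq$ that changes the eigenvalues.

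The crux is the isotropy of $\ker(d\overline{h}_\Delta)$ for $\omega_\Delta$. I would argue Serre-theoretically: under the identification $\bbH^1(C^\bullet)\cong\bbH^1(\check{C}^\bullet)\cong\bbH^1(C^\bullet)^\vee$ of Proposition \ref{deformationcomplex}, the form $\omega_\Delta$ is the pairing induced by Serre duality, and the codifferential $d\overline{h}_\Delta^{\vee}\colon T_b^\vee B^{ur}\to\bbH^1(C^\bullet)^\vee$ has image the annihilator $\mathrm{Ann}\big(\ker d\overline{h}_\Delta\big)$. It then suffices to establish the single inclusion $\omega_\Delta^{\flat}\big(\ker d\overline{h}_\Delta\big)\subseteq\mathrm{Im}\big(d\overline{h}_\Delta^{\vee}\big)$, since by the dimension count both sides have dimension $\tfrac12\dim\bbH^1(C^\bullet)$ and the inclusion is forced to be an equality — precisely the Lagrangian condition. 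The inclusion reduces to a local computation around $D$ and over $\sm^\circ$ of the same flavour as the centralizer vanishing inside Proposition \ref{deformationcomplex}: Serre duality pairs the vertical ($\theta$-centralizing, $H^1(\cO_{\ts_b})$-type) classes only against the horizontal classes valued in $K_\sm(D)$, so the vertical--vertical pairing factors through the trace pairing restricted to commuting elements and vanishes. The main obstacle will be the bookkeeping of the framing and puncture data at $D$, so as to identify the vertical subspace with the tangent space of the \emph{Prym} rather than the full Jacobian of $\ts_b$ and to confirm that the degenerate semi-polarization does not disturb the off-diagonal vanishing; once this identification is in place the vanishing itself is formal from the trace pairing.
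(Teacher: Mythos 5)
Your first half coincides with the paper's own reduction: using Remark \ref{half_dimension} and Proposition \ref{prop:symplectic} to get $\dim B^{ur}=\tfrac12\dim\overline{\cM}^{\Delta}(n,D)^{ur}$, so that Lagrangian follows from isotropy, is exactly how the paper begins. Where you diverge is the isotropy step. The paper does not touch the deformation complex at this point: it invokes the Poisson-commutativity of the Hitchin Hamiltonians $(h_1,\dots,h_l)$ on $\cM_F(n,D)$ (citing Theorem 5.1 of Biswas--Logares--Pe\'{o}n-Nieto), notes that $\omega_\Delta$ is the restriction of $\omega_F$, so the restricted Hamiltonians still Poisson-commute on $\overline{\cM}^{\Delta}(n,D)^{ur}$, and concludes that the common level sets are isotropic by the standard argument. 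Your route --- a direct Serre-duality/cocycle computation of the vertical--vertical pairing --- is genuinely different and could in principle be completed, but as written it has a real gap precisely at the crux.

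The problematic sentence is the claim that ``the vertical--vertical pairing factors through the trace pairing restricted to commuting elements and vanishes.'' This is not a correct mechanism: the trace (Killing) form is \emph{non-degenerate}, not zero, on commuting elements --- on the Cartan $\ft$ the pairing $(t_1,t_2)\mapsto\tr(t_1t_2)$ does not vanish --- so no cancellation comes from commutativity. The reason a vertical--vertical pairing vanishes is instead one of cohomological degree: one must show that every tangent vector to the fiber of $\overline{h}_\Delta$ --- including the framing/torus directions of $\Prym(\ts_b^\circ,\sm^\circ)$ at the punctures, not only the $H^1(\ts_b,\cO_{\ts_b})$-type directions --- admits a hypercocycle representative of the form $(\dot f_{\alpha\beta},0)$, i.e.\ with vanishing component in $\fg'_E\otimes K_\sm(D)$ and with $\dot f_{\alpha\beta}$ centralizing $\theta$; only then does every term in the cup-product formula for $\omega_\Delta$ contain a factor that is identically zero. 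That representability statement is exactly what you defer as ``bookkeeping of the framing and puncture data at $D$,'' so the argument does not close as stated. Either supply that cocycle-level description of $\ker(d\overline{h}_\Delta)$, or take the paper's shortcut through Poisson-commutativity of the restricted Hitchin map, which bypasses the issue entirely.
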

\begin{proof}
Denote by $(h_1,...,h_l):=h\circ f_1:\cM_F(n,D)\to\cM(n,D)\to \C^l= B$ the composition of the forgetful map and the Hitchin map. According to \cite[Theorem 5.1]{biswas2018symplectic}, the functions $h_i$ Poisson-commute. Since the symplectic form $\omega_\Delta$ on $\overline{\cM}^\Delta(n,D)^{ur}$ is the restriction of the symplectic form $\omega_F$ on $\cM_F(n,D)$, the functions $h_i$ Poisson-commute as well when restricted to $\overline{\cM}^\Delta(n,D)^{ur}$. 

Since the dimension of the fiber $h^{-1}_\Delta(b)$ for $b\in B^{ur}$ is exactly $\frac{1}{2}\textrm{dim}(\overline{\cM}^\Delta(n,D)^{ur})$ by Remark (\ref{half_dimension}), it suffices to show that $\omega_\Delta$ restricted to $\overline{h}^{-1}_\Delta(b)$ vanishes to prove our claim. This follows from Poisson-commutativity of $(h_i)|_{\overline{\cM}^\Delta(n, D)^{ur}}$.

\end{proof}

\begin{proposition}
The tangent space $T_{[(E,\theta, \delta)]}\cM^{\Delta}(n,D)^{ur}$ is canonically isomorphic to $\bbH^1(C^\bullet)$. Moreover, the symplectic form $\omega_\Delta$ on $\overline{\cM}^{\Delta}(n,D)^{ur}$ is invariant under the $S_n^{|D|}$-action. In particular, $\omega_{\Delta}$ descends to a symplectic form $\omega'_\Delta$ on $\cM^{\Delta}(n,D)^{ur}.$ 
\end{proposition}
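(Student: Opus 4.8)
The plan is to deduce all three assertions from the freeness of the finite $S_n^{|D|}$-action, the only substantive input being the behaviour of the deformation complex $C^\bullet$ under a reordering of the framing.

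First I would record that, since the $S_n^{|D|}$-action on $\overline{\cM}^\Delta(n,D)^{ur}$ is finite and free (the freeness lemma established above), the quotient map $q\colon \overline{\cM}^\Delta(n,D)^{ur} \to \cM^\Delta(n,D)^{ur}$ is étale. Its differential therefore induces, at every point, an isomorphism $dq\colon T_{[(E,\theta,\delta)]}\overline{\cM}^\Delta(n,D)^{ur} \xrightarrow{\sim} T_{[(E,\theta,\delta)]}\cM^\Delta(n,D)^{ur}$. Composing with the canonical identification $T_{[(E,\theta,\delta)]}\overline{\cM}^\Delta(n,D)^{ur} \cong \bbH^1(C^\bullet)$ of Proposition \ref{prop:symplectic} yields the asserted isomorphism $T_{[(E,\theta,\delta)]}\cM^\Delta(n,D)^{ur} \cong \bbH^1(C^\bullet)$.

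Next I would establish the $S_n^{|D|}$-invariance of $\omega_\Delta$. For $\underline{\sigma} \in S_n^{|D|}$, the action sends $(E,\theta,\delta)$ to $(E,\theta,\underline{\sigma}\cdot\delta)$, fixing the pair $(E,\theta)$ and merely permuting the framing components. The key point is that the subsheaf $\fg'_E\subset\fg_E$ of endomorphisms that are diagonal with respect to $\delta$ at $D$ is insensitive to this reordering: since $Ad_{\underline{\sigma}\cdot\delta}(s) = \underline{\sigma}\, Ad_\delta(s)\,\underline{\sigma}^{-1}$ and conjugation by a permutation matrix preserves both the diagonal Cartan $\ft$ and its complement $\fq$, the subsheaves $\fg'_E$ and $\fg''_E$ are literally unchanged when $\delta$ is replaced by $\underline{\sigma}\cdot\delta$. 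Hence the complex $C^\bullet$, which depends on the framing only through $\fg'_E$, together with its Serre dual $\check{C}^\bullet$, coincide at the two points, so $d\underline{\sigma}$ acts as the identity under the identifications $T_{(E,\theta,\delta)}\overline{\cM}^\Delta \cong \bbH^1(C^\bullet) \cong T_{(E,\theta,\underline{\sigma}\cdot\delta)}\overline{\cM}^\Delta$. Since $\omega_\Delta$ is built from Serre duality together with the isomorphism $\bbH^1(\check{C}^\bullet)\cong\bbH^1(C^\bullet)$ of Proposition \ref{deformationcomplex}, which is induced by the trace pairing, and since the trace pairing on $\fg$ is invariant under conjugation by permutation matrices, this isomorphism, and hence the pairing $\omega_\Delta$ itself, is unaffected. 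Thus $\underline{\sigma}^*\omega_\Delta = \omega_\Delta$.

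Finally, I would descend the form: an $S_n^{|D|}$-invariant $2$-form on the étale cover $\overline{\cM}^\Delta(n,D)^{ur}$ descends uniquely to a $2$-form $\omega'_\Delta$ on $\cM^\Delta(n,D)^{ur}$ characterized by $q^*\omega'_\Delta = \omega_\Delta$; as $q$ is a local isomorphism and both closedness and nondegeneracy are local conditions, $\omega'_\Delta$ is symplectic because $\omega_\Delta$ is so by Proposition \ref{prop:symplectic}. I expect the main obstacle to lie in the invariance step, where one must verify that not merely $C^\bullet$ but the \emph{entire} Serre-duality package defining $\omega_\Delta$, in particular the quasi-isomorphism of Proposition \ref{deformationcomplex}, is left unchanged by reordering the framing; the tangent-space identification and the descent are then formal consequences of the freeness of a finite group action.
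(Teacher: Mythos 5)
Your proposal is correct and follows essentially the same route as the paper: the paper likewise observes that the cocycle description of deformations in $\bbH^1(C^\bullet)$ is unchanged by reordering the framing components (so $d\underline{\sigma}$ acts trivially under the identifications), uses that $dq$ is an isomorphism for the free finite action to transport the tangent-space identification, and deduces invariance and descent of $\omega_\Delta$ from the triviality of the $S_n^{|D|}$-action on $\bbH^1(C^\bullet)$. Your added verification that $\fg'_E$, $\fg''_E$ and the trace-pairing isomorphism of Proposition \ref{deformationcomplex} are literally unchanged under conjugation by permutation matrices is a useful explicit justification of what the paper asserts at the cocycle level.
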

\begin{proof}
In the proof of Proposition $\ref{tangentspace}$, given an infinitesimal deformation $(E_\e, \theta_\e, \delta_\e)$, the assignment of a 1-cocyle $(\dot{f}_{\alpha\beta}, \dot{\varphi}_\alpha)$ in $\bbH^1(C^\bullet)$ is independent of the reordering of components. That means we have the following commutative diagram
\begin{equation*}
    \begin{tikzcd}
    T_{[(E,\theta, \delta)]}\overline{\cM}^{\Delta}(n,D)^{ur}\arrow[r,"\sim"]\arrow[d,"d\underline{\sigma}"]&\bbH^1(C^\bullet)\arrow[d,"="]\\
    T_{[(E,\theta, \underline{\sigma}\cdot \delta)]}\overline{\cM}^{\Delta}(n,D)^{ur}\arrow[r,"\sim"]&\bbH^1(C^\bullet)
    \end{tikzcd}
\end{equation*}
    for $\underline{\sigma}\in S_n^{|D|}$. The differential of the quotient map 
    \begin{center}
        $dq: T_{[(E,\theta, \delta)]}\overline{\cM}^{\Delta}(n,D)^{ur} \to T_{[(E,\theta, S_n^{|D|}\cdot \delta)]}\cM^{\Delta}(n,D)^{ur} $
    \end{center}
     is an isomorphism. Hence, the canonical identification $ T_{[(E,\theta, \delta)]}\overline{\cM}^{\Delta}(n,D)^{ur} \cong \bbH^1(C^\bullet)$ descends to the tangent space $T_{[(E,\theta, S_n^{|D|}\cdot \delta)]}\cM^{\Delta}(n,D)^{ur}$ via $dq$ and yields a canonical isomorphism $T_{[(E,\theta, S_n^{|D|}\cdot \delta)]}\cM^{\Delta}(n,D)^{ur}\cong \bbH^1(C^\bullet).$ Since the group action of $S_n^{|D|}$ is trivial on $\bbH^1(C^\bullet)$, the symplectic form $\omega_\Delta$ on $\overline{\cM}^{\Delta}(n,D)^{ur}$ is invariant under $S_n^{|D|}$. 
\end{proof}
    
    \begin{corollary}\label{hitchin fibration is S-PIS.}
        The map $h_{\Delta}^{ur}: \cM^\Delta(n,D) ^{ur}\to B^{ur} $ forms a semi-polarized integrable system. 
    \end{corollary}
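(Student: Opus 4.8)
The plan is to verify directly the three defining conditions of a smooth semi-polarized integrable system for $h_\Delta^{ur}$, assembling the symplectic structure from the preceding proposition with the fiberwise identification supplied by the spectral correspondence. Write $M=\cM^\Delta(n,D)^{ur}$ equipped with the descended symplectic form $\omega'_\Delta$. By Proposition \ref{prop:symplectic} and Remark \ref{half_dimension}, $M$ is a smooth holomorphic symplectic manifold with $\dim M=(n^2-1)(2g-2+d)+(n-1)d=2\dim B^{ur}$, so the numerics of the definition are already in place, with the open dense subset taken to be all of $B^\circ=B^{ur}$ (hence the system will be \emph{smooth}).

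First I would settle flatness and surjectivity. Every $b\in B^{ur}$ determines a smooth spectral cover $\overline p_b\colon\ts_b\to\sm$, and Proposition \ref{spectralcorrespondence}(2) identifies the fiber $(h_\Delta^{ur})^{-1}(b)$ with $\Prym(\ts_b^\circ,\sm^\circ)$, which is nonempty; this gives surjectivity. By diagram \eqref{abeliangp_diagram} each such Prym is a semi-abelian variety of dimension $\dim\Prym(\ts_b,\sm)+(n-1)d=\dim B^{ur}$, so all fibers are equidimensional of the expected dimension $\dim M-\dim B^{ur}$. Since $M$ is smooth (in particular Cohen--Macaulay) and $B^{ur}$ is regular, miracle flatness then yields flatness of $h_\Delta^{ur}$.

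Next I would transport the Lagrangian property through the quotient $q\colon\overline{\cM}^\Delta(n,D)^{ur}\to M$. Because the $S_n^{|D|}$-action is free (by the lemma preceding the quotient construction), $q$ is finite étale, so $dq$ is an isomorphism on tangent spaces and $q^*\omega'_\Delta=\omega_\Delta$; thus $q$ is a local symplectomorphism carrying the fibers of $\overline h_\Delta$ onto those of $h_\Delta^{ur}$. Proposition \ref{lagrangian} shows the fibers of $\overline h_\Delta$ are Lagrangian for $\omega_\Delta$, whence the fibers of $h_\Delta^{ur}$ are Lagrangian for $\omega'_\Delta$. Smoothness and connectedness of the fibers follow at once from the identification with $\Prym(\ts_b^\circ,\sm^\circ)$, which is connected as the identity component of $\ker\Nm^\circ$.

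Finally, condition (3): Proposition \ref{spectralcorrespondence} together with diagram \eqref{abeliangp_diagram} exhibits each fiber as an extension $0\to(\C^*)^{(n-1)d}\to\Prym(\ts_b^\circ,\sm^\circ)\to\Prym(\ts_b,\sm)\to 0$, i.e. an extension of the abelian variety $\Prym(\ts_b,\sm)$ by an affine torus of rank $k=(n-1)d$, with the polarization on the abelian part coming from the weight-$(-1)$ graded piece identified in Corollary \ref{Hodge structure of Prym}. This matches the definition with abelian dimension $\dim B^{ur}-(n-1)d$ and torus dimension $k=(n-1)d$, and assembling the three verifications proves the claim. The only genuinely nontrivial ingredient is the combination of flatness with the descent through $q$; everything else is a direct reading of the cited results, so I expect the main (and minor) obstacle to be confirming that the global symplectic and Lagrangian data pass through the finite quotient without loss — which is guaranteed precisely because the $S_n^{|D|}$-action on $\overline{\cM}^\Delta(n,D)^{ur}$ is free.
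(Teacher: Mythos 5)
Your proof is correct and follows essentially the same route as the paper's: the semi-abelian fiber condition comes from the spectral correspondence (Proposition \ref{spectralcorrespondence}), and the Lagrangian condition descends from Proposition \ref{lagrangian} through the free $S_n^{|D|}$-quotient via $q^*\omega'_\Delta=\omega_\Delta$. The extra verifications you supply (surjectivity, flatness via miracle flatness, connectedness of the Prym) are details the paper's two-line proof leaves implicit, not a different argument.
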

    \begin{proof}
         By the spectral correspondence proved in Proposition \ref{spectralcorrespondence}, the fibers are semi-abelian varieties.  
        Since $\omega'_\Delta$ descends from the symplectic form $\omega_{\Delta}$, it follows immediately from Proposition \ref{lagrangian} that the fiber of the map $h_\Delta^{ur}: \cM^{\Delta}(n,D)^{ur}\to B^{ur}$ is Lagrangian with respect to $\omega'_\Delta.$    
    \end{proof}
    \begin{rem}
    For a fixed $b\in B^{ur}$, the fiber $(h_{\Delta}^{ur})^{-1}(b)$ is a semi-abelian variety $\Prym(\ts^\circ_b, \sm^\circ)$ which admits a $(\C^*)^{(n-1)d}$-action. This group action can be seen by viewing $\Prym(\ts^\circ_b,\sm^\circ)$ as parametrizing framed line bundles on $\ts_b$ which correspond to unordered diagonally framed $SL(n,\C)$-Higgs bundles under spectral correspondence. Then $(\C^*)^{(n-1)d}$ acts simply transitively on the space of framings over $D$ for each fixed line bundle, and the quotient map is equivalent to the natural map $\Prym(\ts^\circ_b,\sm^\circ) \to \Prym(\ts_b,\sm)$ of forgetting the framings. Applying this fiberwise quotient by $(\C^*)^{(n-1)d}$ to the fibration $\cM^{\Delta}(n,D)^{ur}\to B^{ur}$, we see that the quotient map is precisely the forgetful map $f_1: \cM^{\Delta}(n,D)^{ur} \to \cM(n,D)^{ur}$. Thus, this provides a geometric interpretation of the fact that the Poisson integrable system $\cM(n,D)^{ur}\to B^{ur}$ is realized as the fiberwise compact quotient of the semi-polarized integrable system $\cM^{\Delta}(n,D)\to B^{ur}$ as discussed in Section \ref{sec: semipolarized IS}. 
    \end{rem}
    
\subsection{Cameral description}\label{sec: cameral description}
Although the spectral curve description is more intuitive and straightforward, it only works for classical groups. To describe the general fiber of  Hitchin system for any reductive group $G$ as well as prove DDP-type results, it is more natural to use the cameral curve description and \textit{generalized Prym variety}. In this section, we focus on the extension of classical results in our case (A-type). The goal is to describe the mixed Hodge structure of unordered diagonally framed Hitchin fibers in terms of cameral curves. We refer to \cite{DonagiGaitsgory}\cite{Langlandsduality} for more basics and details about the cameral description. 

In this section, we use general notation from algebraic group theory with an eye towards a  generalization of the previous arguments to any complex reductive group $G$ (see Remark \ref{rem:forthcoming paper}). We fix a maximal tori $T \subset G$ and denote by $\ft$ the corresponding Lie algebra. We also write $W$ for the Weyl group. 

As the Hitchin base $B$ can be considered as the space of sections of $K_\Sigma(D) \otimes \ft/W$, we have the following commutative diagram
\begin{equation}\label{eq:Cameralgluing}
    \begin{tikzcd}
    \widetilde{\bm{\Sigma}} \arrow[r] \arrow[d,"\bm{\widetilde{p}}"] & \bm{\widetilde{U}} :=\Tot(K_\Sigma(D) \otimes  \ft) \arrow[d,"\bm{\phi}" ] \\
    \Sigma \times B \arrow[r, "ev"] & \bm{U} :=\Tot(K_\Sigma(D) \otimes \ft/W)
    \end{tikzcd}
\end{equation}
where $\widetilde{\bm{\Sigma}}$ is the \textit{universal cameral curve} of $\Sigma$. By composing $\bm{\widetilde{p}}$ with the projection to $B$, we have a family of cameral curves $p:\widetilde{\bm{\Sigma}} \to B$ whose fiber is a $W$-Galois cover of the base curve $\Sigma$. An interesting observation is that in the meromorphic case, one can consider the universal cameral pair $(\widetilde{\bm{\Sigma}}, \widetilde{\bm{D}}:=\bm{\widetilde{p}}^{-1}(D \times B))$, which allows us to extend the notion of generalized Prym variety \cite{donagi1993decomposition}. Let's recall the definition of the generalized Prym variety. For a generic $b \in B$, we define a sheaf of abelian groups $\cT_b$ by 
\begin{equation*}
    \cT_b(U):=\{t \in \widetilde{p}_{b*}(\Lambda_G \otimes \cO_{\widetilde{\Sigma}}^*)^W(U)|\, \alpha(t)|_{M^\alpha}=+1 \quad \forall \alpha \in R(G) \}
\end{equation*}
where $R(G)$ is a root system and $\Lambda_G$ is the cocharater lattice and $M^\alpha$ is the ramification locus of $\widetilde{p}_b:\widetilde{\Sigma}_b \to \Sigma$ fixed by the reflection $S_2 \in W$ corresponding to $\alpha$. We define the generalized Prym variety of $\widetilde{\Sigma}_b$ over $\Sigma$ as the sheaf cohomology $H^1(\Sigma, \cT_b)$. 
\begin{theorem}[\cite{DonagiGaitsgory}\cite{HHP2010projective}]
    For $b \in B^\circ$, the fiber $h^{-1}(b)$ in the meromorphic Hitchin system is isomorphic to the generalized Prym variety $H^1(\Sigma, \cT_b)$: 
    \begin{equation*}
        h^{-1}(b) \cong H^1(\Sigma, \cT_b)
    \end{equation*}
where $B^\circ$ is the locus of smooth cameral curves with simple ramifications.     
\end{theorem}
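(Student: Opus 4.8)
The plan is to reduce the statement to the cameral description of Hitchin fibers of Donagi--Gaitsgory and to verify that their analysis carries over to the meromorphic setting, exploiting that the hypothesis $b \in B^\circ$ forces all ramification of the cameral cover to lie away from $D$. First I would recall the abstract correspondence: for a smooth cameral cover $\widetilde{p}_b : \widetilde{\Sigma}_b \to \Sigma$, a $G$-Higgs bundle $(E,\theta)$ whose associated cameral cover is $\widetilde{\Sigma}_b$ is equivalent to the datum of a $W$-equivariant $T$-torsor on $\widetilde{\Sigma}_b$ satisfying a compatibility condition along the ramification divisor. The twisting by $K_\Sigma(D)$ merely replaces the trivial ambient coefficients by $K_\Sigma(D)$ in the diagram \eqref{eq:Cameralgluing}; since the cameral cover is unramified over $D$, this twist leaves the local structure at the ramification points untouched.

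The key step is to make precise the equivalence between such $W$-equivariant $T$-torsors and torsors under the sheaf $\cT_b$. The definition of $\cT_b$ as the subsheaf of $\widetilde{p}_{b*}(\Lambda_G \otimes \cO^*_{\widetilde{\Sigma}})^W$ cut out by the conditions $\alpha(t)|_{M^\alpha} = +1$ for all $\alpha \in R(G)$ is designed exactly so that its torsors descend to genuine $G$-Higgs bundles on $\Sigma$. I would verify this locally: away from the ramification divisor a $W$-equivariant $T$-torsor is the same as a $\widetilde{p}_{b*}(T)^W$-torsor, while at a simple ramification point the stabilizer is the order-two subgroup generated by the reflection $S_2$ attached to the relevant root $\alpha$, and the condition $\alpha(t)|_{M^\alpha} = +1$ is precisely the requirement that the torsor extend across the ramification compatibly with the $S_2$-action. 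Because $b \in B^\circ$ guarantees simple ramification, only a single reflection intervenes at each point, and the condition takes this clean form.

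Having identified $h^{-1}(b)$ with the set of isomorphism classes of $\cT_b$-torsors, the final step is to observe that this set is by definition the sheaf cohomology group $H^1(\Sigma, \cT_b)$, yielding the desired isomorphism $h^{-1}(b) \cong H^1(\Sigma, \cT_b)$ (an identification of torsors, made canonical once a reference point is fixed). For type $A$ I would additionally cross-check this against the spectral description of Proposition \ref{spectralcorrespondence}: the generalized Prym variety $H^1(\Sigma, \cT_b)$ should recover $\Prym(\ts_b, \sm)$, and its framed/punctured refinement should recover $\Prym(\ts^\circ_b, \sm^\circ)$, giving an independent consistency check compatible with \cite{DonagiGaitsgory}.

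The main obstacle I expect is making the ramification condition rigorous, that is, verifying that $\alpha(t)|_{M^\alpha} = +1$ is exactly the descent condition guaranteeing a genuine $G$-Higgs bundle rather than an object with an orbifold-type singularity along the ramification locus. This requires a careful local computation at each simple ramification point, tracking the $S_2$-action on the cocharacter lattice $\Lambda_G$ and the induced action on the torsor. The meromorphic twist itself is harmless here precisely because the ramification locus and the divisor $D$ are disjoint; the entire subtlety is local to the ramification points and is identical to the classical analysis, so the content of the argument is to confirm that no new phenomena are introduced by the poles along $D$.
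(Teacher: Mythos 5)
First, a point of calibration: the paper offers no proof of this statement---it is quoted from \cite{DonagiGaitsgory} and \cite{HHP2010projective}, so there is no internal argument to compare against. Your sketch follows the abelianization strategy of those references (Higgs bundles with a fixed cameral cover $\leftrightarrow$ $W$-equivariant $T$-torsors subject to conditions along the ramification divisor $\leftrightarrow$ $\cT_b$-torsors, whose isomorphism classes form $H^1(\Sigma,\cT_b)$), which is indeed how the cited sources proceed, and you correctly flag that the resulting identification $h^{-1}(b)\cong H^1(\Sigma,\cT_b)$ is an identification of torsors that becomes canonical only after a base point is chosen.

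However, there is a genuine error in how you use the hypothesis. You assert that $b\in B^\circ$ ``forces all ramification of the cameral cover to lie away from $D$,'' and you lean on this disjointness at three places: in the setup, in the local analysis at ramification points, and in the closing claim that ``the meromorphic twist itself is harmless here precisely because the ramification locus and the divisor $D$ are disjoint.'' But $B^\circ$ is defined only by smoothness of the cameral curve and simplicity of the ramification; the condition of being unramified over $D$ is exactly what cuts out the strictly smaller locus $B^{ur}\subset B^\circ$, which the paper introduces separately precisely because it is an additional constraint. For $b\in B^\circ\setminus B^{ur}$ a simple ramification point of $\widetilde{p}_b:\widetilde{\Sigma}_b\to\Sigma$ may lie over $D$, and your argument, which treats ramification points as disjoint from the polar divisor, does not cover that configuration; as written it proves the theorem only over $B^{ur}$. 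The gap is reparable, and the repair makes the proof cleaner: the Donagi--Gaitsgory local computation at a simple ramification point involves only the cameral cover, the reflection attached to the root $\alpha$, and its action on $\Lambda_G$ and on the torsor; the line bundle in which the Higgs field takes values ($K_\Sigma$ versus $K_\Sigma(D)$) never enters that computation, and the definition of $\cT_b$ makes no reference to $D$. So the right statement is not that the poles are harmless because they avoid the ramification, but that the abelianization is insensitive to the twisting line bundle altogether, hence applies verbatim even at ramification points lying over $D$.
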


Let $i_D:D \hookrightarrow \Sigma \hookleftarrow \Sigma \setminus D:j_D$ be inclusions. Associated to the cameral pair $(\widetilde{\Sigma}_b, \widetilde{D}_b)$, one can extend the generalized Prym variety to $H^1(\Sigma, j_{D!}j^*_D\cT_b)$ which is isomorphic to $h_\Delta^{-1}(b)$ when $G=SL(n,\C)$.

\begin{proposition}\label{prop:cameraldescription}
 For $b \in B^{ur}$, the unordered diagonally framed Hitchin fiber $(h_{\Delta})^{-1}(b)$ is isomorphic to $H^1(\Sigma, j_{D!}j_D^*\mathcal{T}_b)$. In particular, it is a semi-abelian variety which corresponds to the $\Z$-mixed Hodge structure
\begin{equation} \label{eq:relative mixed Hodge}
    (H^1(\Sigma,D,(\widetilde{p}_{b*}\Lambda_{SL(n)})^W)_{\textrm{tf}}, H^1(\widetilde{\Sigma}_b, \widetilde{D}_b, \ft)^W)
\end{equation}
whose weight and Hodge filtration are induced from Hodge structure of $H^1(\widetilde{\Sigma}_b, \ft)^W$ and $H^0(\widetilde{D}_b, \ft)^W$.
\end{proposition}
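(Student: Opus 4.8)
The plan is to prove the two assertions in sequence: first identify the fiber $(h_{\Delta})^{-1}(b)$ with the extended generalized Prym $H^1(\Sigma, j_{D!}j_D^*\cT_b)$, and then linearize the latter to read off the claimed $\Z$-mixed Hodge structure. The hypothesis $b \in B^{ur}$ is what makes everything clean: the cameral cover $\widetilde p_b : \widetilde\Sigma_b \to \Sigma$ is unramified over $D$, so $\widetilde D_b = \widetilde p_b^{-1}(D)$ is reduced of degree $|W|\cdot|D|$ and, crucially, disjoint from the ramification locus $\bigcup_\alpha M^\alpha$ entering the definition of $\cT_b$. This disjointness is precisely what allows the extension by zero $j_{D!}j_D^*$ to interact with the conditions cutting out $\cT_b$ without interference.

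For the first assertion I would adapt the proof of the Donagi--Gaitsgory isomorphism $h^{-1}(b)\cong H^1(\Sigma,\cT_b)$ quoted above, tracking the framing datum at $D$. A diagonal framing of $(E,\theta,\delta)$ over $D$ is a $W$-equivariant trivialization of the relevant torsor over $\widetilde D_b$, and imposing it replaces $\cT_b$ by $j_{D!}j_D^*\cT_b$ off $D$; the resulting $H^1$ recovers the framed fiber. Rather than redo the full torsor argument, I would verify the identification by comparison with the spectral answer already in hand: under Donagi's decomposition of $SL(n,\C)$-cameral covers \cite{donagi1993decomposition}, where $\widetilde\Sigma_b$ carries the $S_n$-action and $\ts_b=\widetilde\Sigma_b/S_{n-1}$, the framed-torsor data over $\widetilde D_b$ corresponds exactly to the framed-line-bundle data parametrized by $\Prym(\ts_b^\circ,\sm^\circ)$ in Proposition \ref{spectralcorrespondence}. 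Matching the two then gives $(h_\Delta)^{-1}(b)\cong H^1(\Sigma, j_{D!}j_D^*\cT_b)$.

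To produce the mixed Hodge structure I would linearize via the exponential sequence on $\widetilde\Sigma_b$,
\[
0 \to \Lambda_{SL(n)} \to \Lambda_{SL(n)}\otimes\cO_{\widetilde\Sigma_b} \xrightarrow{\exp} \Lambda_{SL(n)}\otimes \cO^*_{\widetilde\Sigma_b} \to 0,
\]
push forward along the finite map $\widetilde p_b$ and take $W$-invariants (exact in characteristic zero, as $|W|$ is invertible), then apply $j_{D!}j_D^*$. The long exact sequence in cohomology exhibits $H^1(\Sigma, j_{D!}j_D^*\cT_b)$ as a semi-abelian variety whose character lattice is $H^1(\Sigma, D, (\widetilde p_{b*}\Lambda_{SL(n)})^W)_{\mathrm{tf}}$ and whose Lie algebra is $H^1(\Sigma, D, (\widetilde p_{b*}(\ft\otimes\cO))^W)$. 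Using $\Lambda_{SL(n)}\otimes\C=\ft$ together with the identification of $W$-invariant pushforward with $W$-invariant relative cohomology, $H^k(\Sigma, D, (\widetilde p_{b*}\cF)^W)\cong H^k(\widetilde\Sigma_b,\widetilde D_b,\cF)^W$, the Lie algebra becomes $H^1(\widetilde\Sigma_b,\widetilde D_b,\ft)^W$, carrying the Hodge filtration induced from the relative cohomology of the cameral curve. This is exactly the pair in \eqref{eq:relative mixed Hodge}.

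Finally I would read off the weight filtration and the type. The relative cohomology sequence, with $\widetilde D_b$ a finite set, yields
\[
0 \to \coker\!\big(H^0(\widetilde\Sigma_b,\ft)^W \to H^0(\widetilde D_b,\ft)^W\big) \to H^1(\widetilde\Sigma_b,\widetilde D_b,\ft)^W \to H^1(\widetilde\Sigma_b,\ft)^W \to 0,
\]
so that $W_{-2}$ is the image of $H^0(\widetilde D_b,\ft)^W$ (the affine-torus part) and $\Gr^W_{-1}\cong H^1(\widetilde\Sigma_b,\ft)^W$ (the abelian part), giving type $\{(-1,-1),(-1,0),(0,-1)\}$; by the equivalence recalled in Appendix \ref{appendix} its Jacobian is a semi-abelian variety, and consistency with Corollary \ref{Hodge structure of Prym} pins down the claim. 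The main obstacle I anticipate is the first step: making the framed cameral correspondence precise, specifically checking that extension by zero along $\widetilde D_b$ is the correct sheaf-level incarnation of the framing and that the ramification conditions defining $\cT_b$ survive $W$-invariance and pushforward without creating spurious torsion in the lattice. The assumption $b\in B^{ur}$, which forces $\widetilde D_b$ to avoid every $M^\alpha$ and makes the residue of $\theta$ regular semisimple, is precisely the input I expect to dissolve these difficulties.
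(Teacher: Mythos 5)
Your overall skeleton is close to the paper's: both arguments ultimately identify the cameral object with the spectral one (via Donagi's decomposition $\ts_b = \widetilde{\Sigma}_b/S_{n-1}$) and then read off the mixed Hodge structure by linearizing. But there is a genuine gap at the linearization step, and it is exactly the point you flag at the end and hope will "dissolve" under the hypothesis $b \in B^{ur}$ --- it does not. Pushing the exponential sequence forward along $\widetilde{p}_b$ and taking $W$-invariants computes (up to torsion issues) the cohomology of $(\widetilde{p}_{b*}(\Lambda_{SL(n)}\otimes\cO^*_{\widetilde{\Sigma}_b}))^W$, which is \emph{not} $\cT_b$: the sheaf $\cT_b$ is the proper subsheaf cut out by the conditions $\alpha(t)|_{M^\alpha}=+1$ supported on the ramification locus of $\widetilde{p}_b$. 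That locus is nonempty for every $b\in B^{ur}$, since $B^{ur}$ only forbids ramification \emph{over $D$} and requires simple ramification elsewhere. Unramifiedness over $D$ ensures $\widetilde{D}_b$ is disjoint from the $M^\alpha$, which indeed cleans up the interaction between the framing and $j_{D!}j_D^*$, but it says nothing about the conditions at the $M^\alpha$ themselves, and the two sheaves genuinely differ there. The paper's proof never runs your exponential-sequence argument on the cameral curve; instead it quotes the type-A isomorphism $\cT_b \cong (\widetilde{p}_{b*}\Lambda_{SL(n)})^W\otimes\cO^*_\Sigma$ of \cite{Langlandsduality}, which already absorbs the ramification conditions into the lattice sheaf, so that the Jacobian description with lattice $H^1(\Sigma,D,(\widetilde{p}_{b*}\Lambda_{SL(n)})^W)_{\textrm{tf}}$ follows from the (relative) exponential sequence on $\Sigma$ itself. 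The identification with the Hitchin fiber is then obtained by combining Lemma \ref{rem:Prym sheaf} (which gives $(h_\Delta)^{-1}(b)$ as the Jacobian of the mixed Hodge structure on $H^1(\Sigma,D,\cK_b)_{\textrm{tf}}$) with the sheaf isomorphism $(\widetilde{p}_{b*}\Lambda_{SL(n)})^W\cong\cK_b$ of Lemma \ref{lem:spectraltocameral} --- this last isomorphism is the precise content of the "matching of framed-torsor data with framed-line-bundle data" that you leave informal in your first step.

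Two smaller points. First, your claim that taking $W$-invariants is exact "as $|W|$ is invertible" is false in the relevant category: $\Lambda_{SL(n)}$ is a sheaf of $\Z$-modules, and $|W|$ is not invertible in $\Z$; this is precisely why torsion appears and why the statement (and the paper's proof) systematically passes to the torsion-free quotient $(-)_{\textrm{tf}}$. Second, your reading of the weight filtration from the relative cohomology sequence ($W_{-2}$ from the image of $H^0(\widetilde{D}_b,\ft)^W$, $\Gr^W_{-1}\cong H^1(\widetilde{\Sigma}_b,\ft)^W$) is correct and agrees with the proposition, but it only becomes available once the linearization of $\cT_b$ has been justified by the route above.
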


\begin{proof}
In the case $G=SL(n,\C)$, it is known that $\cT_b$ is isomorphic to $(\widetilde{p}_{b*}(\Lambda_{SL(n)})^W)\otimes \cO^*_\Sigma$ (see \cite[Section 3]{Langlandsduality}). In other words, there is an isomorphism between the cocharacter lattice of $H^1(\Sigma, \cT_b)$ and $H^1(\Sigma, (\widetilde{p}_{b*}\Lambda_{SL(n)})^W\otimes \cO_{\sm}^*)$. We can then extend this result to show that the generalized Prym variety $H^1(\Sigma,j_{D!}j^*_D\cT_b)$ is isomorphic to the Jacobian of $\Z$-mixed Hodge structure (\ref{eq:relative mixed Hodge}) on $H^1(\Sigma,D,(\widetilde{p}_{b*}\Lambda_{SL(n)})^W)_\textrm{tf}$. 

On the other hand, the fiber $(h_{\Delta})^{-1}(b)$ is isomorphic to the Jacobian of the $\Z$-mixed Hodge structure on $H^1(\Sigma, D,\cK_b)_\textrm{tf}$ where $\cK_b:=\ker(\Tr:\bar{p}_{b*}\Z \to \Z)$ (see Lemma \ref{rem:Prym sheaf}). To relate it with the cameral description, we consider an isomorphism of sheaves, 
\begin{equation}
    (\widetilde{p}_{b*}\Lambda_{SL(n)})^W \cong \cK_b
\end{equation}
which will be proved in Lemma \ref{lem:spectraltocameral}. It induces the isomorphism of $\Z$-mixed Hodge structures of type $\{((0,0), (0,1) , (1,0)\}$ on the torsion free part of the relative sheaf cohomologies:
\begin{equation*}
    H^1(\Sigma,D,(\widetilde{p}_{b*}\Lambda_{SL(n)})^W)_\textrm{tf} \cong H^1(\Sigma,D,\cK_b)_\textrm{tf}.
\end{equation*}
\end{proof}

\begin{rem}\label{rem:forthcoming paper} 
In \cite[Section 7]{biswas2019symplecticGHiggs}, the theory of diagonally\footnote{In \cite{biswas2019symplecticGHiggs}, the authors use the word "relatively" instead of "diagonally".} framed Higgs bundles for arbitrary reductive group $G$ and its abelianization have been studied. Similar to the case of $SL(n,\C)$, a generic fiber of diagonally framed Hitchin fiber $(h_\Delta)^{-1}(b)$ is a semi-abelian variety, which is an extension of a meromorphic Hitchin fiber $h^{-1}(b)$ by the affine torus $T^{|D|}/Z(G)$ where $Z(G)$ is the center of the group $G$ (\cite[Corollary 7.10]{biswas2019symplecticGHiggs}. We explain how the relative version of the generalized Prym variety $H^1(\Sigma, j_{D!}j^*_D\cT_b)$ gives this description as follows.
\\ 
Note that an additional data of diagonal framing amounts to specifying $W$-equivariant section of $T$-bundle at $D$. This can be formulated as $H^0(D_b, \mathcal{T}_b)=H^0(D, (\widetilde{p}_{b*}\Lambda_{G})^W \otimes \C^*)$ modulo 
the action of the center $Z(G)$. Moreover, the distinguished triangle in the constructible derived category of $\Sigma$, $D_c^b(\Sigma)$
\begin{equation*}
    j_{b!}j^*_b \to id \to {i_{b}}_* i_b^* \xrightarrow{+1}
\end{equation*}
induces the long exact sequence as follows
\begin{equation}
    H^0(\Sigma, j_{b!}j^*_b\mathcal{T}_b) \to H^0(\Sigma, \mathcal{T}_b) \xrightarrow{i_D^*} H^0(D, \mathcal{T}_b) \to H^1(\Sigma, j_{b!}j^*_b\mathcal{T}_b) \to H^1( \Sigma, \mathcal{T}_b) \to 0.
\end{equation}
Here, $H^0(\Sigma, \mathcal{T}_b)$ is the space of $W$-equivariant maps, $\Hom_W(\widetilde{\Sigma}_b, T)$, which takes values 1 on $M_{\widetilde{\Sigma}_b}^\alpha$ for every root $\alpha$. Note that 
\begin{equation*}
    Z(G)=\{t \in T^W|\alpha(t)=1 \quad \text{for all} \quad \alpha \in R(G) \}.
\end{equation*}
Therefore, the cokernel of $i_D^*:H^0(\Sigma, \mathcal{T}_b) \to H^0(D, \mathcal{T}_b)$ can be identified with $T^{|D|}/Z(G)$, a level subgroup. Clearly this is a copy of $\C^*$'s, so we have the semi-abelian variety $H^1(\Sigma,j_{b!}j^*_b\mathcal{T}_b)$ as an extension of $H^1( \Sigma, \mathcal{T}_b)$ by $T^{|D|}/Z(G)$.

\end{rem}
\subsection{Abstract Seiberg-Witten differential}
Using the cameral description introduced in Section \ref{sec: cameral description}, one can define an abstract Seiberg-Witten differential. For simplicity, we denote $\mathsf{V}=(V_\Z, W_\bullet V_\Z, F^\bullet V_\cO)$ the variation of $\Z$-mixed Hodge structure over $B$ introduced in Proposition \ref{prop:cameraldescription}. Consider the dual variation of $\Z$-mixed Hodge structure, $\mathsf{V}^\vee=(V^\vee_\Z, W_\bullet V^\vee_\Z, F^\bullet V^\vee_\cO)$ whose fiber at $b \in B$ is the $\Z$-mixed Hodge structure $(H^1(\Sigma \setminus D, (\widetilde{p}_{b,*}\Lambda^\vee_{SL(n)})^W), H^1(\widetilde{\Sigma}_b \setminus \widetilde{D}_b, \ft)^W)$. 

Recall that in the case of holomorphic Hitchin system \cite{HHP2010projective}, the Seiberg-Witten differential is a holomorphic one-form which is obtained by the tautological section of the pullback of $K_\Sigma$ under $\Tot(K_\Sigma) \to \sm$. Similarly, in the meromorphic case, the tautological section of the pullback of $K_\Sigma(D)$ under $\Tot(K_\Sigma(D))\to \sm$ gives the logarithmic 1-form $\theta$. In other words, for each $b \in B$, we have
\begin{equation*}
   \theta|_{\widetilde{\Sigma}_b} \in H^0(\widetilde{\Sigma}_b, \ft\otimes \Omega^1_{\widetilde{\Sigma}_b}(\log \widetilde{D}_b))^W =F^1H^1(\widetilde{\Sigma}_b \setminus \widetilde{D}_b, \ft)^W
\end{equation*}
This is the natural candidate of the abstract Seiberg-Witten differential, which we will denote by $\lambda_\Delta$. 

\begin{lemma}\label{lem:hitchin base identification}\cite{hurtubisemarkman98} \cite{Kjiri2000GeneralizedHitchin}
For each $b \in B^{ur}$, the logarithmic $2$-form $d\theta_b$ induces an isomorphism by the contraction map
\begin{equation*}
\begin{aligned}
    T_bB^{ur} & \xrightarrow{\cong} H^0(\widetilde{\Sigma}_b, \ft\otimes \Omega^1_{\widetilde{\Sigma}_b}(\log \widetilde{D}_b))^W \\
    \mu & \mapsto \iota_\mu d\theta_b
\end{aligned}
\end{equation*}
\end{lemma}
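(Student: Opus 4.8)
The plan is to realize both sides of the asserted isomorphism through the geometry of the universal cameral curve inside the total space $\bm{\widetilde{U}} = \Tot(K_\Sigma(D) \otimes \ft)$ and to identify the contraction with a Gauss--Manin derivative, following the strategy of Hurtubise--Markman and its logarithmic refinement by Kjiri. First I would record that $B^{ur}$ is open in the vector space $B$, so that $T_b B^{ur} \cong B$, and that an element $\mu \in T_b B^{ur}$ is a first-order variation of the defining section $b$. Pulling this variation back along $\bm{\widetilde{p}}$ and using that $\widetilde{\Sigma}_b$ is cut out inside $\bm{\widetilde{U}}$ by (the pullback of) the section $b$, one obtains a canonical $W$-equivariant normal field $\nu_\mu \in H^0(\widetilde{\Sigma}_b, N_{\widetilde{\Sigma}_b})^W$. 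The tautological $\ft$-valued logarithmic $1$-form $\theta$ on $\bm{\widetilde{U}}$ has differential $d\theta$, a $\ft$-valued logarithmic $2$-form, and the contraction $\iota_{\nu_\mu}\, d\theta|_{\widetilde{\Sigma}_b}$ is exactly the object the lemma denotes $\iota_\mu d\theta_b$. Conceptually this contraction computes the Gauss--Manin derivative $\nabla^{GM}_\mu \lambda_\Delta$ of the Seiberg--Witten section via a Cartan-formula argument, which is the reason the lemma suffices to verify the abstract Seiberg--Witten condition \eqref{def:SW condition}.

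Next I would check that the contraction indeed lands in $H^0(\widetilde{\Sigma}_b, \ft \otimes \Omega^1_{\widetilde{\Sigma}_b}(\log \widetilde{D}_b))^W$. The $W$-invariance is automatic from the $W$-equivariance of $\theta$ and of $\nu_\mu$. The control of the poles is a local computation in two places: along $\widetilde{D}_b = \bm{\widetilde{p}}^{-1}(D)$, where the $K_\Sigma(D)$-twist produces exactly a first-order (logarithmic) pole and no worse because $b \in B^{ur}$ is unramified over $D$; and along the ramification locus $\bigsqcup_\alpha M^\alpha$ of $\bm{\widetilde{p}}$, where simple ramification together with the reflection $s_\alpha$-action forces the potentially problematic components of the contraction to be $W$-anti-invariant and hence to disappear after taking invariants. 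This is precisely the step that makes essential use of $b$ lying in $B^{ur}$ (smooth cameral curve, simple ramification, unramified over $D$).

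Finally I would prove that the resulting linear map is an isomorphism. I would first match dimensions: the target equals $F^1$ of the $W$-invariant mixed Hodge structure $H^1(\widetilde{\Sigma}_b \setminus \widetilde{D}_b, \ft)^W$ appearing in Proposition \ref{prop:cameraldescription}, whose dimension is $\dim B = \tfrac{1}{2}\dim \bbH^1(C^\bullet)$ by Remark \ref{half_dimension}. It then suffices to prove injectivity, which I would split into two parts: the Kodaira--Spencer map $\mu \mapsto \nu_\mu$ is injective over $B^{ur}$, since $\widetilde{\Sigma}_b$ is literally the $W$-cover determined by $b$ and a nonzero first-order change of $b$ displaces the curve nontrivially; and the map $\nu \mapsto \iota_\nu d\theta$ is injective on normal fields by the non-degeneracy of $d\theta$ as a logarithmic symplectic form on the ambient total space. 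Composing, $\iota_{\nu_\mu} d\theta_b = 0$ forces $\mu = 0$, and injectivity together with the dimension equality yields the isomorphism.

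The main obstacle I anticipate is the local analysis at the ramification divisor: one must show both that the log-$1$-form produced by the contraction has the correct order of pole and that the $W$-invariance condition exactly cancels the extra sections concentrated at the branch points, so that the map is an isomorphism on the nose rather than merely injective up to a finite-dimensional discrepancy. Carrying this out is the meromorphic/logarithmic upgrade of the Hurtubise--Markman computation, and it is where the hypotheses defining $B^{ur}$ are used in full.
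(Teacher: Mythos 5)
Your proposal is correct and follows the same geometric route as the paper: identify $T_bB^{ur}$ with the $W$-invariant sections $H^0(\widetilde{\Sigma}_b, N_{\widetilde{\Sigma}_b})^W$ of the normal bundle of the cameral curve in $\Tot(\ft \otimes K_\Sigma(D))$, and realize the map of the lemma as contraction of $d\theta$ against normal fields. Where you genuinely diverge is in how the isomorphism is concluded. The paper quotes \cite{Kjiri2000GeneralizedHitchin}: the contraction defines a sheaf homomorphism $N_{\widetilde{\Sigma}_b}\to \ft\otimes\Omega^1_{\widetilde{\Sigma}_b}(\log\widetilde{D}_b)$ which induces an isomorphism on $W$-invariant global sections. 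You instead prove injectivity of the composite and match dimensions, using that the target is $F^1H^1(\widetilde{\Sigma}_b\setminus\widetilde{D}_b,\ft)^W$, whose dimension equals the fiber dimension of $h_\Delta^{ur}$, hence $\dim B$ by Proposition \ref{prop:cameraldescription} and Remark \ref{half_dimension}; this is a legitimate, self-contained alternative that leans on results the paper establishes before the lemma, so there is no circularity. Two points of precision. First, your appeal to ``non-degeneracy of $d\theta$ as a logarithmic symplectic form'' needs rephrasing: $\bm{\widetilde{U}}$ has dimension $\dim\ft+1>2$ and $d\theta$ is $\ft$-valued, so it is not a symplectic form; the correct statement, visible in local coordinates $(z,t_1,\dots,t_{n-1})$ where $d\theta_i=dt_i\wedge dz$, is that the kernel of $v\mapsto \iota_v d\theta|_{\widetilde{\Sigma}_b}$ is exactly $T\widetilde{\Sigma}_b$, which gives generic (hence sheaf-level) injectivity on normal fields. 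Second, your anticipated ``main obstacle'' at the ramification locus is not an obstacle for your route: the contraction of a holomorphic normal field is automatically regular at ramification points (no pole appears and no invariance cancellation is needed there); what actually happens is that the sheaf map drops rank at ramification, so surjectivity fails at the sheaf level, and analyzing that cokernel under the $W$-action is exactly the content of the cited result --- your dimension count sidesteps this entirely. In short, the paper's citation yields the stronger sheaf-level statement, while your argument is more elementary at the cost of invoking the earlier fiber-dimension computations.
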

\begin{proof}
From the cameral description, the tangent space $T_bB^{ur}$ is isomorphic to the space of $W$-invariant deformation of $\cs_b$ in $\Tot(\ft \otimes K_\sm(D))$. The latter space is given by $H^0(\cs_b, N_{\cs_b})^W$ where $N$ is the normal bundle of the embedding $\sm_b \hookrightarrow \Tot(\ft \otimes K_\sm(D))$. The logarithmic two form $d\theta$ on $\Tot(\ft \otimes K_\sm(D))$ induces a sheaf homomorphism $N_{\cs_b} \to \ft \otimes \Omega^1_{\cs_b}(\log \widetilde{D_b})$ whose $W$-invariant global sections are isomorphic \cite[Section IV]{Kjiri2000GeneralizedHitchin}. Under the identification $T_bB^{ur} \cong H^0(\cs_b, N_{\cs_b})^W$, the induced isomorphism of $W$-invariant global sections is given by the contraction. 
\end{proof}

\begin{proposition}\label{prop:abstractseiberghitchin}
By applying the Gauss-Manin connection to $\lambda_\Delta$, one can obtain an isomorphism of vector bundles over $B^{ur}$ 
\begin{align*}
    \phi_{\lambda_\Delta}: TB^{ur} & \xrightarrow{\cong} F^1V_\cO^\vee \\
    \mu & \mapsto \nabla^{GM}_{\mu}(\lambda_{\Delta})
\end{align*}
In other words, $\lambda_\Delta \in H^0(B^{ur}, V^\vee_\cO)$ is the abstract Seiberg-Witten differential.
\end{proposition}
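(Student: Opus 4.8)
The plan is to show that $\phi_{\lambda_\Delta}$ coincides, under the identifications already in place, with the contraction isomorphism of Lemma \ref{lem:hitchin base identification}. Recall from Proposition \ref{prop:cameraldescription} that the fibre of $F^1V_\cO^\vee$ at $b\in B^{ur}$ is $F^1H^1(\widetilde{\Sigma}_b\setminus\widetilde{D}_b,\ft)^W = H^0(\widetilde{\Sigma}_b,\ft\otimes\Omega^1_{\widetilde{\Sigma}_b}(\log\widetilde{D}_b))^W$, and that by construction $\lambda_\Delta|_b=\theta|_{\widetilde{\Sigma}_b}$ is exactly the logarithmic one-form coming from the tautological section. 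Thus $\lambda_\Delta$ is a global section of the subbundle $F^1V_\cO^\vee\subset V_\cO^\vee$, and the whole content of the proposition is the computation of its Gauss-Manin derivative together with the observation that this derivative lands back in $F^1V_\cO^\vee$.

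First I would realize all the cameral curves $\widetilde{\Sigma}_b$ uniformly inside the fixed total space $\Tot(\ft\otimes K_\sm(D))$, as in the universal cameral diagram \eqref{eq:Cameralgluing}, on which the tautological logarithmic form $\theta$ is a single fixed object independent of $b$. Under the identification $T_bB^{ur}\cong H^0(\widetilde{\Sigma}_b,N_{\widetilde{\Sigma}_b})^W$ used in Lemma \ref{lem:hitchin base identification}, a tangent vector $\mu$ is represented by a $W$-invariant normal field; choosing a $W$-equivariant lift $\widetilde{\mu}$ to a vector field on a neighbourhood of $\widetilde{\Sigma}_b$ in the ambient total space, the Gauss-Manin derivative of the restricted class $[\lambda_\Delta|_b]=[\theta|_{\widetilde{\Sigma}_b}]$ is represented by the Lie derivative of the ambient form, $\nabla^{GM}_\mu\lambda_\Delta=[\mathcal{L}_{\widetilde{\mu}}\theta]|_{\widetilde{\Sigma}_b}$, and by Cartan's formula $\mathcal{L}_{\widetilde{\mu}}\theta=d(\iota_{\widetilde{\mu}}\theta)+\iota_{\widetilde{\mu}}d\theta$.

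The main step, and the main obstacle, is to check that the exact term $d(\iota_{\widetilde{\mu}}\theta)$ vanishes in the logarithmic de Rham cohomology $H^1(\widetilde{\Sigma}_b\setminus\widetilde{D}_b,\ft)^W$. Since $\theta$ is the tautological one-form, $\iota_{\widetilde{\mu}}\theta$ restricts to a $\ft$-valued function on $\widetilde{\Sigma}_b$, so its differential is exact and hence zero in cohomology; the delicate point is to confirm that the normal lift $\widetilde{\mu}$ can be chosen so that $\iota_{\widetilde{\mu}}\theta$ has at worst the prescribed logarithmic behaviour along $\widetilde{D}_b$, compatibly with the $W$-action. This is the meromorphic (logarithmic) analogue of the computation in \cite{hurtubisemarkman98} and \cite{beck2017hitchin}, and requires bookkeeping the poles and the $W$-invariance carefully. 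Granting this, one obtains $\nabla^{GM}_\mu\lambda_\Delta=[\iota_{\widetilde{\mu}}d\theta]|_{\widetilde{\Sigma}_b}=\iota_\mu d\theta_b$, the very contraction appearing in Lemma \ref{lem:hitchin base identification}; in particular this simultaneously exhibits the image as lying in $F^1V_\cO^\vee$.

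Finally, Lemma \ref{lem:hitchin base identification} asserts precisely that $\mu\mapsto\iota_\mu d\theta_b$ is an isomorphism $T_bB^{ur}\xrightarrow{\cong}H^0(\widetilde{\Sigma}_b,\ft\otimes\Omega^1_{\widetilde{\Sigma}_b}(\log\widetilde{D}_b))^W=F^1V_\cO^\vee|_b$. Hence $\phi_{\lambda_\Delta}$ is a fibrewise isomorphism; being a holomorphic morphism of vector bundles of equal rank over $B^{ur}$, it is an isomorphism of vector bundles, so $\lambda_\Delta$ satisfies the defining condition of an abstract Seiberg-Witten differential. Everything beyond the cohomological vanishing of the exact term is an immediate consequence of the already-established contraction isomorphism.
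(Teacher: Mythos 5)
Your outline is the same as the paper's: represent $\lambda_\Delta$ fibrewise by the tautological logarithmic form, compute $\nabla^{GM}_\mu$ via a Lie derivative, apply Cartan's formula to reduce to $\iota_\mu d\theta_b$, and close with Lemma \ref{lem:hitchin base identification}. But the execution has a genuine gap, located exactly where the paper's proof does its real work. The first problem is your starting device: a single $W$-equivariant \emph{holomorphic} lift $\widetilde{\mu}$ of the normal field to a neighbourhood of $\widetilde{\Sigma}_b$ in $\Tot(\ft\otimes K_\sm(D))$ does not exist in general. Lifting $\nu_\mu\in H^0(\widetilde{\Sigma}_b,N_{\widetilde{\Sigma}_b})^W$ even to a section of the restricted ambient tangent bundle is obstructed by the connecting map $H^0(N_{\widetilde{\Sigma}_b})\to H^1(T_{\widetilde{\Sigma}_b}(-\log\widetilde{D}_b))$, i.e.\ by the Kodaira--Spencer class $\kappa(\mu)$, which is nonzero for generic $\mu$ since the complex structure of the cameral curves varies over $B^{ur}$. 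This is precisely why the paper never uses a global lift: it computes in the \v{C}ech resolution attached to two opens $U$ (complement of the ramification locus) and $U'$ (discs around the branch locus), takes \emph{local} liftings $\mu_U,\mu_{U'}$, and the mismatch term $\iota_{\mu_U-\mu_{U'}}\lambda_{\Delta U'}$ --- which is exactly $m^\vee(\lambda_\Delta,\kappa(\mu))$ --- is an essential part of the cocycle representing $\nabla^{GM}_\mu\lambda_\Delta$, not something that can be suppressed by a better choice of lift.

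The second problem is the step you explicitly ``grant'': that the lift can be arranged so that $\iota_{\widetilde{\mu}}\theta$ has at worst logarithmic behaviour along $\widetilde{D}_b$, compatibly with $W$. This is the substantive content of the proposition, and the paper's proof is structured precisely to dispose of it: the covers are chosen so that $\bm{\widetilde{D}}$ lies inside $U$ and does not meet $U\cap U'$, and near $\bm{\widetilde{D}}$ the universal cameral curve is \'etale over $\sm\times B$ (this is where $b\in B^{ur}$, i.e.\ no ramification over $D$, enters), so the canonical local lifting of $\mu$ there moves only the $B$-direction and keeps $\mathfrak{L}_{\mu_U}\lambda_{\Delta U}$ and all correction terms inside the relative logarithmic de Rham complex; the overlap terms never see the poles. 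Since your proposal assumes this point rather than proving it, and the mechanism you offer for the rest of the computation (a global holomorphic lift) cannot exist, the proof is incomplete at its critical step. It could be repaired along your lines by passing to a $C^\infty$ lift of $\mu$ on the \emph{punctured} family $\widetilde{\bm{\Sigma}}^\circ\to B^{ur}$: then $d(\iota_{\widetilde{\mu}}\theta)$ is the differential of a globally defined smooth function on $\widetilde{\Sigma}_b\setminus\widetilde{D}_b$, hence trivially exact in $H^1(\widetilde{\Sigma}_b\setminus\widetilde{D}_b,\ft)^W$ with no pole bookkeeping at all, and $\iota_{\nu_\mu}d\theta$ restricted to the curve is a holomorphic logarithmic form depending only on the normal field, which lands the class in $F^1V_\cO^\vee$; but that is a different (smooth-category) argument from the one you wrote, and either it or the paper's \v{C}ech argument must actually be carried out.
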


For the rest of this section, we will write $B^{ur}$ simply as $B$. Note that having a variation of $\Z$-mixed Hodge structures over $B$ corresponds to having the classifying map to mixed period domain; $\Phi:B \to \cD/\Gamma$. It admits a holomorphic lift \cite{mixedperiod} $\widetilde{\Phi}:B \to \cD$ which factors through relative Kodaira-Spencer map $\kappa:T_{B,b} \to H^1(\widetilde{\Sigma}_b, T_{\widetilde{\Sigma}_b}(-\log \widetilde{D}_b))$ 
\begin{equation}\label{diagram:mixed period domain}
    \begin{tikzcd}
        T_{B,b} \arrow[rr, "d\widetilde{\Phi}"] \arrow[rd, "\kappa"] && T_{\cD, \widetilde{\Phi}(b)} \\
        & H^1(\widetilde{\Sigma}_b, T_{\widetilde{\Sigma}_b}(-\log\widetilde{D}_b)) \arrow[ru, "m^\vee"] 
    \end{tikzcd}
\end{equation}
where $m^\vee:H^0(\widetilde{\Sigma}_b, \ft \otimes \Omega^1_{\widetilde{\Sigma}_b}(\log \widetilde{D}_b))^W \otimes H^1(\widetilde{\Sigma}_b, T_{\widetilde{\Sigma}_b}(-\log\widetilde{D}_b)) \to H^1(\widetilde{\Sigma}_b, \ft \otimes\cO_{\widetilde{\sm}_b})^W$ is the logarithmic contraction. With this description, one can compute the Gauss-Manin connection of $\lambda_\Delta$.

\begin{proof}
This is a generalization of the work of Hertling-Hoevenaars-Posthuma \cite[Section 8.2]{HHP2010projective}. We will follow closely the approach in \textit{loc.cit.}. The main reason we can apply the similar arguments is that  we restrict to cameral covers with no ramification over the divisors. 

 Consider the following commutative diagram induced by the universal cameral pairs $(\widetilde{\bm{\Sigma}}, \widetilde{\bm{D}}:=\bm{\widetilde{p}}^{-1}(D \times B))$ introduced in (\ref{eq:Cameralgluing})
 \begin{equation*}
     \begin{tikzcd}
     \widetilde{\bm{\Sigma}}^\circ \arrow[r, hook, "j"] \arrow[d, "\bm{\widetilde{p}}^\circ"'] & \widetilde{\bm{\Sigma}} \arrow[d,"\bm{\widetilde{p}}" ] \arrow[dd, bend left=60, "f"]\\
     \sm^\circ \times B \arrow[r, hook] \arrow[rd, "pr_2"] & \sm \times B \arrow[d, "pr_2"] \\
    & B     
     \end{tikzcd}
 \end{equation*}
 where $\widetilde{\bm{\Sigma}}^\circ:=\widetilde{\bm{\Sigma}} \setminus\widetilde{\bm{D}}$, $f:=pr_2 \circ \bm{\widetilde{p}}:\widetilde{\bm{\Sigma}} \to B$. We also define $f^\circ:=f|_{\widetilde{\bm{\Sigma}}^\circ}$. We take open covers $U$ and $U'$ of $\widetilde{\bm{\Sigma}}$ such that for each $b \in B$, $U_b:= U\cap f^{-1}(b)$ is the complement of the ramification locus of $\Tilde{p_b}:\cs_b \to \Sigma$ and $U'_b:= U'\cap f^{-1}(b)$ is a disjoint union of small discs around the branch locus. In particular, the divisor $\bm{\widetilde{D}}$ does not intersect $U\cap U'$.

For simplicity, we will write $f_*$ and $f^\circ_*$ as the $W$-equivariant direct image functors $(f_*(-))^W$ and $(f^\circ_*(-))^W$ respectively. Then we can argue as in \cite[Section 2]{usui1983} that 
\begin{align*}
    V_{\cO}^\vee &= (R^1f_*^\circ \ft)\otimes_\C \cO_B
    \cong R^1f_*^\circ (\ft\otimes_\C (f^\circ)^{-1}\cO_B) \\
    &\cong \bbR^1f_*^\circ \left(\ft\otimes_\C \Omega^\bullet_{\bm{\widetilde{\sm}^\circ}/B}\right)\cong \bbR^1 f_*j_*\left(\ft\otimes_\C \Omega^\bullet_{\bm{\widetilde{\sm}^\circ}/B}\right)
    \cong \bbR^1f_*\left(\ft \otimes_\C \Omega^\bullet_{\widetilde{\bm{\Sigma}}/B}\left(\log\bm{\widetilde{D}}\right)\right)
\end{align*}
where $\ft\otimes_\C \Omega^\bullet_{\widetilde{\bm{\Sigma}}/B}\left(\log\bm{\widetilde{D}}\right)$ is the the ($\ft$-valued) relative logarithmic de Rham complex whose definition we refer to \cite[Definition 2.1]{usui1983} and we will simply denote the complex by $\widetilde{\Omega}^\bullet_{\widetilde{\bm{\Sigma}}/B}$.

Now we consider the \u{C}ech resolution of  $f_*\widetilde{\Omega}^\bullet_{\widetilde{\bm{\Sigma}}/B}$:
\begin{equation*}
    \begin{tikzcd}
    f_*\widetilde{\Omega}^1_{\widetilde{\bm{\Sigma}}/B} \arrow[r] & f_{U*}\widetilde{\Omega}^1_{\widetilde{\bm{\Sigma}}/B} \oplus f_{U'*}\widetilde{\Omega}^1_{\widetilde{\bm{\Sigma}}/B} \arrow[r, "\delta"] & f_{U\cap U'*}\widetilde{\Omega}^1_{\widetilde{\bm{\Sigma}}/B} \\
    f_*\widetilde{\Omega}^0_{\widetilde{\bm{\Sigma}}/B} \arrow[r] \arrow[u]& f_{U*}\widetilde{\Omega}^0_{\widetilde{\bm{\Sigma}}/B} \oplus f_{U'*}\widetilde{\Omega}^0_{\widetilde{\bm{\Sigma}}/B} \arrow[r, "\delta"] \arrow[u, "d"]& f_{U\cap U'*}\widetilde{\Omega}^0_{\widetilde{\bm{\Sigma}}/B} \arrow[u, "d"]
    \end{tikzcd}
\end{equation*}
where we denote by $f_U:= f|_U, f_U':= f|_U', f_{U\cap U'}:= f|_{U\cap U'}$. It allows us to describe global sections in $\bbR^1f_*\widetilde{\Omega}^\bullet_{\widetilde{\bm{\Sigma}}/B}$ as the elements 
\begin{equation*}
    (\alpha_U, \alpha_{U'}, g_\alpha) \in (\widetilde{\Omega}^1_{\widetilde{\bm{\Sigma}}/B}(U) \oplus \widetilde{\Omega}^1_{\widetilde{\bm{\Sigma}}/B}(U')) \bigoplus \widetilde{\Omega}^0_{\widetilde{\bm{\Sigma}}/B}(U \cap U')
\end{equation*}
such that $(\alpha_U-\alpha_{U'})|_{U \cap U'}=d(g_\alpha)$.

Now, take $\lambda_{\Delta} \in F^1V_{\cO}^\vee$ which is represented as an element $\lambda_{\Delta}=(\lambda_{\Delta U}, \lambda_{\Delta U'}, 0)$. Choose a holomorphic vector field $\mu$ on $B$ with local liftings $\mu_U$ and $\mu_{U'}$. Then the Gauss-Manin connection $\nabla^{GM}_\mu(\lambda_{\Delta})$ is given by 
\begin{equation*}
    \nabla_\mu(\lambda_{\Delta U}, \lambda_{\Delta U'}, 0)=(\mathfrak{L}_{\mu_U}\lambda_{\Delta U}, \mathfrak{L}_{\mu_{U'}}\lambda_{\Delta U'}, \iota_{\mu_U-\mu_{U'}}\lambda_{\Delta U'})
\end{equation*}
where $\iota$ is contraction map and $\mathfrak{L}$ is a Lie-derivative operator. In particular, the last term is given by $m^\vee(\lambda_{\Delta},\kappa(\mu))$ in (\ref{diagram:mixed period domain}). Since $\iota_{\mu_U-\mu_{U'}}\lambda_{\Delta U'}=\delta(\iota_{\mu_U}\lambda_{{\Delta}U}, \iota_{\mu_{U'}}\lambda_{\Delta U'})$, we conclude that $\nabla^{GM}_\mu(\lambda_{\Delta}) \in F^1V_{\cO}^\vee$ and $\phi_{\lambda_{\Delta}}$ is well-defined. Moreover, Cartan's formula implies that $\nabla^{GM}_\mu(\lambda_{\Delta})$ is cohomologous to $(\iota_{\mu_U}d\lambda_{\Delta}|_U, \iota_{\mu_{U'}}d\lambda_{\Delta}|_{U'}, 0)$, which represents the element $\iota_\mu d\theta$. The conclusion follows from Lemma \ref{lem:hitchin base identification}. 
\end{proof}

\section{Calabi-Yau integrable systems}

\subsection{Construction}
In this section, we shall generalize Smith's elementary modification idea \cite{smith2015quiver} to construct a (semi-polarized) Calabi-Yau integrable system. A similar construction is also used in the work of Abrikosov \cite[Section 6]{abrikosov2018potentials} and Smith \cite{smith2020floer}.
 
First, we describe the construction of a family of Calabi-Yau threefolds. Let $V:= \Tot(K_{\sm}(D)\oplus (K_{\sm}(D))^{n-1}\oplus K_{\sm}(D)) $ and consider the short exact sequence
        \begin{equation*}
            0\to \cO_{\sm}(-D)\xrightarrow{\alpha} \cO_{\sm}\to i_{D*}\cO_D \to 0.
        \end{equation*}
        where we recall $i_D:D \hookrightarrow X$ is the natural inclusion. Suppose $u$ is a local frame of $\cO_{\sm}(-D)$. In terms of a local coordinate $z$ around a point of $D$ where $z=0$, $\alpha(u)$ is represented by $f\cdot u$ where $f$ is a locally defined function that vanishes at $z=0.$ 
        We define an elementary modification $\wv$ of $V$ along the first component: 
        \begin{equation*}
             \wv :=\Tot (K_{\sm}(D-D)\oplus (K_{\sm}(D))^{n-1}\oplus K_{\sm}(D))\to \Tot(K_{\sm}(D)\oplus (K_{\sm}(D))^{n-1}\oplus K_{\sm}(D)) 
        \end{equation*}
        and denote the projection map by $\wpi: \wv\to \sm.$ 
        
        For $b =(b_2(z),...,b_n(z)) \in B= \oplus_{i=2}^n H^0({\sm}, K_{\sm}(D)^{\otimes i})$, we define the threefold $X_b$ as the zero locus of a section in $\Gamma(\wv, \wpi^*K_{\sm}(D)^{\otimes n})$:
        \begin{equation} \label{defining equation}
            X_b:= \{\alpha(x)y - s^n  -\wpi^*b_2(z) s^{n-2} -... -\wpi^*b_n(z) =0 \}\subset \wv
        \end{equation}
        with the projection $\pi_b:X_b \to \Sigma$. Here we denote by $x, y$ and $s$ the tautological sections of the pullback of $K_\sm,(K_\sm(D))^{n-1}$ and $K_\sm(D)$, respectively.
        Note that each term in the equation (\ref{defining equation}) is a section of $\wpi^*K_{\sm}(D)^{\otimes n}$. More explicitly, we have
        \begin{align*}
            &x\in \Gamma ( \wv, \wpi^*K_{\sm}), \quad \alpha(x) \in \Gamma (\wv, \wpi^*K_{\sm}(D)), \quad 
             y\in \Gamma ( \wv, \wpi^*(K_{\sm}(D))^{n-1})\\ &
            s\in \Gamma ( \wv, \wpi^*K_{\sm}(D)),\quad  \pi^*b_i \in \Gamma (\wv,\wpi^*(K_{\sm}(D))^i) 
        \end{align*}
        This construction gives rise to a family of quasi-projective threefolds $\pi:\cX \to B$ with the following commutative diagram
        \begin{equation*}
        \begin{tikzcd}
        \cX \arrow[d, "\bm{\pi}"'] \arrow[rd, "\pi"]& \\
        \sm \times B \arrow[r, "pr_2"]& B
        \end{tikzcd}
        \end{equation*}
        where $pr_2:\sm \times B \to B$ is the natural projection and $\pi:=pr_2 \circ \bm{\pi}$. Next, we show that the threefold $X_b$ is indeed a non-singular Calabi-Yau threefold.
        \begin{proposition}
        The threefold $X_b$ has trivial canonical bundle.
        \end{proposition}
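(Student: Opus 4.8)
The plan is to realize $X_b$ as an effective Cartier divisor inside the smooth fourfold $\wv$ and to read off its canonical (dualizing) sheaf from the adjunction formula. By construction $X_b$ is the zero locus of the section
\begin{equation*}
    W_b := \alpha(x)\,y - s^n - \wpi^*b_2\, s^{n-2} - \cdots - \wpi^*b_n
\end{equation*}
of the line bundle $\cL := \wpi^* K_\sm(D)^{\otimes n}$ on $\wv$, so that $\cO_{\wv}(X_b) \cong \cL$. Adjunction then gives $K_{X_b} \cong \left(K_{\wv}\otimes \cL\right)\big|_{X_b}$, and it suffices to show that $K_{\wv}\otimes \cL$ is the pullback of the trivial bundle on $\sm$.

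First I would compute $K_{\wv}$. Writing $\wv = \Tot(\hE)$ with $\hE := K_\sm \oplus (K_\sm(D))^{n-1}\oplus K_\sm(D)$, the relative tangent sequence of the vector bundle $\wpi:\wv\to\sm$,
\begin{equation*}
    0 \to \wpi^*\hE \to T_{\wv}\to \wpi^* T_\sm \to 0,
\end{equation*}
identifies $T_{\wv/\sm}\cong \wpi^*\hE$, whence $K_{\wv} \cong \wpi^*\!\left(K_\sm \otimes \det\hE^\vee\right)$. Since $\det\hE = K_\sm \otimes (K_\sm(D))^{\otimes n}$, the two copies of $K_\sm$ cancel and I obtain $K_{\wv}\cong \wpi^*(K_\sm(D))^{\otimes(-n)}$.

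Combining this with $\cL = \wpi^* K_\sm(D)^{\otimes n}$, the restriction $\left(K_{\wv}\otimes \cL\right)\big|_{X_b}$ becomes $\wpi^*\cO_\sm\big|_{X_b}\cong \cO_{X_b}$, which proves triviality. I would emphasize that this cancellation is precisely what the elementary modification arranges: had we worked inside the unmodified $V$, the first summand would contribute $K_\sm(D)$ rather than $K_\sm$ to the determinant, leaving a residual factor pulled back from $\cO_\sm(-D)$ and no Calabi-Yau structure. The only point genuinely requiring care is that the statement is asserted for every $b\in B$, whereas smoothness of $X_b$ is established only over $B^{ur}$; for possibly singular members one should read $K_{X_b}$ as the dualizing sheaf $\omega_{X_b}$. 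Since $X_b$ is a Cartier divisor in the smooth fourfold $\wv$ it is Gorenstein, and the adjunction isomorphism $\omega_{X_b}\cong \left(K_{\wv}\otimes\cO_{\wv}(X_b)\right)\big|_{X_b}$ remains valid, so the computation goes through verbatim.
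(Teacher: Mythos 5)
Your proof is correct and follows essentially the same route as the paper: adjunction for the hypersurface $X_b \subset \wv$ together with the computation $K_{\wv} \cong \wpi^*\bigl(K_\sm \otimes \det \hE^\vee\bigr) \cong \wpi^*(K_\sm(D))^{\otimes(-n)}$, after which the factor $\wpi^*K_\sm(D)^{\otimes n}$ cancels. Your additional remarks — that the cancellation is exactly what the elementary modification is designed to produce, and that for singular members one should read $K_{X_b}$ as the Gorenstein dualizing sheaf — are correct refinements not spelled out in the paper, but the core argument is identical.
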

        \begin{proof}
           By the adjunction formula, 
        \begin{equation*}
            K_{X_b} = K_{\wv} \otimes \wpi^* (K_{\sm}(D))^{\otimes n}|_{X_b}.
        \end{equation*}
        where $\wpi: \wv\to {\sm}.$ Note that 
        \begin{equation*}
            K_{\wv} = \wpi^*\det ( \wv^\vee ) \otimes \wpi^*K_{\sm} \cong \wpi^*(K_{\sm}^{-n-1}(-nD)) \otimes \wpi^*K_{\sm} \cong \wpi^*(K_{\sm}^{-n}(-nD)).
        \end{equation*}
        So it follows that 
        \begin{equation*}
            K_{X_b} = \wpi^*(K_{\sm}^{-n}(-nD)) \otimes \wpi^* (K_{\sm}(D))^{\otimes n}|_{X_b} \cong \cO_{X_b}.
        \end{equation*}
        \end{proof}
    \begin{proposition}
       For each $b\in B^{ur}$, the threefold $X_b$ is non-singular. 
    \end{proposition}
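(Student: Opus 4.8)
The plan is to verify non-singularity through the Jacobian criterion, working in local trivializations of $\wv$ over $\sm$ and exploiting the two defining properties of $B^{ur}$ — smoothness of the spectral curve and unramifiedness over $D$ — in two separate regimes. Since smoothness is local, I would fix a point of $\sm$ with local coordinate $z$, trivialize the three line bundles $K_\sm$, $(K_\sm(D))^{n-1}$ and $K_\sm(D)$, and denote the corresponding fiber coordinates by $x,y,s$. Recalling that $\alpha:\cO_\sm(-D)\to\cO_\sm$ is locally multiplication by a function vanishing to order one along $D$, the local defining function of $X_b$ takes the shape
\[
F(z,x,y,s) = f(z)\,xy - P(z,s), \qquad P(z,s):= s^n + b_2(z)\,s^{n-2} + \cdots + b_n(z),
\]
where $f$ is a local equation of $D$, vanishing to order one along $D$ and invertible elsewhere. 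A point of $X_b$ is singular exactly when $F$ and all four partials $\partial_z F,\partial_x F,\partial_y F,\partial_s F$ vanish simultaneously, and the goal is to rule this out at every point of $\{F=0\}$.

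Away from $D$ (where $f(z)\neq 0$), the equations $\partial_x F = f(z)\,y = 0$ and $\partial_y F = f(z)\,x = 0$ force $x=y=0$. On this locus one has $F=-P(z,s)$, $\partial_s F = -\partial_s P$ and $\partial_z F = -\partial_z P$, so a singular point would require $P=\partial_s P=\partial_z P=0$. This is precisely the condition that the spectral curve $\ts_b=\{P=0\}\subset\Tot(K_\sm(D))$ be singular at $(z,s)$. Since $b\in B^{ur}$ guarantees $\ts_b$ is smooth, no such point exists over the complement of $D$.

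Over $D$ (at $z=0$, where $f(0)=0$) the first two partials $\partial_x F = f(0)\,y$ and $\partial_y F = f(0)\,x$ vanish identically and so carry no information; this is exactly the effect of the elementary modification. Here the fiber equation reduces to $F=-P(0,s)=0$, forcing $s$ to equal some root $\lambda_i$ of $P(0,\cdot)$ while $x,y$ remain free. The decisive point is that unramifiedness of $\ts_b$ over $D$ means the $n$ roots $\lambda_i$ are distinct, hence each is simple, so $\partial_s F = -\partial_s P(0,\lambda_i)\neq 0$. Thus $dF\neq 0$ at every point of $X_b$ lying over $D$, and $X_b$ is smooth there as well.

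The main obstacle — and the place where care is required — is precisely this locus over $D$: because the modification annihilates $\partial_x F$ and $\partial_y F$ identically, smoothness there cannot be read off from the spectral curve in the naive way, and instead rests entirely on the non-vanishing of $\partial_s P$, which comes from the distinctness of the residual eigenvalues. Collecting the two cases shows that $dF$ is nowhere zero on $\{F=0\}$, so $X_b$ is a smooth hypersurface in the smooth fourfold $\wv$; since the singular locus is intrinsic, the local computations patch across trivializations and the argument is complete.
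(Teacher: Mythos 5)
Your proof is correct and takes essentially the same approach as the paper: the same local model $f(z)\,xy - P(z,s)=0$ in a trivialization of $\wv$, the same Jacobian criterion, with smoothness of the spectral curve ruling out singular points away from $D$ and unramifiedness over $D$ handling the fibers over $D$. The only difference is organizational---over $D$ you directly exhibit the nonvanishing partial $\partial_s F$ from simplicity of the roots of $P(0,\cdot)$, whereas the paper argues by contradiction via the branch locus---but the underlying computation is identical.
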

    \begin{proof}
            This is a local statement, so we can restrict to neighbourhoods in $\sm.$ 
            Around a point of $D$ with local coordinate $z$, the local model of $X_b$ is 
            \begin{equation*}
                \{f(z) xy - s^n  -\widetilde{b}_2(z) s^{n-2} -... -\widetilde{b}_n(z) =0 \}\subset \C^3_{(x,y,s)} \times \C_z,
            \end{equation*}
            where $\widetilde{b}_i$ are now functions of $z$, and $f(z)$ is function with zero only at $z=0.$ We check smoothness by examining the Jacobian criterion. The equation
            \begin{equation}\label{spectral_cover_equation}
                \frac{\partial}{\partial s} \big(s^n  -\widetilde{b}_2(z) s^{n-2} -... -\widetilde{b}_n(z) \big ) = 0
            \end{equation}
            implies that, for each $z$, the equation $s^n  -\widetilde{b}_2(z) s^{n-2} -... -\widetilde{b}_n(z)=0$ must have repeated solutions, this happens only when $z$ is at the branch locus of the spectral curve associated to $b$. The remaining equations in the Jacobian criterion are
            \begin{equation*}
                f(z) y= 0, \quad f(z)x =0 , \quad f'(z) xy +\frac{\partial}{\partial z}\big(s^n  -\widetilde{b}_2(z) s^{n-2} -... -\widetilde{b}_n(z)\big)=0
            \end{equation*}
            When $x=y=0$, the equation $\frac{\partial}{\partial z}\big(s^n  -\widetilde{b}_2(z) s^{n-2} -... -\widetilde{b}_n(z)\big)=0$ has no solution since we assume that the spectral curve associated to $b$ is smooth and equation (\ref{spectral_cover_equation}) has a solution. Hence, it must be the case that $x\neq 0$ or $y\neq 0$ which implies that $f(z)=0$ or equivalently $z=0$. However, since we assume $b\in B^{ur}$ which means that the spectral curve must be unramified over $D$, this is a contradition and so $X_b$ is non-singular around $D.$ 
            
            Away from $D$, a similar argument shows that the threefold is non-singular over the local neighbourhood. Hence, $X_b$ is non-singular everywhere. 
            
    \end{proof}
    Again, by examining the defining equation (\ref{defining equation}), we can list the types of fibers of the map $\pi_b:X_b \to \sm$: 
    \begin{itemize}
        \item For $p\in D$ with coordinate $z=0$, the fiber is defined by the equation \\ $s^n  -\widetilde{b}_2(z) s^{n-2} -... -\widetilde{b}_n(z)=0$, i.e. disjoint union of $n$ copies of $\C^2.$ 
        \item For a critical value $p$ of $\pi_b$, the fiber is defined by $xy - \prod_{i=1}^m(s-s_i)^{k_i} $ where $\sum_{i=1}^m k_i = n$ $(m<n)$. Hence, the fiber is a singular surface with $A_{k_i-1}$-singularity at $s_i$. 
        \item For $p$ away from $D$ and the discriminant locus of $\pi_b$, the fiber is defined by $(xy -s^n ) -\widetilde{b}_2(z) s^{n-2} -... -\widetilde{b}_n(z)= 0$ and smooth, so it is isomorphic to a smooth fiber of the universal unfolding of $A_{n-1}$-singularity $\C^2/\Z_n$.
    \end{itemize}
Consider the subfamily $\pi^\circ:\cX^\circ \to B$ of the family of quasi-projective Calabi-Yau threefolds $\mathbf{\pi}:{\cX} \to B$, defined via the following commutative diagram
    \begin{equation*}
    \begin{tikzcd}
        (\cX^\circ): = \cX \setminus \bm{\pi}^{-1}(D\times B) \arrow[r, hook] \arrow[rd, "\pi^\circ:=\pi|_{\cX^\circ}"']& \cX \arrow[d, "\pi"] \\
        & B.
    \end{tikzcd}
    \end{equation*}
    whose fiber at $b \in B$ is $X_b^\circ:= X_b \setminus \pi_b^{-1}(D)$. We write $(-)^{ur}$ for the restriction of the family over $B^{ur} \subset B$. For our purposes later, it is useful to study the relation between the family $(\pi^\circ)^{ur}:(\cX^\circ)^{ur}\to B^{ur}$ and the family of Calabi-Yau threefolds in the holomorphic case \cite{diaconescu2006intermediate}\cite{beck2017hitchin} obtained by gluing Slodowy slices. 

Recall that in the classical case \cite{slodowy1980four}, the Slodowy slice $S\subset \fg$ provides a semi-universal $\C^*$-deformation $\sigma:S\to \ft /W$ of simple singularities via the adjoint map $\sigma:\fg\to \ft/W$. However, if we denote by $d_j$ the standard ($\C^*$-action) weights of the generators of the coordinate ring $\C[\chi_1,...,\chi_j]$ of $\ft/W$, then the weights on $\C[\chi_1,...,\chi_j]$ must be chosen as $2d_j$ for $\sigma$ to be $\C^*$-equivariant (see \cite[Remark 2.5.3]{beck2020folding}, \cite{slodowy1980four}). 

Now we choose a theta characteristic $L$ on $\sm$, i.e. $L^2 \cong K_\sm$. Since $L^2|_{\sm^\circ}\cong K_{\sm}|_{\sm^\circ}\cong K_{\sm}(D)|_{\sm^\circ}$, we have an isomorphism of associated bundles over $\sm^\circ$
\begin{equation*}
    L|_{\sm^\circ}\times_{\C^*} \ft/W \cong  K_{\sm}(D)|_{\sm^\circ}\times_{\C^*} \ft/W
\end{equation*}
where the weights of the $\C^*$-action on both sides are different: the left hand side has weights $2d_j$ and the right hand side has weights $d_j.$ As the map $\sigma:S\to \ft/W$ is $\C^*$-equivariant, we can glue it along $\Tot(L)$ to obtain 
\begin{equation*}
    \bm{\sigma} : \bm{S}: = \Tot (L\times _{\C^*}S) \to \Tot(L\times_{\C^*} \ft/W)
\end{equation*}
and its restriction
\begin{equation*}
    \bm{\sigma}|_{\sm^\circ} : \bm{S}|_{\sm^\circ}: = \Tot (L\times _{\C^*}S)|_{\sm^\circ} \to \Tot(L|_{\sm^\circ}\times_{\C^*} \ft/W)\cong \Tot(K_\sm(D)|_{\sm^\circ}\times_{\C^*}\ft/W) = \bm{U}|_{\sm^\circ} .
\end{equation*}
Pulling back under the evaluation map from $\Sigma \times B$, one gets the two families of quasi-projective threefolds $\pi':\cY \to B$ and ${\pi'}^{\circ}:=\pi'|_{\cY^\circ}:\cY^\circ \to B$ as follows:
\begin{equation}\label{CYfamilies}
\begin{tikzcd}
    \cY \arrow[r] \arrow[d, "\bm{\pi'}"] \arrow[dd, bend right=60, "\pi'"'] & \bm{S} \arrow[d, "\bm{\sigma}"] \\
    \Sigma \times B \arrow[r, "ev"] \arrow[d, "pr_2"] & \bm{U} \\
    B &
    \end{tikzcd},\quad 
    \begin{tikzcd}
    (\cY^\circ) \arrow[r] \arrow[d, "\bm{\pi'}|_{\cY^\circ}"] \arrow[dd, bend right=60, "{\pi'}^\circ"'] & \bm{S}|_{\sm^\circ} \arrow[d, "\bm{\sigma}|_{\sm^\circ}"] \\
    \Sigma^\circ \times B \arrow[r, "ev"] \arrow[d, "pr_2"] & \bm{U}|_{\sm^\circ} \\
    B &
    \end{tikzcd}
\end{equation}
We define $\cY^{ur}$ and $(\cY^\circ)^{ur}$ as the restriction over $B^{ur} \subset B$.
\begin{lemma}\label{gluingmodel}
 We have an isomorphism of the two families over $B^{ur}$
 \begin{equation*}
     \begin{tikzcd}
     (\cY^\circ)^{ur} \arrow[rr, "\cong"] \arrow[rd, "({\pi'}^\circ)^{ur}"'] & & (\cX^\circ)^{ur} \arrow[ld, "(\pi^\circ)^{ur}"] \\
     & B^{ur} &
     \end{tikzcd}
 \end{equation*}
In particular, $Y_b^\circ\cong X_b^\circ$ where $Y_b^\circ$ is a member of the family $\cY^\circ$.
\end{lemma}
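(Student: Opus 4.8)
The plan is to show that, once restricted to $\sm^\circ$, both families are literally the same family of universal unfoldings of the $A_{n-1}$ surface singularity, twisted by line bundles over $\sm^\circ$; the only real content is matching the twisting data, which is precisely what the theta characteristic $L$ is designed to do. I would first make the elementary-modification side explicit. Over $\sm^\circ$ the map $\alpha\colon\cO_\sm(-D)\to\cO_\sm$ is an isomorphism, since its local expression $f$ vanishes only along $D$; hence $\wv|_{\sm^\circ}\cong V|_{\sm^\circ}$ and, after absorbing the nowhere-vanishing factor $f$ into the fibre coordinate, the defining equation \eqref{defining equation} of $X_b$ restricts over $\sm^\circ$ to
\begin{equation*}
    X_b^\circ=\{\,xy-s^n-b_2(z)s^{n-2}-\cdots-b_n(z)=0\,\},
\end{equation*}
where $s,y,x$ are the tautological sections of $\wpi^*K_\sm(D)$, $\wpi^*(K_\sm(D))^{n-1}$ and $\wpi^*K_\sm(D)$ respectively (using $K_\sm|_{\sm^\circ}\cong K_\sm(D)|_{\sm^\circ}$), and $b_j\in\Gamma(\sm^\circ,(K_\sm(D))^{\otimes j})$. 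Thus $X_b^\circ$ is the total space over $\sm^\circ$ of the fibrewise universal unfolding of $xy=s^n$, with the coordinates and parameters twisted by the indicated powers of $K_\sm(D)$.

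Next I would identify the Slodowy side in the same terms. For $\fg=\mathfrak{sl}_n$ the adjoint quotient $\sigma\colon S\to\ft/W$ of the subregular Slodowy slice is the semi-universal $\C^*$-deformation of the $A_{n-1}$ singularity, and in suitable coordinates its total space is $\{xy=s^n-\chi_2 s^{n-2}-\cdots-\chi_n\}$, with $\chi_j$ the degree-$d_j$ invariant ($d_j=j$) and induced $\C^*$-weights $2d_j$ on $\chi_j$ and $2$ on $s$, the weights of $x,y$ summing to $2n$ (so every monomial in the equation has weight $2n$). Pulling $\bm{\sigma}|_{\sm^\circ}$ back along the evaluation map in \eqref{CYfamilies}, the fibre of $\cY^\circ$ over $b$ is $\bm{\sigma}|_{\sm^\circ}^{-1}(b_2(z),\dots,b_n(z))$, i.e. the surface $\{xy=s^n-b_2(z)s^{n-2}-\cdots-b_n(z)\}$, now with coordinates and parameters twisted by powers of $L$.

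The crux is to match the two twistings. On the $\cY^\circ$ side the parameter $\chi_j$, of weight $2d_j$, becomes a section of $L^{\otimes 2d_j}$, whereas on the $X_b^\circ$ side $b_j$ is a section of $(K_\sm(D))^{\otimes d_j}$; since $L^2|_{\sm^\circ}\cong K_\sm(D)|_{\sm^\circ}$ we get $L^{\otimes 2d_j}|_{\sm^\circ}\cong (K_\sm(D))^{\otimes d_j}|_{\sm^\circ}$, so the parameter bundles agree. Likewise the fibre coordinates $s,x,y$, of weights $2,2,2(n-1)$ and hence sections of $L^{\otimes 2}$, $L^{\otimes 2}$ and $L^{\otimes 2(n-1)}$, correspond under $L^2\cong K_\sm(D)$ to the tautological sections of $K_\sm(D)$, $K_\sm(D)$ and $(K_\sm(D))^{n-1}$ used in the $X_b^\circ$-model. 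This gives a fibrewise isomorphism of the two unfolding surfaces compatible with the twists; assembling it over $\sm^\circ$ and over the parameter $b$ yields an isomorphism $\cY^\circ\cong\cX^\circ$ over $\Sigma^\circ\times B$ commuting with the projections to $B$. Restricting over $B^{ur}\subset B$ gives the claimed isomorphism of families, and restricting further to a single $b\in B^{ur}$ gives $Y_b^\circ\cong X_b^\circ$.

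I expect the main obstacle to be the bookkeeping of the $\C^*$-weights, and in particular the factor-of-two discrepancy between the natural grading $d_j$ on $\ft/W$ and the Slodowy grading $2d_j$: one must verify that the square root $L$ of $K_\sm(D)$ over $\sm^\circ$ absorbs this factor uniformly on all of $\chi_j,s,x,y$, and that the explicit weight assignment $(2,2(n-1))$ for $(x,y)$ chosen in the elementary-modification model agrees, up to the isomorphism, with the surface-coordinate weights coming from the $\mathfrak{sl}_2$-triple defining the slice. The cleanest way to settle this is to pin down the isomorphism directly on the tautological sections $x,y,s$, rather than through an abstract comparison of the two $\C^*$-gradings.
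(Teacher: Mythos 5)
Your proposal is correct and takes essentially the same route as the paper: trivialize the elementary modification over $\sm^\circ$, identify the Slodowy-slice family with the explicit hypersurface unfolding of the $A_{n-1}$-singularity, and match the two twists via $L^2|_{\sm^\circ}\cong K_\sm(D)|_{\sm^\circ}$ together with the doubling of the $\C^*$-weights. The one step you flag as the main obstacle --- compatibility of the slice coordinates and their weights with the explicit model --- is settled in the paper not by pinning down coordinates on the slice but by invoking the uniqueness of the semi-universal $\C^*$-deformation of a simple singularity (with the weights chosen as in \eqref{eq:weights}), which disposes of the weight bookkeeping abstractly.
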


\begin{proof}

For type $A$, we have a semi-universal $\C^*$-deformation of $A_{n-1}$ singularities (see \cite[Theorem 1]{katz1992gorenstein})  as follows:
\begin{equation}
\begin{aligned}
\sigma': H:=\{xy- s^n - b_2s^{n-2}- ...- b_n = 0\} \subset  \C^3 \times \C^{n-1} &\to \C^{n-1}\cong \ft/W \\ 
(x,y,s,b_2,...,b_n) &\mapsto (b_2,...,b_n)
\end{aligned}
\end{equation}
The map $\sigma'$ is $\C^*$-equivariant if we endow the following $\C^*$-actions on $\C^3$ and $\C^{n-1}$:
\begin{equation}\label{eq:weights}
    (x,y,s)\mapsto (\lambda^2 x, \lambda^{2(n-1)}y,\lambda^2 s) , \quad (b_2,...,b_n)\mapsto (\lambda^4b_2,...,\lambda^{2n}b_n).
\end{equation}
Since the semi-universal $\C^*$-deformation of a simple singularity is unique up to isomorphism, the two deformations $\sigma$ and $\sigma'$ are isomorphic. In other words, the Slodowy slice $S$ contained in $\fg$ is isomorphic to the hypersurface $H$ in $\C^3\times \C^{n-1}$ as semi-universal $\C^*$-deformation. Note that it is important to choose the $\C^*$-action on $\C^{n-1}\cong \ft/W$ and $\C^3$ as above for $S$ and $H$ to be isomorphic as $\C^*$-deformation (see \cite[Remark 2.5.3]{beck2020folding}). 

Next, let's turn to the global situation. We again have the isomorphism of associated bundles
\begin{equation*}
    L|_{\sm^\circ}\times_{\C^*} \C^* \cong  K_{\sm}(D)|_{\sm^\circ}\times_{\C^*} \C^*
\end{equation*}
with the weights of the $\C^*$-action on the left hand side being twice the weights on the right hand side. Hence, the associated bundle $L|_{\sm^\circ}\times _{\C^*} \C^3$ is  
\begin{equation*}
    L^2|_{\sm^\circ}\oplus L^{2(n-1)}|_{\sm^\circ}\oplus L^2|_{\sm^\circ} \cong (K_\sm(D)\oplus K_\sm(D)^{\otimes n-1}\oplus K_\sm(D))|_{\sm^\circ}\cong V|_{\sm^\circ}
\end{equation*}
Also, since the elementary modification is an isomorphism i.e. $ V|_{\sm^\circ}\cong W|_{\sm^\circ}$ away from $D$, the previous construction (\ref{defining equation}) of the family $(\pi^\circ)^{ur}:(\cX^\circ)^{ur} \to B^{ur}$ as a family of hypersurfaces in the total space of $W|_{\sm^\circ}$  is equivalent to the construction as the pullback of the gluing of $H$ and $\sigma'$ over $K_\sm (D)|_{\sm^\circ}$: 
\begin{equation}\label{eq:Slodowygluing}
    \begin{tikzcd}
    (\cX^\circ)^{ur} \arrow[r] \arrow[d, "(\bm{\pi^\circ})^{ur}"] & \bm{H}|_{\sm^\circ} \subset \Tot(K_\sm(D)|_{\sm^\circ}\times_{\C^*} \C^3) \times \Tot(K_\sm(D)|_{\sm^\circ}\times_{\C^*} \ft/W )\arrow[d,"\bm{(\sigma')}|_{\sm^\circ}"] \\
    \Sigma^\circ \times B^{ur} \arrow[r, "ev"] & \bm{U}|_{\sm^\circ} =\Tot(K_\Sigma(D)|_{\Sigma^\circ} \times_{\C^*} \ft/W) 
    \end{tikzcd}
\end{equation}
where we define $\bm{\sigma'}:\bm{H} = \Tot(K_\sm(D)\times_{\C^*} H)\to \bm{U}$ and all the $\C^*$-actions in the diagram are understood as having half the weights in (\ref{eq:weights}). By the argument that $S$ and $H$ are isomorphic as $\C^*$-deformation, we have that $\bm{\sigma}|_{\sm^\circ}:\bm{S}|_{\sm^\circ}= \Tot(L|_{\sm^\circ}\times_{\C^*} S)  \to \bm{U}|_{\sm^\circ}$ and $\bm{\sigma'}|_{\sm^\circ}: \bm{H}|_{\sm^\circ}\to \bm{U}|_{\sm^\circ}$ are also isomorphic. By pulling back this isomorphism along the evaluation map to $\sm^\circ \times B^{ur}$, we get the isomorphism $(\cY^\circ)^{ur} \cong (\cX^\circ)^{ur}. $

\end{proof}

\subsection{Calabi-Yau integrable systems}
    Having constructed the family of Calabi-Yau threefolds $\pi^{ur}:\cX^{ur}\to B^{ur}$, our goal in this section is to show that the associated relative intermediate Jacobian fibration gives rise to a semi-polarized integrable system, which will again be called \textit{Calabi-Yau integrable system}. The idea is to view it as a variation of $\Z$-mixed Hodge structures of type $\{(-1,-1),(-1,0),(0,-1)\}$ and find a suitable abstract Seiberg-Witten differential as in Section 2.2. We begin by studying the $\Z$-mixed Hodge structures on both $H_3(X_b, \C)$ and $H^3(X_b, \C)$.
    \begin{proposition}\label{Hodge type argument}
        For $b \in B^{ur}$, the third homology group $H_3(X_b,\Z)$ admits a $\Z$-mixed Hodge structure of
        type  $\{(-2,-2),(-2,-1),(-1,-2)\}$. Moreover, the third cohomology group $H^3(X_b,\Z)$ admits a $\Z$-mixed Hodge structure of type $\{(1,2),(2,1),(2,2)\}$.
    \end{proposition}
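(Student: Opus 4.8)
The plan is to reduce everything to the fibration $\pi_b : X_b \to \Sigma$ and to the vanishing cohomology of the $A_{n-1}$-Milnor fibre, treating the divisor $D$ via a Gysin comparison. First I would replace $X_b$ by the open threefold $X_b^\circ = X_b \setminus \pi_b^{-1}(D)$. By the list of fibre types recorded just before the statement, and since $b \in B^{ur}$ is unramified over $D$, the removed locus $Z := \pi_b^{-1}(D)$ is a disjoint union of $nd$ copies of $\C^2$; in particular it is a \emph{smooth} divisor of the smooth threefold $X_b$ with $H^1(Z)=H^2(Z)=0$. The Gysin sequence of the pair, a sequence of mixed Hodge structures,
\begin{equation*}
    \cdots \to H^1(Z)(-1) \to H^3(X_b) \to H^3(X_b^\circ) \to H^2(Z)(-1) \to \cdots,
\end{equation*}
then gives an isomorphism $H^3(X_b) \cong H^3(X_b^\circ)$ of mixed Hodge structures, so it suffices to compute the latter.

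Next I would run the Leray spectral sequence of the restricted family $\pi_b^\circ : X_b^\circ \to \Sigma^\circ$. By Lemma \ref{gluingmodel} this family is isomorphic to the Slodowy-slice model $Y_b^\circ \to \Sigma^\circ$, whose generic fibre is the smooth $A_{n-1}$-Milnor fibre $M$: an affine surface with $H^0(M)=\Q$, $H^1(M)=0$, and $H^2(M)\cong \Q^{n-1}$. Because the fibres are affine of complex dimension two, $R^q(\pi_b^\circ)_*\Q=0$ for $q>2$; and because $\Sigma^\circ$ is affine, $H^{\ge 2}(\Sigma^\circ,-)=0$. Hence among the terms $E_2^{p,q}$ with $p+q=3$ only $E_2^{1,2}$ survives (the terms $E_2^{2,1}$ and $E_2^{3,0}$ die by affineness of $\Sigma^\circ$, and $E_2^{0,3}$ by affineness of the fibres), and the spectral sequence degenerates to
\begin{equation*}
    H^3(X_b^\circ) \cong H^1\!\left(\Sigma^\circ,\, R^2(\pi_b^\circ)_*\Q\right).
\end{equation*}
Over $\Sigma^\circ$ the local system $R^2(\pi_b^\circ)_*\Q$ has stalk the $A_{n-1}$ root lattice, which I would identify with $\ker(\Tr:\overline{p}_{b*}\Z \to \Z)\otimes\Q$, with monodromy acting through the Weyl group; thus it is canonically $\cK_b|_{\Sigma^\circ}\otimes\Q(-1)$ in the notation of Lemma \ref{rem:Prym sheaf}. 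The crucial Hodge-theoretic input here is that $H^2(M)$ is \emph{pure of Hodge--Tate type} $(1,1)$: $M$ is the smoothing of a rational double point, carries finite monodromy, and its $H^2$ is spanned by the classes of the vanishing $2$-spheres, so $R^2(\pi_b^\circ)_*\Q$ is a weight-two, type-$(1,1)$ variation.

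It then remains to read off the mixed Hodge structure of $H^1(\Sigma^\circ,\cK_b|_{\Sigma^\circ})\otimes\Q(-1)$. For a weight-zero polarizable variation $\cL$ on the smooth affine curve $\Sigma^\circ$, the group $H^1(\Sigma^\circ,\cL)$ has weights in $\{1,2\}$: the piece $\Gr^W_1$ is the pure weight-one structure of type $\{(1,0),(0,1)\}$ coming from the interior, while $\Gr^W_2$ is of type $(1,1)$ and is supplied by the local monodromy invariants at the punctures $D$ — here of total rank $(n-1)d$, since the cover is unramified over $D$, matching the rank of the affine-torus part of $\Prym(\ts^\circ_b,\sm^\circ)$. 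Twisting by $\Q(-1)$ shifts every bidegree by $(1,1)$ and yields exactly the type $\{(2,1),(1,2),(2,2)\}$, proving the cohomological claim. The homological statement then follows immediately by duality, since $H_3(X_b,\Z)$ carries the dual mixed Hodge structure of $H^3(X_b,\Z)$, whose type is the negation $\{(-1,-2),(-2,-1),(-2,-2)\}$.

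The main obstacle is the step identifying $R^2(\pi_b^\circ)_*\Q$ with $\cK_b(-1)$: one must establish the $(1,1)$-purity of the $A_{n-1}$-Milnor-fibre cohomology and control its extension across the branch locus, so that the global section computation on $\Sigma^\circ$ really sees only the weight-one interior class and the weight-two puncture class. Once this is in place, the weight bookkeeping is forced, and in particular the weight-four class of type $(2,2)$ — the Hodge-theoretic shadow of the affine torus — emerges precisely from the puncture contribution $\Gr^W_2 H^1(\Sigma^\circ,\cK_b|_{\Sigma^\circ})$.
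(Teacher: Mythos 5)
Your proposal reaches the right conclusion, but by a genuinely different route from the paper's own proof of this proposition. The paper never runs a Leray spectral sequence here: it dualizes at the outset (Poincar\'e duality reduces the claim to showing $H^3_c(X_b^\circ,\Z)$ has type $\{(1,1),(1,2),(2,1)\}$), identifies $X_b^\circ\cong Y_b^\circ$ with the Slodowy-slice model via Lemma \ref{gluingmodel}, and then uses the compactly supported long exact sequence of the pair $(Y_b,\pi_{Y_b}^{-1}(D))$, quoting \cite[Lemma 3.1]{diaconescu2006intermediate} for the purity of $H^3_c(Y_b,\C)$ of type $\{(1,2),(2,1)\}$; the $(1,1)$-classes enter as the cokernel of $H^2_c(Y_b)\to H^2_c(\pi_{Y_b}^{-1}(D))$. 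You instead compute ordinary $H^3(X_b)$ directly: Gysin to pass to $X_b^\circ$, Leray over $\sm^\circ$, identification of $R^2(\pi_b^\circ)_*\Q$ with the Prym sheaf $\cK_b|_{\sm^\circ}(-1)$, and the weight theory of $H^1$ of an open curve. This is essentially the strategy the paper itself deploys only later: Steps 1--4 of the proof of Theorem \ref{localDDP} are the compactly supported version of exactly your computation, with the Hodge type then read off from Lemma \ref{rem:Prym sheaf}. So your route front-loads the finer theorem and obtains the proposition as a byproduct, at the cost of needing the sheaf identification up front; the paper's route buys a quick proof of the proposition from purity results already in the literature, before any of the harder identifications are made.

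Two points in your write-up need repair, though neither is fatal to the type statement. First, $R^2(\pi_b^\circ)_*\Q$ is \emph{not} a local system on $\sm^\circ$: it is only constructible, dropping rank at the branch points of the spectral cover, and it agrees with $j_*$ of its restriction to the complement $\sm^1$ of the branch locus (this is the content of the $j_*j^*=\mathrm{id}$ statements proved in Lemma \ref{lem:spectraltocameral}). Your ``weight-zero polarizable variation on $\Sigma^\circ$'' lemma therefore has to be applied after embedding $H^1(\sm^\circ, j_*\cL)\hookrightarrow H^1(\sm^1,\cL)$, where $\cL$ is the genuine local system on $\sm^1$; since a sub-mixed-Hodge-structure inherits weights and types, this still yields weights in $\{1,2\}$ with $\Gr^W_1$ of type $\{(1,0),(0,1)\}$ and $\Gr^W_2$ of type $(1,1)$, which is all the proposition requires. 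Second, your identification of $\Gr^W_2$ with the full space of local monodromy invariants at $D$, of rank $(n-1)d$, is not automatic: in general $\Gr^W_2 H^1$ is only the \emph{kernel} of the residue map from the puncture invariants into $H^2$ of the compactified curve with coefficients in the extended sheaf --- for the constant sheaf this kernel has rank $d-1$, not $d$. Here the correction term vanishes because the coinvariants of the $A_{n-1}$ root lattice under the monodromy (which surjects onto a transitive subgroup of $S_n$, as $\ts_b$ is connected) are rationally zero, so the target $H^2$ vanishes over $\Q$; you assert the rank without this check. Fortunately only the containment of $\Gr^W_2$ in the puncture invariants --- hence only the Hodge type, not the rank --- is needed for the proposition, so this gap affects your parenthetical match with the torus rank of $\Prym(\ts_b^\circ,\sm^\circ)$ rather than the proof itself.
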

    \begin{proof}
    By Poincar\'e duality, it is enough to show that the third compactly supported cohomology group $H^3_c(X_b^\circ, \Z) $ admits a $\Z$-mixed hodge structure of type $\{(1,1),(1,2),(2,1)\}$. We drop the subscript $b$ for simplicity. Recall that by Lemma \ref{CYfamilies} we have the following commutative diagram of fibrations
    \begin{equation*}
        \begin{tikzcd}
        X  \arrow[d, "\pi"] & X^\circ \arrow[l, hook' ]\arrow[r, "\cong"] \arrow[d, "{\pi}|_{X^\circ}"] & Y^\circ \arrow[r, hook] \arrow[d, "{\pi_Y}|_{Y^\circ}"] & Y \arrow[d, "\pi_Y"] \\
        \sm  & \sm^\circ \arrow[l, hook']\arrow[r, equal] & \sm^\circ \arrow[r, hook] & \sm
        \end{tikzcd}
    \end{equation*}
    
    First, consider the long exact sequence of compactly supported cohomologies associated to the pair $(X, \pi^{-1}(D))$
    \begin{equation*}
        \cdots \to H_c^2(\pi^{-1}(D),\Z) \to H_c^3(X^\circ,\Z) \to H_c^3(X,\Z) \to H_c^3(\pi^{-1}(D),\Z) \to \cdots
    \end{equation*}
    As $H_c^2(\pi^{-1}(D),\Z)=H_c^3(\pi^{-1}(D),\Z)=0$, we have an isomorphism of $\Z$-mixed Hodge structures
    \begin{equation} \label{eq:3 cohomology equal compactly supported}
        H^3_c(X,\Z) \cong H^3_c(X^\circ,\Z)
    \end{equation}
    Combining with Lemma \ref{gluingmodel}, we have the isomorphism of $\Z$-mixed Hodge structures $H^3_c(X^\circ,\Z) \cong H^3_c(Y^\circ, \Z)$.

    From now on, as we are mainly interested in the Hodge type, we work over the coefficient $\C$, instead of $\Z$. Similarly, consider the long exact sequence of compactly supported cohomologies associated to the pair $(Y, \pi_Y^{-1}(D))$
     \begin{equation*}
        \cdots \to H_c^2(Y, \C) \xrightarrow{\phi} H_c^2(\pi_Y^{-1}(D),\C) \to H_c^3(Y^\circ,\C) \to H_c^3(Y,\C) \to H_c^3(\pi_Y^{-1}(D),\C)=0 \to \cdots
    \end{equation*}
    Note that for $p \in D$,  the fiber $\pi_Y^{-1}(p)$ is deformation equivalent to a smooth fiber of the universal unfolding of $A_{n-1}$-singularity $\C^2/\Z_n$ so that $H^2_c(\pi_Y^{-1}(p),\C)$ admits a pure Hodge structure of type $(1,1)$. Also, the morphism $\phi$ is not surjective since $|D| \geq 2$. As argued in \cite[Lemma 3.1]{diaconescu2006intermediate}, $H_c^3(Y,\C)$ has a pure Hodge structure of type $\{(1,2),(2,1)\}$. Therefore, we can conclude that the $\Z$-mixed Hodge structure on $H^3_c(X,\Z)\cong H^3_c(Y^\circ, \Z)$ is of type $\{(1,1),(1,2),(2,1)\}$.
    \end{proof}
    By taking a Tate twist by $-1$, we have the $\Z$-mixed Hodge structure $H_3(X_b, \Z)(-1)$ of type $\{(-1,-1), (-1,0), (0,-1)\}$. The homology version of the second intermediate Jacobian of $X_b$ is defined to be the Jacobian associated to the $\Z$-mixed Hodge structure $H_3(X_b,\Z)(-1)$
    \begin{equation}
        J_2(X_b):=J(H_3(X_b,\Z)(-1))=\frac{H_3(X_b,\C)(-1)}{F^{-1}H_3(X_b,\C)(-1)+H_3(X_b,\Z)(-1)}
    \end{equation}
    
    \begin{rem}
    The third homology group $H_3(X_b,\Z)$  turns out to have a torsion (see Theorem \ref{localDDP}). To get the $\Z$-mixed Hodge structure on the lattice of the semi-abelian variety $J_2(X_b)$, we should consider the $\Z$-mixed Hodge structure on the torsion-free part $H_3(X_b,\Z)_{\textrm{tf}}$.
    \end{rem}
    
    \begin{corollary}
    For $b \in B^{ur}$, the homology version of the second intermediate Jacobian $J_2(X_b)$ is a semi-abelian variety. 
    \end{corollary}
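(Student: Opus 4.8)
The plan is to read off this corollary from Proposition \ref{Hodge type argument} together with the general correspondence, recalled in Section \ref{sec: semipolarized IS} and Appendix \ref{appendix}, between $\Z$-mixed Hodge structures of type $\{(-1,-1),(-1,0),(0,-1)\}$ and semi-abelian varieties. The substantive geometric input is the Hodge-type computation already established; the remaining task is essentially bookkeeping.

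First I would fix the Hodge type and remove torsion. Proposition \ref{Hodge type argument} provides a $\Z$-mixed Hodge structure of type $\{(-2,-2),(-2,-1),(-1,-2)\}$ on $H_3(X_b,\Z)$. Since this group may have torsion (see the preceding remark), I pass to the torsion-free quotient $H_3(X_b,\Z)_{\mathrm{tf}}$, on which the mixed Hodge structure is inherited, and apply the Tate twist by $-1$. The result is a torsion-free $\Z$-mixed Hodge structure of type $\{(-1,-1),(-1,0),(0,-1)\}$, with weight filtration $0=W_{-3}\subset W_{-2}\subset W_{-1}=H_3(X_b,\Z)_{\mathrm{tf}}(-1)$, where $W_{-2}$ is pure of type $(-1,-1)$ and $\Gr^W_{-1}$ is pure of weight $-1$.

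Next I would invoke the Jacobian construction of Section \ref{sec: semipolarized IS}. Applying the (functorial) Jacobian to the short exact sequence $0\to W_{-2}\to H_3(X_b,\Z)_{\mathrm{tf}}(-1)\to \Gr^W_{-1}\to 0$ of mixed Hodge structures exhibits $J_2(X_b)$ as an extension: the subobject $W_{-2}$ of type $(-1,-1)$ satisfies $F^0\cap W_{-2,\C}=0$ and therefore contributes an affine torus $W_{-2,\C}/W_{-2,\Z}\cong(\C^*)^{k}$, while the pure quotient $\Gr^W_{-1}$ contributes a compact complex torus. This is precisely the semi-abelian shape required, and identifies $J_2(X_b)$ with the Jacobian of a mixed Hodge structure of the type for which Appendix \ref{appendix} guarantees a semi-abelian variety.

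The one point deserving care -- and the step I would treat as the crux -- is that the compact torus coming from $\Gr^W_{-1}$ is genuinely an abelian variety, i.e. that $\Gr^W_{-1}$ is polarizable. Here I would appeal to the fact that the mixed Hodge structure on $H^3_c(X_b^\circ,\Z)\cong H^3_c(X_b,\Z)$ is of geometric origin, so that by Deligne's theory its weight-graded pieces are automatically polarizable. Alternatively, tracing the isomorphisms in the proof of Proposition \ref{Hodge type argument}, the relevant weight piece is identified with the pure Hodge structure on $H^3_c(Y_b,\C)$ of type $\{(1,2),(2,1)\}$, whose polarizability is exactly the point established in \cite[Lemma 3.1]{diaconescu2006intermediate} by passing to a smooth projective compactification; transporting this polarization through the Tate twist then completes the proof.
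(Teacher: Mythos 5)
Your proof is correct and takes essentially the same route as the paper, which states this corollary without a separate proof as an immediate consequence of Proposition~\ref{Hodge type argument} (passing to $H_3(X_b,\Z)_{\mathrm{tf}}$, Tate-twisting by $-1$) together with the equivalence in Appendix~\ref{appendix} between torsion-free semi-polarized $\Z$-mixed Hodge structures of type $\{(-1,-1),(-1,0),(0,-1)\}$ and semi-abelian varieties. Your explicit verification that $\Gr^W_{-1}$ is polarizable (via geometric origin, or by tracing back to the pure structure on $H^3_c(Y_b,\C)$ handled in \cite{diaconescu2006intermediate}) fills in a point the paper leaves implicit, but it is the same argument, not a different one.
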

    
    \begin{rem}(Adjoint Type)\label{rem:CYadjointtype}
    Unlike the holomorphic case in \cite{diaconescu2006intermediate}, the cohomological intermediate Jacobian on $H^3(X_b,\Z)$ is not a semi-abelian variety because of the Hodge type (see Appendix \ref{appendix}). This is one of the new features, so we need to consider different data to describe the case of  $PGL(n,\C)$, the adjoint group of type A. It turns out that the right object is a mixture of compactly supported cohomology and ordinary cohomology associated to $\pi_b:X_b \to \Sigma$:
    \begin{equation*}
        H^1_c(\sm,R^2\pi_{b*}\Z)\cong H^1_c(\sm^\circ,R^2\pi^\circ_{b*}\Z).
    \end{equation*}
    which can be considered a different integral structure on $H_c^3(X_b, \C)$ (see \textbf{Step 1} of the proof of Theorem \ref{localDDP}).
    \end{rem}

\begin{proposition}
The relative intermediate Jacobian fibration $\pi^{ur}: \mathcal{J}(\cX ^{ur}/B^{ur}) \to B^{ur}$ is a semi-polarized integrable system. 
\end{proposition}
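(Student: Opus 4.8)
The plan is to realize the fibration as the relative Jacobian of a variation of semi-polarized $\Z$-mixed Hodge structures and then produce an abstract Seiberg-Witten differential, so that Proposition~\ref{SWdifferentials and S-PIS} applies verbatim. First I would package the Hodge-theoretic data. By Proposition~\ref{Hodge type argument} and the corollary following it, for each $b \in B^{ur}$ the lattice $H_3(X_b,\Z)_{\textrm{tf}}(-1)$ carries a $\Z$-mixed Hodge structure of type $\{(-1,-1),(-1,0),(0,-1)\}$, and as $b$ varies these assemble into a variation $\mathsf{V}=(V_\Z, W_\bullet V_\Z, F^\bullet V_\cO)$ over $B^{ur}$ with $V_\Z$ the local system $b \mapsto H_3(X_b,\Z)_{\textrm{tf}}(-1)$. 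The weight filtration is $W_{-2}\subset W_{-1}=V_\Z$, and the graded piece $\Gr^W_{-1}V_\Z$ is the Prym-type lattice of the compact fibre $Y_b$ of the gluing model, hence inherits a polarization from the compact Calabi-Yau $Y_b$; this provides the semi-polarization $Q$ on $\mathsf{V}$. Directly from the definition of $J_2(X_b)$ one has $\cJ(\cX^{ur}/B^{ur})=\cJ(\mathsf{V})$ as families over $B^{ur}$.

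By Proposition~\ref{SWdifferentials and S-PIS} it then suffices to exhibit an abstract Seiberg-Witten differential, i.e.\ a global section $\lambda \in H^0(B^{ur},V^\vee_\cO)$ of the dual bundle, whose fibre is $H^3(X_b,\C)$ (of type $\{(0,1),(1,0),(1,1)\}$ under the convention of Section~\ref{sec: semipolarized IS}), such that $\phi_\lambda:\mu \mapsto \nabla^{GM}_\mu \lambda$ is an isomorphism $TB^{ur} \xrightarrow{\cong} F^1 V^\vee_\cO$. I would take $\lambda_b$ to be the cohomology class of the Poincar\'e residue of the meromorphic volume form on $\wv$ along the hypersurface $X_b$ of~(\ref{defining equation}); since $K_{X_b}\cong \cO_{X_b}$ this residue is the holomorphic volume form $\Omega_b$, whose class lies in $F^1 V^\vee_\cO = F^2 H^3(X_b,\C)$ (the top nonvanishing step of the Hodge filtration, as the type forces $F^3=0$). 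These classes vary holomorphically in $b$, so $\lambda=(\lambda_b)$ is the required global section.

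The hard part will be verifying that $\phi_\lambda$ is an isomorphism, and in particular that $\nabla^{GM}_\mu\lambda$ genuinely lands in $F^1 V^\vee_\cO$ rather than merely in $F^0 V^\vee_\cO$ allowed by Griffiths transversality. Here I would invoke the gluing model of Lemma~\ref{gluingmodel}, which identifies $X_b^\circ$ with the fibre $Y_b^\circ$ of the family pulled back from the glued Slodowy slices; in particular the deformation of $X_b$ is governed by the $W$-invariant deformations of the cameral curve $\cs_b$ inside $\Tot(\ft\otimes K_\sm(D))$, factoring through the relative Kodaira-Spencer map $\kappa$ of diagram~(\ref{diagram:mixed period domain}). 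Following \cite[Lemma 3.1]{diaconescu2006intermediate} and the logarithmic \v{C}ech--Cartan computation of \cite[Section~8]{HHP2010projective} (exactly the argument already carried out on the Hitchin side in Proposition~\ref{prop:abstractseiberghitchin}), the derivative $\nabla^{GM}_\mu\lambda$ is cohomologous to the contraction of $\kappa(\mu)$ with $\Omega_b$, and a fibrewise residue along the $A_{n-1}$-fibres of $\pi_b$ reduces this to the class $\iota_\mu d\theta_b$ on the cameral cover; Cartan's formula simultaneously shows the derivative lies in $F^1V^\vee_\cO$. By Lemma~\ref{lem:hitchin base identification} the map $\mu \mapsto \iota_\mu d\theta_b$ is an isomorphism onto $H^0(\cs_b,\ft\otimes \Omega^1_{\cs_b}(\log \widetilde{D}_b))^W$, which under the residue identification is precisely $F^1 V^\vee_\cO$, so $\phi_\lambda$ is an isomorphism and Proposition~\ref{SWdifferentials and S-PIS} concludes the proof. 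The main technical obstacle is the bookkeeping of the mixed Hodge structures and Tate twists on the non-compact $X_b$, together with checking that the fibrewise residue intertwines the contraction with $\Omega_b$ and the cameral contraction $\iota_\mu d\theta_b$ transported through Lemma~\ref{gluingmodel}.
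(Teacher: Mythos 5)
Your proposal is correct in substance and follows the same overall strategy as the paper: reduce to Proposition~\ref{SWdifferentials and S-PIS} by exhibiting the fibration as the relative Jacobian of a variation of semi-polarized $\Z$-mixed Hodge structures (via Proposition~\ref{Hodge type argument}) and taking the abstract Seiberg--Witten differential to be the class of the fibrewise holomorphic volume form. The differences are organizational but worth recording. First, you construct the volume form by Poincar\'e residue/adjunction along the hypersurface $X_b\subset \wv$, whereas the paper glues the Kostant--Kirillov form on the Slodowy slice $S$ over the theta characteristic $L$ and pulls it back to $(\cX^\circ)^{ur}$; these give the same form up to scale, and your construction is arguably more elementary. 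Second, and more significantly, the paper does \emph{not} verify the condition~(\ref{def:SW condition}) inside this proposition: it defers that verification to the main theorem (Theorem~\ref{thm:main meromorphic DDP}), where the comparison isomorphism of variations of mixed Hodge structures identifies $\lambda^\circ_{CY}$ with the Hitchin-side differential $\lambda_\Delta$ (already known to satisfy the condition by Proposition~\ref{prop:abstractseiberghitchin}), the matching of the two differentials being quoted from \cite[Theorem 5.2.1]{beck2017hitchin} after lifting to the simultaneous resolution. You instead propose to inline exactly that computation: transport $\nabla^{GM}_\mu\lambda$ through the gluing model, integrate over the vanishing $2$-cycles of the $A_{n-1}$-fibres to land on the cameral curve, and conclude by Lemma~\ref{lem:hitchin base identification}. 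This is the correct content and makes the proposition self-contained, avoiding the paper's forward reference; the cost is that you duplicate work that the paper needs anyway for Theorem~\ref{thm:main meromorphic DDP}, and your ``fibrewise residue'' step is precisely where the technical weight sits --- note that $R^2\pi^\circ_{b!}\Z$ is only a constructible sheaf because of the singular fibres, so the reduction to the cameral curve genuinely requires the lift to the simultaneous resolution and the identification $R^2\pi^\circ_{b!}\Z\cong(\widetilde{p}^\circ_{b*}\Lambda_{SL(n)})^W$ of Theorem~\ref{localDDP}, Step 2. Two small corrections: $Y_b$ is \emph{not} compact (it is the quasi-projective DDP-type threefold); what makes $\Gr^W_{-1}V$ polarizable is the purity of $H^3_c(Y_b,\C)$ of type $\{(1,2),(2,1)\}$ as in \cite[Lemma 3.1]{diaconescu2006intermediate}, equivalently its identification with the Prym lattice of the compact spectral curve. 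Also, the fibre of $\mathsf{V}^\vee$ is $H^3(X_b,\Z)_{\textrm{tf}}(1)$, of type $\{(0,1),(1,0),(1,1)\}$ only after this Tate twist ($H^3(X_b,\Z)$ itself has type $\{(1,2),(2,1),(2,2)\}$ by Proposition~\ref{Hodge type argument}); your filtration bookkeeping $F^1V^\vee_\cO\leftrightarrow F^2H^3(X_b,\C)$ is then right.
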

\begin{proof}
By the Lemma \ref{CYfamilies}, it is enough to show that there exists an abstract Seiberg-Witten differential associated to the subfamily $(\pi^\circ)^{ur}:({\cX}^\circ)^{ur} \to B^{ur}$. In other words, we need to construct a holomorphic 3-form $\lambda_{CY}^\circ$ on $(\cX^\circ)^{ur}$ which yields the nowhere vanishing holomorphic volume form $\lambda^\circ_{CY,b} \in H^0(X_b^\circ, K_{X_b^\circ})$ for each $b \in B^{ur}$ and satisfies the condition (\ref{def:SW condition}).

First, the holomorphic 3-form $\lambda_{CY}^\circ$ is obtained from the holomorphic 3-form ${\lambda}$ on $\bm{S}$. Note that the Kostant-Kirillov form on $\fg$ induces the nowhere vanishing section in $ \nu \in H^0(S, K_{\sigma})$. One can glue the sections over $L$ by tensoring with local frames in the pullback of $K_\Sigma$, which turns out to be the holomorphic 3-form $\lambda$ on $\bm{S}$ \cite{diaconescu2006intermediate}\cite[Section 4]{beck2017hitchin}. By restricting $\lambda$ to $\Sigma^\circ$, it becomes a global holomorphic 3-form whose pullback to $(\cX^{\circ})^{ur}$ 
is the 3-form $\lambda_{CY}^\circ \in H^3((\cX^\circ)^{ur}, \C)$. By construction, for each $b \in B^{ur}$, the restriction $\lambda_{CY,b}^\circ$ becomes the nowhere vanshing holomorphic 3-form on $X_b^\circ$.
Next, the proof that ${\lambda^\circ_{CY}}$ becomes the abstract Seiberg-Witten differential relies on our main result (Theorem \ref{thm:main meromorphic DDP}). In particular, we identify the volume form ${\lambda^\circ_{CY}}$ with the abstract Seiberg-Witten differential for the Hitchin system so that the form  ${\lambda_{CY}^\circ}$ automatically satisfies the condition (\ref{def:SW condition}). Therefore, it follows from Proposition \ref{SWdifferentials and S-PIS} that $\mathcal{J}(\cX ^{ur}/B^{ur}) \to B^{ur}$ is a semi-polarized integrable system. 
\end{proof}

\section{Meromorphic DDP correspondence}
\subsection{Isomorphism of semi-polarized integrable systems }
The goal of this section is to prove an isomorphism between the two semi-polarized integrable systems that have been studied so far: the moduli space of unordered diagonally framed Higgs bundles $\cM^{\Delta}(n,D)^{ur}\to B^{ur}$ and the relative intermediate Jacobian fibration $\cJ(\cX^{ur}/B^{ur})\to B^{ur}$ of the family of Calabi-Yau threefolds $\pi^{ur}:\cX^{ur}\to B^{ur}$. The main result is stated as follows.

\begin{theorem}\label{thm:main meromorphic DDP}
There is an isomorphism of semi-polarized integrable systems:
\begin{equation}
\begin{tikzcd}
    \mathcal{J}(\mathcal{X}^{ur}/B^{ur}) \arrow[rr, "\cong"]  \arrow[rd, "\pi^{ur}"']  &&
    \cM^\Delta(n,D) ^{ur} \arrow[ld,"h_{\Delta}^{ur}"]
    \\
    & B^{ur}
\end{tikzcd}
\end{equation}
\end{theorem}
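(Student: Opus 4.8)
The plan is to realize both fibrations as relative Jacobians of variations of $\Z$-mixed Hodge structures equipped with abstract Seiberg-Witten differentials, and then to produce an isomorphism of these variations which intertwines the differentials; by Proposition \ref{SWdifferentials and S-PIS} such an isomorphism automatically upgrades to an isomorphism of semi-polarized integrable systems over $B^{ur}$. On the Hitchin side the relevant variation is $\mathsf{V}_H$ with fibres $H_{\Delta,SL(n)}=H_1(\Prym(\ts_b^\circ,\sm^\circ),\Z)$ of type $\{(-1,-1),(-1,0),(0,-1)\}$ (Corollary \ref{Hodge structure of Prym}), carrying the differential $\lambda_\Delta$ of Proposition \ref{prop:abstractseiberghitchin}. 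On the Calabi-Yau side it is $\mathsf{V}_{CY}$ with fibres $H_3(X_b,\Z)_{\mathrm{tf}}(-1)$, which is of the same type by Proposition \ref{Hodge type argument}, carrying the class $\lambda_{CY}^\circ$ of the holomorphic volume form.

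The geometric core is the construction of an isomorphism $\mathsf{V}_{CY}\cong\mathsf{V}_H$ of variations of $\Z$-mixed Hodge structures over $B^{ur}$. First I would pass to compactly supported cohomology, where $H^3_c(X_b,\Z)\cong H^3_c(X_b^\circ,\Z)$ as in the proof of Proposition \ref{Hodge type argument}, and use the Slodowy gluing model of Lemma \ref{gluingmodel} to replace $X_b^\circ$ by $Y_b^\circ$. The fibration $\pi_b^\circ\colon X_b^\circ\to\sm^\circ$ has fibres the Milnor fibres of the $A_{n-1}$-singularity, whose only reduced cohomology sits in degree two and is the root lattice of $\mathfrak{sl}_n$ with Weyl monodromy; consequently the Leray spectral sequence for $\pi_b^\circ$ contributes to $H^3_c$ only through $H^1_c(\sm^\circ,R^2\pi^\circ_{b*}\Z)$. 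The key local system identification is $R^2\pi^\circ_{b*}\Z\cong(\widetilde{p}_{b*}\Lambda_{SL(n)})^W\cong\cK_b|_{\sm^\circ}$, that is, exactly the cameral/spectral sheaf appearing in Lemma \ref{lem:spectraltocameral}. Combining this with Lemma \ref{rem:Prym sheaf} and Proposition \ref{prop:cameraldescription} yields, after a Tate twist, an isomorphism $H_3(X_b,\Z)_{\mathrm{tf}}(-1)\cong H^1_c(\sm^\circ,\cK_b)_{\mathrm{tf}}\cong H_{\Delta,SL(n)}$; since every map in sight is a morphism of mixed Hodge structures induced from the cameral cover, this is an isomorphism of $\Z$-mixed Hodge structures, and it varies holomorphically in $b\in B^{ur}$, giving $\mathsf{V}_{CY}\cong\mathsf{V}_H$.

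It then remains to check that this isomorphism carries $\lambda_{CY}^\circ$ to $\lambda_\Delta$. Under the homological identification above, a generating set of $3$-cycles on $X_b^\circ$ is obtained by sweeping the vanishing $2$-cycles of the $A_{n-1}$-fibres along $1$-cycles of the cameral (equivalently spectral) cover. On each surface fibre the restriction of the holomorphic volume form is the Kostant--Kirillov form coming from the Slodowy slice, so integrating $\lambda_{CY}^\circ$ first over the vanishing cycle and then over the transverse $1$-cycle reduces the period to $\int_{\gamma}\theta$, where $\theta$ is the tautological logarithmic $1$-form defining $\lambda_\Delta$. Thus the two abstract Seiberg-Witten differentials agree under the comparison map, which by Proposition \ref{prop:abstractseiberghitchin} is already known to satisfy the isomorphism condition \eqref{def:SW condition}; this simultaneously discharges the forward reference made in the construction of the Calabi-Yau integrable system.

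The two ingredients that require real care are the precise matching of the Hodge and weight filtrations, as opposed to merely the underlying lattices, in the second step, and the period computation intertwining the differentials in the third. The filtration matching is delicate because of the torsion in $H_3(X_b,\Z)$ (Theorem \ref{localDDP}) and the interplay between compactly supported and ordinary cohomology; the differential computation is the genuine crux, since it is exactly what pins down the symplectic, and hence Poisson, structures on both sides and resolves the apparent circularity between Proposition \ref{SWdifferentials and S-PIS} and the volume form $\lambda_{CY}^\circ$.
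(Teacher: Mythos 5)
Your proposal is correct and follows essentially the same route as the paper's proof: your fiberwise comparison (reduction to $H^3_c$ via Poincar\'e duality, the identification $H^3_c(X_b,\Z)\cong H^3_c(X_b^\circ,\Z)$, the Slodowy gluing model of Lemma \ref{gluingmodel}, the Leray spectral sequence, and the sheaf identifications with $(\widetilde{p}^\circ_{b*}\Lambda_{SL(n)})^W$ and $\cK_b|_{\sm^\circ}$ from Lemma \ref{lem:spectraltocameral}) is exactly Steps 1--4 of Theorem \ref{localDDP}, globalized over $B^{ur}$, and your discharge of the forward reference for $\lambda^\circ_{CY}$ matches the paper's logic.

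One slip is worth correcting because in this specific setting it is not cosmetic: you write $R^2\pi^\circ_{b*}\Z$ where the argument requires the compactly supported pushforward $R^2\pi^\circ_{b!}\Z$. The Leray spectral sequence computing $H^3_c(X_b^\circ,\Z)$ has $E_2$-terms $H^p_c(\sm^\circ, R^q\pi^\circ_{b!}\Z)$, and the stalk of $R^2\pi^\circ_{b!}\Z$ at a generic point is $H^2_c$ of the $A_{n-1}$-fibre, i.e.\ the root lattice $\Lambda_{SL(n)}$, whereas the stalk of $R^2\pi^\circ_{b*}\Z$ is the weight lattice. The latter is precisely the $PGL(n,\C)$ side of the correspondence (cf.\ Remark \ref{rem:CYadjointtype}), so with the $*$-pushforward your key identification with $(\widetilde{p}_{b*}\Lambda_{SL(n)})^W\cong\cK_b|_{\sm^\circ}$ would fail. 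Since you explicitly pass to compactly supported cohomology first, this reads as a notational slip rather than a conceptual error, but it must be fixed.

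Finally, your treatment of the Seiberg--Witten differentials differs from the paper's only in presentation: the paper lifts $\lambda^\circ_{CY}$ along the map $\Psi$ induced by the glued simultaneous resolution $\widetilde{S}\to S$, identifies $R^3(\widetilde{\pi^\circ})^{ur}_*\C\otimes\cO_{B^{ur}}$ with $R^1(f^\circ)^{ur}_*\ft\otimes\cO_{B^{ur}}$ for the family of punctured cameral curves, and then invokes \cite[Theorem 5.2.1]{beck2017hitchin} to see that both differentials arise from the tautological section on $\bm{\widetilde{U}}$; your sketch of sweeping vanishing $2$-cycles along $1$-cycles and integrating fiberwise is the period computation underlying that citation. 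It is the right idea, but as written it is a heuristic and would need to be carried out (or replaced by the resolution argument) to constitute a proof.
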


Recall that we have shown in Proposition \ref{spectralcorrespondence} and Corollary \ref{Hodge structure of Prym} that $(h_{\Delta}^{ur})^{-1}(b) \cong \Prym(\overline{\sm}_b^\circ, \sm^\circ)\cong J(H_{\Delta, SL(n), b})$ where $H_{\Delta,SL(n), b} := H_1(\Prym(\overline{\sm}^\circ_b,\sm^\circ),\Z)=H_1(\sm^\circ, \cK_b|_{\sm^\circ})_{\textrm{tf}}$ and $\cK_b:=\ker(\Tr:\overline{p}_{b*}\Z\to \Z)$. By definition, the fiber $(\pi^{ur})^{-1}(b)= J_2(X_b) = J(H_3(X_b,\Z)(-1)).$ The specialization of Theorem \ref{thm:main meromorphic DDP} to $b\in B^{ur}$ is equivalent to an isomorphism between the semi-abelian varieties $J_2(X_b)$ and $\Prym(\overline{\sm}^\circ_b, \sm^\circ)$, or equivalently, between the $\Z$-mixed Hodge structures $H_3(X, \Z)_\textrm{tf}(-1)$ and $H_{\Delta,SL(n),b}$ of type $\{(-1,-1), (-1,0), (0,-1)\}.$ We begin by proving the following result. 

\begin{theorem}\label{localDDP}
For $b\in B^{ur}$, there is an isomorphism of $\Z$-mixed Hodge structures: 
\begin{equation}
    (H_3(X_b,\Z)_{\textrm{tf}}(-1), W_\bullet^{CY}, F^\bullet_{CY}) \cong (H_{\Delta,SL(n),b}, W_\bullet^{\Delta,b}, F^\bullet_{\Delta,b}).
\end{equation}
\end{theorem}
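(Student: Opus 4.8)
The plan is to realize both $\mathbb{Z}$-mixed Hodge structures as the first compactly supported cohomology of one and the same constructible sheaf on the punctured curve $\Sigma^\circ$, and to check that every comparison map in the chain is a morphism of mixed Hodge structures. On the Calabi--Yau side I would first reduce from $H_3$ to compactly supported cohomology of the open threefold: Poincar\'e--Lefschetz duality on the smooth quasi-projective $X_b$ gives $H_3(X_b,\mathbb{Z}) \cong H^3_c(X_b,\mathbb{Z})(3)$ as $\mathbb{Z}$-mixed Hodge structures, and the proof of Proposition \ref{Hodge type argument} already supplies the isomorphism $H^3_c(X_b,\mathbb{Z}) \cong H^3_c(X_b^\circ,\mathbb{Z})$. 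Thus it suffices to compute $H^3_c(X_b^\circ,\mathbb{Z})$ through the fibration $\pi_b^\circ:X_b^\circ \to \Sigma^\circ$.

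The heart of the argument is the identification of the sheaf $R^2(\pi_b^\circ)_!\mathbb{Z}$. Each smooth fibre of $\pi_b^\circ$ is the affine surface $xy = \prod_i(s-s_i)$, i.e. the Milnor fibre of the $A_{n-1}$-singularity, whose only nontrivial compactly supported cohomology in degrees $\le 3$ is $H^2_c \cong H_2 \cong \mathbb{Z}^{n-1}$, spanned by the vanishing $2$-cycles joining consecutive sheets $s_i,s_{i+1}$ of the spectral cover. These vanishing cycles are exactly the simple roots of $A_{n-1}$, and the differences of sheets they record are precisely the local sections of $\cK_b = \ker(\Tr:\overline{p}_{b*}\mathbb{Z}\to\mathbb{Z})$. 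To make this precise over all of $\Sigma^\circ$, including the simple branch points of $\overline{p}_b$, I would pass to the Slodowy-slice model via the isomorphism of families $\cY^\circ \cong \cX^\circ$ of Lemma \ref{gluingmodel}; there the fibrewise second cohomology is the root lattice with Weyl-group monodromy by the classical theory of simultaneous resolution, and the resulting sheaf isomorphism $R^2(\pi_b^\circ)_!\mathbb{Z} \cong \cK_b|_{\Sigma^\circ}(-1)$ is the meromorphic counterpart of the Diaconescu--Donagi--Pantev / Beck computation, matching the sheaf identification $(\widetilde{p}_{b*}\Lambda_{SL(n)})^W \cong \cK_b$ of Lemma \ref{lem:spectraltocameral}. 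The Tate twist $(-1)$ records that $H^2_c$ of the surface fibre is of Hodge--Tate type $(1,1)$.

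With this sheaf in hand I would run the Leray spectral sequence $E_2^{p,q} = H^p_c(\Sigma^\circ, R^q(\pi_b^\circ)_!\mathbb{Z}) \Rightarrow H^{p+q}_c(X_b^\circ,\mathbb{Z})$. On the antidiagonal $p+q=3$ the only surviving term is $E_2^{1,2}$: the terms with $q\le 1$ vanish because the fibre has no compactly supported cohomology below degree $2$, while $H^p_c(\Sigma^\circ,-)=0$ for $p\ge 3$. Hence $H^3_c(X_b^\circ,\mathbb{Z}) \cong H^1_c(\Sigma^\circ, R^2(\pi_b^\circ)_!\mathbb{Z}) \cong H^1_c(\Sigma^\circ,\cK_b|_{\Sigma^\circ})(-1)$. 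Passing to torsion-free parts and invoking Lemma \ref{rem:Prym sheaf}, which gives $H^1_c(\Sigma^\circ,\cK_b|_{\Sigma^\circ})_{\textrm{tf}}(1) \cong H_{\Delta,SL(n),b}$, and bookkeeping the Tate twists ($H_3(X_b)_{\textrm{tf}}(-1) \cong H^3_c(X_b^\circ)_{\textrm{tf}}(2) \cong H^1_c(\Sigma^\circ,\cK_b)_{\textrm{tf}}(1)$) yields the desired isomorphism. Because Poincar\'e duality, the edge map of the Leray sequence of mixed Hodge structures, and the sheaf identification are all morphisms of $\mathbb{Z}$-mixed Hodge structures, the weight and Hodge filtrations $(W_\bullet^{CY},F^\bullet_{CY})$ and $(W_\bullet^{\Delta,b},F^\bullet_{\Delta,b})$ correspond automatically, so no separate filtration computation is needed.

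The hard part will be the sheaf-level isomorphism $R^2(\pi_b^\circ)_!\mathbb{Z} \cong \cK_b|_{\Sigma^\circ}(-1)$ as an \emph{integral} isomorphism over the whole base rather than merely generically or rationally. Controlling it across the simple branch points of $\overline{p}_b$ --- where one vanishing cycle collapses and the monodromy acts by the corresponding reflection --- is exactly what forces the hypothesis $b\in B^{ur}$ (smooth cameral curve, simple ramification, unramified over $D$); with higher ramification the local models degenerate and the two sheaves need no longer agree integrally. The same subtlety, the distinction between the root lattice $\cK_b$ and the weight lattice governing the adjoint group, is what produces the torsion in $H_3(X_b,\mathbb{Z})$ flagged before the statement, and is why the theorem is phrased for the torsion-free quotient; I would handle it by the local Milnor-fibration analysis at each branch point and then glue.
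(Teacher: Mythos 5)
Your proposal is correct and takes essentially the same route as the paper's proof: the same reduction chain $H_3(X_b,\Z)(-1)\cong H^3_c(X_b,\Z)(2)\cong H^3_c(X_b^\circ,\Z)(2)\cong H^1_c(\sm^\circ, R^2\pi^\circ_{b!}\Z)(2)$ via duality, the pair sequence, and the degenerate Leray spectral sequence, followed by the key sheaf identification of $R^2\pi^\circ_{b!}\Z$ with $\cK_b|_{\sm^\circ}$ obtained from the Slodowy-slice model of Lemma \ref{gluingmodel} together with the cameral comparison, and concluding with Lemma \ref{rem:Prym sheaf} and Tate-twist bookkeeping. The paper merely makes explicit the intermediate step you allude to, factoring the sheaf isomorphism through $(\widetilde{p}^\circ_{b*}\Lambda_{SL(n)})^W$ (its Step 2, via Beck's simultaneous-resolution computation) and then invoking Lemma \ref{lem:spectraltocameral} (its Step 3), exactly as you indicate.
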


\begin{proof}
We first fix some notations. Recall that $\sm^\circ: = \sm \setminus D$. Denote by $\sm^1:=\sm^\circ \setminus Br(\widetilde{p}_b^\circ), \widetilde{\sm}^1_b:=\widetilde{\sm}^\circ_b \setminus Ram(\widetilde{p}_b^\circ)$ the complement of the ramification and branch divisors in $\sm^\circ_b, \widetilde{\sm}^\circ_b$ respectively. Since the fibers of the cameral cover and spectral cover over each point in $\sm$ are the ordered and unordered solutions of the associated characteristic polynomial respectively \cite[Section 9]{DonagiGaitsgory}, the branch divisor of the spectral cover $\overline{p}_b^\circ :\overline{\sm}_b^\circ \to \sm^\circ$ is contained in the branch divisor of the cameral cover $\widetilde{p}_b^\circ: \widetilde{\sm}_b^\circ\to \sm^\circ$. We write $\overline{\sm}^1_b: = \overline{\sm}^\circ_b \setminus (\overline{p}^\circ_b)^{-1}Br(\widetilde{p}^\circ_b). $ The restricted maps of the spectral cover $\overline{p}^1_b:\ts^1_b\to \sm^1$ and the cameral cover $\widetilde{p}_b^1: \widetilde{\sm}_b^1\to \sm^1$ are then unramified. Similarly, we write $X_b^1\subset X_b^\circ$ the complement of $(\pi^\circ_b)^{-1}(D)$ in $X_b^\circ$ and the restricted map as $\pi^1_b: X_b^1\to \sm^1.$  

\noindent \textbf{Step 1.}
There are isomorphisms of $\Z$-mixed Hodge structures of type \\
$\{(-1,-1), (-1,0), (0,-1)\}$ 
\begin{align}
    H_3(X_b, \Z)(-1) \cong H^3_c(X_b ,\Z)(2) \cong H^3_c(X_b^\circ ,\Z)(2) \cong H^1_c(\sm^\circ, R^2\pi^\circ_{b!}\Z)(2).
\end{align}
The first two isomorphisms follow from Proposition \ref{Hodge type argument}. One can see that the last isomorphism is given by the Leray spectral sequence associated to $\pi_b^\circ:X_b^\circ \to \Sigma_b^\circ$ whose $E_2$-page is given by
        \begin{equation}
            E_2^{pq}=H_c^p(\Sigma_b^\circ, R^q\pi^\circ_{b!}\Z)
        \end{equation}
Note that a generic fiber of $\pi^\circ_b$ is deformation equivalent to a smooth fiber of the universal unfolding of $A_{n-1}$-singularity $\C^2/\Z_n$. Moreover, as we restrict to $b \in B^{ur}$, the singular fibers of $\pi^\circ_b$ have a single node and are isomorphic to a smooth fiber of $\pi^\circ_b$  with a $(-2)$-curve contracted. It implies that $R^q\pi^\circ_{b!}\Z$ is trivial for $q=1$ so that only $E_2^{12}$-term does not vanish. Therefore, the spectral sequence degenerates at $E_2$-page and we have $H^3_c(X_b^\circ ,\Z) \cong H^1_c(\sm^\circ, R^2\pi^\circ_{b!}\Z)$.
        
\noindent \textbf{Step 2.}\begin{lemma}
Over $\sm^\circ$, we have an isomorphism of sheaves,
\begin{equation}
    R^2{\pi^\circ_{b}}_! \Z \cong  (\widetilde{p}_{b*}^\circ\Lambda_{SL(n)})^W.
\end{equation}
\end{lemma}
\begin{proof} 
In the classical work of \cite{slodowy1980four}, Slodowy provided a detailed study of the topology of the maps in the following diagram via its simultaneous resolution:
\begin{equation}
    \begin{tikzcd}
     \widetilde{S}\arrow[r]\arrow[d,"\widetilde{\sigma}"]&S\arrow[d,"\sigma"]\\
     \ft\arrow[r,"\phi"]& \ft/W
    \end{tikzcd}
\end{equation}
It can be shown that there is an isomorphism of constructible sheaves 
\begin{equation}\label{isom of constructible sheaves 0}
    R^2 \sigma^1_!\Z  \cong  (\phi^1_*\Lambda _{SL(n)})^W
\end{equation}
over an open subset $\ft^1/W\subset \ft/W$ defined as the image of another open subset $\ft^1\subset \ft$ under $\phi$. Here we denote $\phi^1:= \phi|_{\ft^1}$ and $\sigma^1 : \sigma^{-1}(\ft^1/W) \to \ft^1/W$. For details, see \cite[Lemma 5.1.3]{beck2017hitchin}.

Next, we glue the maps $\sigma$ and $\phi$ along $K_\sm(D)|_{\Sigma^\circ}$ as in  (\ref{eq:Cameralgluing}) and (\ref{eq:Slodowygluing}) 
\begin{equation}\label{gluing of simulateneous resolution}
    \begin{tikzcd}
    &\bm{S}|_{\sm^\circ}  \arrow[d,"\bm{\sigma}|_{\sm^\circ}"] \\
    \bm{\widetilde{U}}|_{\sm^\circ}\arrow[r,"\bm{\phi}|_{\sm^\circ}"]&\bm{U}|_{\sm^\circ}
    \end{tikzcd}
\end{equation}
where we define $\bm{\widetilde{U}}|_{\sm^\circ}:= \Tot(K_\sm(D)|_{\sm^\circ}\times_{\C^*}\ft)$ and we recall that $\bm{S}|_{\sm^\circ} = \Tot(L|_{\sm^\circ} \times _{\C^*} S)$ and $\bm{U}|_{\sm^\circ}= \Tot(K_\sm(D)|_{\sm^\circ} \times_ {\C^*} \ft/W) \cong \Tot(L|_{\sm^\circ}\times_{\C^*} \ft/W)$. 
Let us define $\bm{U^1}:=  \Tot(K_\sm(D)\times_{\C^*}\ft^1/W)\subset \bm{U}$. 
Since the varieties here are glued using the same cocyle of $L|_{\sm^\circ}$ (again, in taking the associated bundles here, $L|_{\sm^\circ}$ as a $\C^*$-bundle acts with twice the weights of the action by $K_{\sm}(D)|_{\sm^\circ}$), the isomorphism of constructible sheaves (\ref{isom of constructible sheaves 0}) over $\ft^1/W$ also glues together to another isomorphism of constructible sheaves over $\bm{U^1}|_{\sm^\circ}$:
\begin{equation}\label{isom of constructible sheaves 1}
    R^2 (\bm{\sigma})_!\Z \cong (\bm{\phi}_{*}\Lambda_{SL(n)})^W.
\end{equation}
As argued in Lemma \ref{gluingmodel}, $\bm{\sigma}|_{\sm^\circ}:\bm{S}|_{\sm^\circ} \to \bm{U}|_{\sm^\circ}$ is equivalent to $\bm{\sigma'}|_{\sm^\circ}:\bm{H}|_{\sm^\circ} \to \bm{U}|_{\sm^\circ}$, so we obtain 
\begin{equation}\label{isom of constructible sheaves 2}
    R^2 (\bm{\sigma'})_!\Z \cong (\bm{\phi}_{*}\Lambda_{SL(n)})^W
\end{equation}
over $\bm{U^1}|_{\sm^\circ}$. In both (\ref{isom of constructible sheaves 1}) and (\ref{isom of constructible sheaves 2}), we drop the notation of the restrictions of $\bm{\sigma}$, $\bm{\sigma'}$ and $\bm{\phi}$ to $\bm{U^1}|_{\sm^\circ}$ for convenience. 

Recall from Lemma \ref{gluingmodel} that $\pi_b^\circ: X_b^\circ \to \sm^\circ$ can be obtained by pulling back from $\bm{\sigma'}|_{\sm^\circ}:\bm{H}|_{\sm^\circ}\to \bm{U}|_{\sm^\circ}$ along the composition of the inclusion and the evaluation map $\sm^\circ\times \{b\} \hookrightarrow \sm^\circ\times B\to \bm{U}|_{\sm^\circ}$. For $b\in B^{ur}$, the section $b:\sm \to \bm{U}$ factorizes through $\bm{U^1}$ and then restricts to $b|_{\sm^\circ}: \sm^\circ \to \bm{U^1}|_{\sm^\circ}$, so the isomorphism (\ref{isom of constructible sheaves 2}) specializes to $R^2\pi^\circ_{b!}\Z\cong (\widetilde{p}_{b*}^\circ \Lambda_{SL(n)})^W$ by pulling back along $b|_{\sm^\circ}.$ 

\end{proof}

\noindent\textbf{Step 3.}\begin{lemma}\label{lem:spectraltocameral} Over $\sm^\circ$, we have an isomorphism of sheaves, 
\begin{align}
(\widetilde{p}^\circ_{b*}\Lambda_{SL(n)})^W \cong  \cK_b|_{\sm^\circ}.    
\end{align}
\end{lemma}

\begin{proof}

To simplify the notation, we will write $\cK^\circ_b:= \cK_b|_{\sm^\circ}$ in this proof.
Recall that there is an isomorphism (see \cite[(6.5)]{donagi1993decomposition}) between the two sheaves away from the branch locus:
\begin{equation}\label{spectralvscameral}
    \overline{p}_{b*}^1\Z \cong (\widetilde{p}_{b*}^1 R)^W 
\end{equation}
where $R:=\Z[W/W_0]$ denote the free abelian group generated by the set of right (or left) cosets $W/W_0$ and $W_0$ is a subgroup of $W=S_n$ isomorphic to $S_{n-1}$ that fixes a chosen element of $\{1,..,n\}$ (on which $W=S_n$ acts).\footnote{The choice of the element in $\{1,...,n\}$ or equivalently the choice of subgroup $W_0\cong S_{n-1}$ in $W$ is not important here as $\widetilde{\sm} /W_0 \cong \ts$ for any choice of $W_0$ (\cite[Section 9]{DonagiGaitsgory}). } 
Then we see that 
\begin{equation*}
    \cK_b|_{\sm^1}  = \ker( \overline{p}^1_{b*}\Z \to \Z) \cong  \ker ((\widetilde{p}_{b*}^1R)^W \to \Z) \cong (\widetilde{p}^1_{b*}\Lambda_{SL(n)} )^W ,
\end{equation*}
the last isomorphism holds because $\ker(R\to \Z) = \Lambda_{SL(n)}$. 

Denote by $j: \sm^1\to \sm^\circ $ the inclusion map. We first write $\cK_b^\circ$ as $j_*j^*\cK_b^\circ $. Indeed, as $\overline{p}^\circ_{b*}\Z = j_*\overline{p}^1_{b*}\Z $ and $\Z\cong j_*\Z$, applying the functor $j_*$ to the short exact sequence $0\to j^*\cK_b^\circ\to \overline{p}^1_{b*}\Z\xrightarrow{\Tr|_{\sm^1}} \Z\to 0$, we get
\begin{equation*}
    0\to j_*j^*\cK_b^\circ \to j_*\overline{p}_{b*}^1\Z = \overline{p}_{b*}^1\Z \xrightarrow{\Tr} j_*\Z= \Z \to R^1j_*j^*\cK_b^\circ \to... 
\end{equation*}
In particular, it follows that $j_*j^*\cK_b^\circ \cong \ker(\Tr) = \cK_b^\circ.$

Hence, we get
\begin{equation*}
    (\widetilde{p}_{b*}^\circ \Lambda_{SL(n)})^W \cong j_*(\widetilde{p}_{b*}^1\Lambda_{SL(n)})^W  \cong j_* j^*\cK_b^\circ \cong \cK_b^\circ
\end{equation*}
which means that the isomorphism (\ref{spectralvscameral}) above extends from $\sm^1$ to $\sm^\circ.$
\end{proof}

\noindent\textbf{Step 4.} Finally, we have the isomorphisms of sheaves 
\begin{equation*}
    R^2\pi_{b!}^\circ \Z\cong (\widetilde{p}_{b*}^\circ \Lambda_{SL(n)})^W\cong \cK_b|_{\sm^\circ}
\end{equation*} 
which induce the isomorphisms of $\Z$-mixed Hodge structure of type $\{(0,0),(0,1),(1,0)\}$ on 
\begin{equation*}
    H^1_c(\sm^{\circ}, R^2\pi_{b!}^\circ \Z)(1) \cong H^1_c( \sm^\circ, (\widetilde{p}_{b*}^\circ \Lambda_{SL(n)})^W) \cong H^1_c(\sm^\circ, \cK_b|_{\sm^\circ}).
\end{equation*}

Hence, by taking the torsion free part and Tate twists, we achieve the isomorphism of $\Z$-mixed Hodge structures of type $\{(-1,-1,), (-1,0), (0,-1)\}$
\begin{center}
    $H_3(X_b,\Z)_\textrm{tf} (-1) \cong H^1_c(\sm^\circ, \cK_b|_{\sm^\circ})_\textrm{tf}(2) \cong H_{\Delta,SL(n),b}.$ 
\end{center}
\end{proof}
 By the equivalence between semi-abelian varieties and torsion free $\Z$-mixed Hodge structures of type $\{(-1,-1), (-1,0)$, $(0,-1)\}$, we immediately get the following result: 
\begin{corollary}
We have an isomorphism of semi-abelian varieties
\begin{equation}
    J_2(X_b) \cong  h^{-1}_{\Delta}(b) \cong \Prym(\ts^\circ_b, \sm^\circ).
\end{equation}
\end{corollary}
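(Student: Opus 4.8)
The plan is to deduce the corollary directly from the isomorphism of $\Z$-mixed Hodge structures proved in Theorem \ref{localDDP}, by applying the Jacobian construction and then identifying each side with the semi-abelian varieties appearing in the statement.

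First I would invoke the equivalence recalled in Appendix \ref{appendix}: the assignment $\mathsf{H}\mapsto J(\mathsf{H})$, sending a torsion-free $\Z$-mixed Hodge structure of type $\{(-1,-1),(-1,0),(0,-1)\}$ to its Jacobian, is functorial and gives an equivalence onto the category of semi-abelian varieties. In particular it carries isomorphisms to isomorphisms. Applying it to the isomorphism $H_3(X_b,\Z)_{\textrm{tf}}(-1)\cong H_{\Delta,SL(n),b}$ of Theorem \ref{localDDP} produces an isomorphism of semi-abelian varieties $J(H_3(X_b,\Z)_{\textrm{tf}}(-1))\cong J(H_{\Delta,SL(n),b})$.

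It then remains to identify the two sides. On the Calabi--Yau side, $J_2(X_b)$ is by definition $J(H_3(X_b,\Z)(-1))$; since the Jacobian of a $\Z$-mixed Hodge structure of this type depends only on its torsion-free quotient (as noted in the remark following the definition of $J_2(X_b)$), we have $J_2(X_b)=J(H_3(X_b,\Z)_{\textrm{tf}}(-1))$. On the Hitchin side, Corollary \ref{Hodge structure of Prym} identifies $J(H_{\Delta,SL(n),b})$ with $\Prym(\ts^\circ_b,\sm^\circ)$, and the spectral correspondence of Proposition \ref{spectralcorrespondence} identifies $\Prym(\ts^\circ_b,\sm^\circ)$ with the fiber $h^{-1}_\Delta(b)$. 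Composing these identifications yields the asserted chain of isomorphisms. Since the whole of the mixed-Hodge-theoretic work is already accomplished in Theorem \ref{localDDP}, there is no genuine obstacle at this stage; the only point meriting attention is that the passage from lattices to semi-abelian varieties respects all the structures involved, which is precisely the content of the functoriality of $J(-)$ together with its insensitivity to torsion.
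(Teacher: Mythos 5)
Your proposal is correct and follows essentially the same route as the paper: the paper deduces the corollary ``immediately'' from Theorem \ref{localDDP} via the equivalence (Appendix \ref{appendix}) between semi-abelian varieties and torsion-free $\Z$-mixed Hodge structures of type $\{(-1,-1),(-1,0),(0,-1)\}$, combined with Corollary \ref{Hodge structure of Prym} and Proposition \ref{spectralcorrespondence}. Your write-up merely makes explicit the points the paper leaves implicit (functoriality of $J(-)$ and passage to the torsion-free quotient of $H_3(X_b,\Z)$), which is fine.
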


Now we return to the main theorem.  
\begin{proof}[Proof of Theorem \ref{thm:main meromorphic DDP}]
Clearly, the argument in Theorem \ref{localDDP} works globally for the family of Calabi-Yau threefolds $\pi^{ur}=pr_2\circ\bm{\pi}^{ur}:\cX^{ur} \to \sm\times B^{ur}\to   B^{ur}$ and the family of punctured spectral curves $pr_2\circ\bm{\overline{p}}^{ur}: {\bm{\overline{\Sigma}^\circ}}\to \sm^\circ\times B^{ur}\to B^{ur}$, so it yields an isomorphism of variations of $\Z$-mixed Hodge structures: 
\begin{equation}
    R^3(pr_2\circ\bm{\pi}^{ur})_! \Z(-1)  \cong R^1(pr_2)_!( \bm{\cK})
\end{equation}
where $\bm{\cK}:= \ker(\bm{\Tr}: \bm{\overline{p}}^{ur}_*\Z \to \Z). $
By taking the relative Jacobian fibrations of both sides, we immediately get an isomorphism of varieties: 
\begin{equation}
        \begin{tikzcd}
     \mathcal{J}(\mathcal{X}^{ur}/B^{ur}) \arrow[rr, "\cong"]  \arrow[rd, "\pi^{ur}"']  && \bm{\Prym}  (\bm{\overline{\sm}^\circ},\sm^\circ) \cong \cM^{\Delta}(n,D)^{ur} \arrow[ld, "h_{\Delta}^{ur}"] \\
    & B^{ur}
\end{tikzcd}
\end{equation}
where $\bm{\Prym}  (\bm{\overline{\sm}^\circ},\sm^\circ)$ is the relative Prym fibration of the family of punctured spectral curves $\bm{\overline{\sm}^\circ}\to B^{ur}$. 
By the spectral correspondence proved in Proposition \ref{spectralcorrespondence}, we have $\bm{\Prym} (\bm{\overline{\sm}^\circ},\sm^\circ)\cong \cM^{\Delta}(n,D)^{ur}$. 

It remains to verify that the morphism $\mathcal{J}(\mathcal{X}^{ur}/B^{ur})\to \cM^{\Delta}(n,D)^{ur}$ intertwines the abstract Seiberg-Witten differentials constructed on each side. 
This can be easily obtained by modifying the results in \cite{diaconescu2006intermediate} \cite{beck2017hitchin} to our punctured case. Note that both the abstract Seiberg-Witten differentials come from the tautological section on $\bm{\widetilde{U}}$. In order to compare them, 
we again look at the simulteneous resolution of $S\to \ft/W$:
\begin{equation}
    \begin{tikzcd}
    \widetilde{S} \arrow[r] \arrow[d, "\widetilde{\sigma}"] & S \arrow[d, "\sigma"] \\
    \ft \arrow[r, "\phi"] & \ft/W
    \end{tikzcd}
\end{equation}
and recall that $\widetilde{\sigma}$ is $C^\infty$-trivial.

Taking a step further in (\ref{gluing of simulateneous resolution}), we can glue all the maps in the simultaneous resolution diagram to a commutative diagram
\begin{equation}
    \begin{tikzcd}
         \bm{\widetilde{S}}|_{\sm^\circ}\arrow[r,"\bm{\Psi}"]\arrow[d,"\bm{\widetilde{\sigma}}|_{\sm^\circ}"] &\bm{S}|_{\sm^\circ}\arrow[d,"\bm{\sigma}|_{\sm^\circ}"]\\
         \bm{\widetilde{U}}|_{\sm^\circ}\arrow[r,"\bm{\phi}|_{\sm^\circ}"] &\bm{U}|_{\sm^\circ}
    \end{tikzcd}
\end{equation}
where $\bm{\widetilde{S}}|_{\sm^\circ}:=\Tot(L|_{\sm^\circ}\times _{\C^*} \widetilde{S})$. It induces the following diagram by pulling back under the evaluation map, 
\begin{equation*}
    \begin{tikzcd}
         (\widetilde{\cX^\circ})^{ur}  \arrow[d, "(\widetilde{\pi^{\circ}})^{ur}"] \arrow[r, "\Psi"] & (\cX^\circ)^{ur} \arrow[d, "(\pi^\circ)^{ur}"] \\
         B^{ur} \arrow[r, equal] & B^{ur}
    \end{tikzcd} 
\end{equation*}
Then, $\Psi$ induces an inclusion
\begin{equation}
    \Psi^*: R^3(\pi^\circ)^{ur}_*\C \otimes \cO_{B^{ur}} \to  R^3(\widetilde{\pi^\circ})^{ur}_*\C \otimes \cO_{B^{ur}}
\end{equation}
so that we can lift $\lambda_{CY}^{\circ}$ to $(\widetilde{\cX^\circ})^{ur}$. As both are induced from the tautological section on $\bm{\widetilde{U}}$, under the following isomorphism
 \begin{equation*}
     R^3(\widetilde{\pi^\circ})^{ur}_*\C \otimes \cO_{B^{ur}} \cong R^1(f^\circ)^{ur}_*\ft \otimes \cO_{B^{ur}}
 \end{equation*}
where $(f^\circ)^{ur} : (\bm{\cs^\circ})^{ur}\to B^{ur} $ is the family of puntured cameral curves, the two abstract Seiberg-Witten differentials $\lambda_{CY}^{\circ}$ and $ \lambda_\Delta$ coincide \cite[Theorem 5.2.1]{beck2017hitchin}. 

\end{proof}

\begin{rem}(Adjoint type)
    The above argument is easily applied to the adjoint case, $PGL(n,\C)$, so that there is an isomorphism between (unordered) diagonally framed $PGL(n,\C)$-Hitchin system and Calabi-Yau integrable system. On the Hitchin side, we consider the dual Prym sheaf $\cK^\vee$. The key is to construct the relevant family of semi-abelian varieties on the Calabi-Yau side as mentioned in Remark \ref{rem:CYadjointtype}. 
\end{rem}

\appendix
\section{Summary of Deligne's theory of
  1-motives}\label{appendix}

In \cite{TheoriedeHodge}, Deligne gave a motivic description of variations of (polarized) $\Z$-mixed Hodge structures of type $\{(-1,-1), (-1,0), (0,-1), (0,0)\}$.
We recall the arguments in \cite{TheoriedeHodge} and study the special case which is of main interest in this paper. 

\begin{definition}
 An 1-motive $M$ over $\C$ consists of
 \begin{enumerate}
     \item $X$ free abelian group of finite rank, a complex abelian variety $A$, and a complex affine torus $\mathsf{T}$. 
     \item A complex semi-abelian variety $G$ which is an extension of $A$ by $\mathsf{T}$.
     \item A homomorphism $u:X \to G$.
 \end{enumerate}
\end{definition}
We will denote a 1-motive by $(X,A,T,G,u)$ or $M=[X \xrightarrow{u}G]$. 

\begin{proposition}\label{prop:Deligne}
The category of (polarizable) mixed Hodge structures of type \\
$\{(-1,-1), (-1,0), (0,-1), (0,0)\}$ is equivalent to the category of 1-motives. 
\end{proposition}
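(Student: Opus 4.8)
The plan is to prove the equivalence in the classical manner of \cite{TheoriedeHodge}: first construct the Hodge realization functor $M \mapsto T_\Z(M)$ from 1-motives to mixed Hodge structures, then build a quasi-inverse out of the weight filtration together with the Jacobian construction of the appendix and Carlson's description of extensions \cite{carlson1979extensions}. Throughout, ``of the required type'' abbreviates ``of type $\{(-1,-1),(-1,0),(0,-1),(0,0)\}$''.

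For the functor, given $M = [X \xrightarrow{u} G]$ with $0 \to T \to G \to A \to 0$, I would recall the exponential presentation $0 \to H_1(G,\Z) \to \mathrm{Lie}(G) \xrightarrow{\exp} G \to 0$ and define the Hodge realization as the fiber product $T_\Z(M) := X \times_G \mathrm{Lie}(G) = \{(x,v) \in X \times \mathrm{Lie}(G) : u(x) = \exp(v)\}$, which sits in an extension $0 \to H_1(G,\Z) \to T_\Z(M) \to X \to 0$ of free abelian groups. I equip it with the weight filtration $0 = W_{-3} \subset W_{-2} = H_1(T,\Z) \subset W_{-1} = H_1(G,\Z) \subset W_0 = T_\Z(M)$ and the Hodge filtration $F^0 T_\C(M) := \ker(T_\C(M) \to \mathrm{Lie}(G))$. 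Inspecting the graded pieces gives $\mathrm{Gr}^W_0 = X$ of type $(0,0)$, $\mathrm{Gr}^W_{-1} = H_1(A,\Z)$ polarizable of type $\{(-1,0),(0,-1)\}$, and $\mathrm{Gr}^W_{-2} = H_1(T,\Z)$ of type $(-1,-1)$, so $T_\Z(M)$ is a polarizable mixed Hodge structure of the required type; functoriality is immediate from the universal property of the fiber product.

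For the quasi-inverse, note that a polarizable MHS $H$ of the required type necessarily has weight filtration $0 \subset W_{-2}H \subset W_{-1}H \subset W_0 H = H$, with $\mathrm{Gr}^W_0 H$ pure of type $(0,0)$, $\mathrm{Gr}^W_{-1}H$ pure polarizable of weight $-1$, and $\mathrm{Gr}^W_{-2}H$ pure of type $(-1,-1)$. Applying the Jacobian construction of the appendix I set $X := \mathrm{Gr}^W_0 H$, $A := J(\mathrm{Gr}^W_{-1}H)$ (an abelian variety, by polarizability), $T := J(\mathrm{Gr}^W_{-2}H)$ (an affine torus, since $F^0 = 0$ forces $J = (\mathrm{Gr}^W_{-2}H)_\C/\mathrm{Gr}^W_{-2}H$), and $G := J(W_{-1}H)$. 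Functoriality of $J$ applied to $0 \to \mathrm{Gr}^W_{-2}H \to W_{-1}H \to \mathrm{Gr}^W_{-1}H \to 0$ exhibits $G$ as an extension of $A$ by $T$, hence a semi-abelian variety. Finally the class of $0 \to W_{-1}H \to H \to X \to 0$ in $\Ext^1_{\mathrm{MHS}}(X, W_{-1}H)$ produces, via Carlson's isomorphism $\Ext^1_{\mathrm{MHS}}(X, W_{-1}H) \cong \Hom(X, J(W_{-1}H)) = \Hom(X,G)$, the homomorphism $u : X \to G$, and thus the 1-motive $[X \xrightarrow{u} G]$.

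The step I expect to demand the most care is checking that these two constructions are mutually quasi-inverse, which rests on Carlson's extension theorem: for a mixed Hodge structure $F$ of negative weights there is a natural isomorphism $\Ext^1_{\mathrm{MHS}}(\Z(0), F) \cong F_\C/(F^0 F_\C + F_\Z) = J(F)$, and more generally $\Ext^1_{\mathrm{MHS}}(X, F) \cong \Hom(X, J(F))$ for a weight-$0$ lattice $X$. The technical heart is to verify that the class of $0 \to W_{-1}H \to H \to X \to 0$ under this isomorphism is exactly the map $u$ whose fiber product $X \times_G \mathrm{Lie}(G)$ reconstructs $H$ with its weight and Hodge filtrations; this amounts to identifying $\mathrm{Lie}(G)$ with $T_\C(M)/F^0$ and matching the exponential with Carlson's representative. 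Granting this matching, the round trip in both directions is recovered piece by piece, and naturality is automatic: a morphism of mixed Hodge structures is strict for $W$ and so induces compatible maps of $X$, $A$, $T$, $G$ and $u$, while conversely a morphism of 1-motives induces a morphism of the fiber products. This establishes the asserted equivalence of categories.
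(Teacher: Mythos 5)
Your proposal is correct, and its forward direction is literally the paper's (i.e.\ Deligne's) construction: the fiber product $T_\Z(M) = X \times_G \mathrm{Lie}(G)$ with $W_{-2} = H_1(\mathsf{T},\Z)$, $W_{-1} = H_1(G,\Z)$, and $F^0 = \ker(\alpha_\C)$. On the converse direction, however, your route differs from the paper's in two substantive ways, both to your credit. First, you take $G := J(W_{-1}H)$, which is the correct semi-abelian part of the 1-motive, whereas the paper sets $G := H_\C/(F^0 H_\C + H_\Z) = J(H)$. Since $\Gr^W_0 H$ has type $(0,0)$, one has $H_\C = W_{-1}H_\C + F^0 H_\C$, and the induced surjection $J(W_{-1}H) \twoheadrightarrow J(H)$ has kernel exactly $u(X)$; so the paper's formula agrees with yours only when $u(X)=0$ — in particular when $X=0$, which is the only case the paper subsequently uses (semi-abelian varieties versus mixed Hodge structures of type $\{(-1,-1),(-1,0),(0,-1)\}$). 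Second, you actually produce the homomorphism $u$, via Carlson's isomorphism $\Ext^1_{\mathrm{MHS}}(X, W_{-1}H) \cong \Hom(X, J(W_{-1}H))$ applied to the class of $0 \to W_{-1}H \to H \to X \to 0$; the paper's converse list $(A,\mathsf{T},G,X)$ never defines $u$ at all. What your approach buys is a genuinely complete proof of the stated equivalence, including the $(0,0)$ part; what the paper's sketch buys is brevity sufficient for its application. Your closing reduction of mutual quasi-inverseness to matching Carlson's representative with the exponential presentation (identifying $\mathrm{Lie}(G)$ with $T_\C(M)/F^0$) is exactly the standard verification in Deligne's theory and is correct as outlined.
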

\begin{proof}
    Given a 1-motive $M$, Deligne constructed a mixed Hodge structure $(T(M)_\Z, W,F)$ of type $\{(-1,-1), (-1,0), (0,-1), (0,0)\}$ as follows. Define a lattice $T(M)_\Z$ as the fiber product 
\begin{equation}
    \begin{tikzcd}
    T(M)_\Z \arrow[r, "\beta"] \arrow[d, "\alpha"] & X \arrow[d, "u"]  \\
    Lie(G) \arrow[r, "\exp"] & G 
    \end{tikzcd}
\end{equation}
 
The weight filtration on $T(M)_\Z$ is given by setting
$W_{-1}T(M)_\Z:=H_1(G,\Z)=\ker(\beta)$ and $W_{-2}T(M)_\Z=H_1(T,\Z)$. Also, by linearly extending $\alpha:T(M)_\Z \to Lie(G)$ to $\C$, we define $F^0(T(M)_\Z \otimes \C):=\ker(\alpha_\C)$. By construction $\Gr_{-1}^W(T(M)_\Z)=H_1(A,\Z)$ with the usual Hodge filtration and is therefore polarizable.

Conversely, if $H:=(H_\Z, W, F)$ is a mixed Hodge structure of the given type with $\Gr_{-1}^W(H_\Z)$ polarizable, then one can construct a 1-motive by taking
\begin{enumerate}
    \item $A:= \Gr_{-1}^W(H_\C) / (F^0\Gr_{-1}^W(H_\C)+\Gr_{-1}^W(H_\Z))$
    \item $\mathsf{T}:=\Gr_{-2}^W(H_\C)/\Gr_{-2}^W(H_\Z)$
    \item $G:=H_\C/(F^0H_\C+H_\Z)$
    \item $X:=\Gr_0^W(H_\Z)$
\end{enumerate}
\end{proof}

In particular, if $X$ is trivial, the 1-motive $M$ is equivalent to a semi-abelian variety $G$. By Proposition \ref{prop:Deligne}, we have an equivalence between the abelian category of semi-abelian varieties and the abelian category of (polarizable) $\Z$-mixed Hodge structures of type $\{(-1,-1), (-1,0), (0,-1)\}$. 

\begin{example}
A typical example coming from geometry is the mixed Hodge structure on the first homology group of a punctured curve. Let $C$ be a Riemann surface and $D \subset C$ be a reduced divisor. The first homology group $H_\Z=H_1(C \setminus D, \Z)$ carries a $\Z$-mixed Hodge structure of type $\{(-1,-1),(-1,0),(0,-1)\}$ where $\Gr^W_{-1}(H_\C)=H_1(C,\Z)\otimes \C$. Moreover, it admits a degenerate intersection pairing $Q:H_\Z \times H_\Z \to \Z$ whose kernel is $W_{-2}H_\C \cap H_\Z$. Note that it induces a polarization on $\Gr^W_{-1}(H_\C)$ and so gives rise to the type of object in proposition \ref{prop:Deligne}. In other words, we get a semi-abelian variety $G$ by taking the Jacobian of $(H_\Z, W_\bullet, F^\bullet)$ as follows
\begin{equation*}
\begin{aligned}
    & G:=J(H)=H_\C/(F^{0}H_\C + H_\Z) \\
    & A:=J_{\text{cpt}}(H)=\Gr^W_{-1}H_\C/(\Gr^W_{-1}F^{0}H_\C +H_\Z) \\
    & \mathsf{T}:=W_{-2}H_\C/W_{-2}H_\Z\\
\end{aligned}
\end{equation*}

\end{example}
We call such integral mixed Hodge structure a \textit{semi-polarized $\Z$-mixed Hodge structure}. 
Moreover, consider the dual mixed Hodge structure $H^\vee$ which is of type $\{(0,1), (1,0), (1,1)\}$. Geometrically it corresponds to the first cohomology $H^1(C \setminus D)$ of the punctured Riemann surface $C \setminus D$. The associated Jacobian $J(H^\vee)=H^\vee_\C/(F^1H^\vee_\C+H_\Z)$ is no longer a semi-abelian variety, but just a complex torus.

\section*{Acknowledgement}
We would like to thank our advisors Ron Donagi and Tony Pantev for their encouragement, constant support on this project and all the useful comments on the earlier drafts of this paper. We are also grateful to Rodrigo Barbosa and Chenglong Yu for many useful discussions. We would also like to thank the referees for their useful comments and pointed out that the reference for a similar construction of Calabi-Yau threefolds used in our paper of which we were not previously aware.


\printbibliography

@incollection{kontsevich2014wall,
  title={Wall-crossing structures in Donaldson--Thomas invariants, integrable systems and mirror symmetry},
  author={Kontsevich, Maxim and Soibelman, Yan},
  booktitle={Homological mirror symmetry and tropical geometry},
  pages={197--308},
  year={2014},
  publisher={Springer}
}

@article {diaconescu2006intermediate,
    AUTHOR = {Diaconescu, D. E. and Donagi, R. and Pantev, T.},
     TITLE = {Intermediate {J}acobians and {$ADE$} {H}itchin systems},
   JOURNAL = {Math. Res. Lett.},
  FJOURNAL = {Mathematical Research Letters},
    VOLUME = {14},
      YEAR = {2007},
    NUMBER = {5},
     PAGES = {745--756},
}

@article{smith2015quiver,
  title={Quiver algebras as Fukaya categories},
  author={Smith, Ivan},
  journal={Geom. Topol.},
  volume={19},
  number={5},
  pages={2557--2617},
  year={2015},
  publisher={Mathematical Sciences Publishers}
}

@article {Kjiri2000GeneralizedHitchin,
    AUTHOR = {Kjiri, M.},
     TITLE = {The {$G$}-generalized {H}itchin systems and {P}rym varieties},
   JOURNAL = {J. Math. Phys.},
  FJOURNAL = {Journal of Mathematical Physics},
    VOLUME = {41},
      YEAR = {2000},
    NUMBER = {11},
     PAGES = {7797--7807},
      ISSN = {0022-2488},
   MRCLASS = {14H70 (14H40 37J35 37K20)},
  MRNUMBER = {1788602},
MRREVIEWER = {Esteban G\'{o}mez Gonz\'{a}lez},
       DOI = {10.1063/1.1313770},
       URL = {https://doi.org/10.1063/1.1313770},
}

@article{biswas2019symplecticGHiggs,
  title={Moduli Spaces of Framed G–Higgs Bundles and Symplectic Geometry},
  author={Biswas, Indranil and Logares, Marina and Pe{\'o}n-Nieto, Ana},
  journal={Comm. Math. Phys.},
  volume={376},
  number={},
  pages={1875-1908},
  year={2020}
}

@article {biswas2018symplectic,
    AUTHOR = {Biswas, Indranil and Logares, Marina and Pe\'{o}n-Nieto, Ana},
     TITLE = {Symplectic geometry of a moduli space of framed {H}iggs
              bundles},
   JOURNAL = {Int. Math. Res. Not. IMRN},
  FJOURNAL = {International Mathematics Research Notices. IMRN},
      YEAR = {2021},
    NUMBER = {8},
     PAGES = {5623--5650},
      ISSN = {1073-7928},
   MRCLASS = {53D30 (14H60 32G15)},
  MRNUMBER = {4251259},
       DOI = {10.1093/imrn/rnz016},
       URL = {https://doi.org/10.1093/imrn/rnz016},
}

@article {markman1994spectral,
    AUTHOR = {Markman, Eyal},
     TITLE = {Spectral curves and integrable systems},
   JOURNAL = {Compositio Math.},
  FJOURNAL = {Compositio Mathematica},
    VOLUME = {93},
      YEAR = {1994},
    NUMBER = {3},
     PAGES = {255--290},
      ISSN = {0010-437X},
   MRCLASS = {14D20 (14H40 14H60 58F07)},
  MRNUMBER = {1300764},
MRREVIEWER = {Emma Previato},
       URL = {http://www.numdam.org/item?id=CM_1994__93_3_255_0},
}

@article{arapura1997abel,
  title={On the Abel-Jacobi map for non-compact varieties},
  author={Arapura, Donu and Oh, Kyungho},
  journal={Osaka J. Math.},
  volume={34},
  number={4},
  pages={769--781},
  year={1997},
  publisher={Osaka University and Osaka City University, Departments of Mathematics}
}

@article{donagi1993decomposition,
  title={Decomposition of spectral covers},
  author={Donagi, Ron},
  journal={Ast{\'e}risque},
  volume={218},
  pages={145--175},
  year={1993}
}

@book{slodowy1980four,
  title={Four lectures on simple groups and singularities},
  author={Slodowy, Peter},
  year={1980},
  publisher={Mathematical Institute, Rijksuniversiteit}
}

@article{katz1992gorenstein,
 AUTHOR = {Katz, Sheldon and Morrison, David R.},
     TITLE = {Gorenstein threefold singularities with small resolutions via
              invariant theory for {W}eyl groups},
   JOURNAL = {J. Algebraic Geom.},
  FJOURNAL = {Journal of Algebraic Geometry},
    VOLUME = {1},
      YEAR = {1992},
    NUMBER = {3},
     PAGES = {449--530},
}

@article{HHP2010projective,
%  title={Frobenius manifolds, projective special geometry
%and Hitchin systems},
 % author={C. Hertling, L. Hoevenaars, and H. Posthuma},
  %journal={J. Reine Angew. Math., 649:117–165},
  %year={2010}
%}

@article{hitchin1987self,
  title={The self-duality equations on a Riemann surface},
  author={Hitchin, Nigel},
  journal={Proc. London Math. Soc. (3)},
  VOLUME = {55},
      YEAR = {1987},
    NUMBER = {1},
     PAGES = {59--126},
      ISSN = {0024-6115},
   MRCLASS = {32G13 (14F05 14H15 32L10 53C05 58E99 81E13)},
  MRNUMBER = {887284},
MRREVIEWER = {Mitsuhiro Itoh},
       DOI = {10.1112/plms/s3-55.1.59},
       URL = {https://doi.org/10.1112/plms/s3-55.1.59},
}

@article{hitchin1987stable,
  title={Stable bundles and integrable systems},
  author={Hitchin, Nigel},
  journal={Duke Math. J.},
  volume={54},
  number={1},
  pages={91--114},
  year={1987},
  publisher={Duke University Press}
}

@article{beck2017hitchin,
  title={Hitchin and Calabi-Yau integrable systems via variations of Hodge structures},
  author={Beck, Florian},
  JOURNAL = {Q. J. Math.},
  FJOURNAL = {The Quarterly Journal of Mathematics},
    VOLUME = {71},
      YEAR = {2020},
    NUMBER = {4},
     PAGES = {1345--1375},
      ISSN = {0033-5606},
   MRCLASS = {14D07 (14D24 14H70 22E50)},
  MRNUMBER = {4186522},
       DOI = {10.1093/qmath/haaa037},
       URL = {https://doi.org/10.1093/qmath/haaa037},
}

@article{beck2020folding,
  title={Folding of Hitchin systems and crepant resolutions},
  author={Beck, Florian and Donagi, Ron and Wendland, Katrin},
  journal={Int. Math. Res. Not. IMRN},
  year={2021},
  month = {02}
}

@article{beck2019calabi,
  title={Calabi--Yau orbifolds over Hitchin bases},
  author={Beck, Florian},
  journal={J. Geom. Phys.},
  volume={136},
  pages={14--30},
  year={2019},
  publisher={Elsevier}
}

@article{anderson2014t,
  title={T-branes and geometry},
  author={Anderson, Lara B and Heckman, Jonathan J and Katz, Sheldon},
  journal={J. High Energy Phys.},
  volume={2014},
  number={5},
  pages={80},
  year={2014},
  publisher={Springer}
}

@article{anderson2017t,
  title={T-Branes at the Limits of Geometry},
  author={Anderson, Lara B and Heckman, Jonathan J and Katz, Sheldon and Schaposnik, Laura P},
  journal={J. High Energy Phys.},
  volume={2017},
  number={10},
  pages={58},
  year={2017},
  publisher={Springer}
}

@article {bottacin1995symplectic,
    AUTHOR = {Bottacin, Francesco},
     TITLE = {Symplectic geometry on moduli spaces of stable pairs},
   JOURNAL = {Ann. Sci. \'{E}cole Norm. Sup. (4)},
  FJOURNAL = {Annales Scientifiques de l'\'{E}cole Normale Sup\'{e}rieure. Quatri\`eme
              S\'{e}rie},
    VOLUME = {28},
      YEAR = {1995},
    NUMBER = {4},
     PAGES = {391--433},
      ISSN = {0012-9593},
   MRCLASS = {14D20 (14J60 58F07)},
  MRNUMBER = {1334607},
MRREVIEWER = {Nitin Nitsure},
       URL = {http://www.numdam.org/item?id=ASENS_1995_4_28_4_391_0},
}

@article {TheoriedeHodge,
    AUTHOR = {Deligne, Pierre},
     TITLE = {Th\'{e}orie de {H}odge. {III}},
   JOURNAL = {Publ. Math. Inst. Hautes \'{E}tudes Sci. },
  FJOURNAL = {Institut des Hautes \'{E}tudes Scientifiques. Publications
              Math\'{e}matiques},
    NUMBER = {44},
      YEAR = {1974},
     PAGES = {5--77},
      ISSN = {0073-8301},
   MRCLASS = {14C30 (14F15)},
  MRNUMBER = {498552},
MRREVIEWER = {J. H. M. Steenbrink},
       URL = {http://www.numdam.org/item?id=PMIHES_1974__44__5_0},
}

@article{DonagiGaitsgory,
  title={The gerbe of Higgs bundles},
  author={Ron Donagi and Dennis Gaitsgory},
  journal={Transform. Groups},
  number={7(2)},
  pages={109-153},
  year={2002}
}

@article{Langlandsduality,
  title={Langlands duality for Hitchin systems},
  author={Ron Donagi and Tony Pantev},
  journal={Invent. Math.},
  number={189(3)},
  pages={653-735},
  year={2012}
}

@article{mixedperiod,
  title={Variation of Mixed Hodge Structure and
the Torelli Problem},
  author={Masa-Hiko Saito, Yuji Shimizu and Sampei Usui},
  journal={Adv. Stud. Pure Math.},
  number={10},
  pages={649-693},
  year={1987}
}

@article{diaconescu2006geometric,
  title={Geometric transitions and integrable systems},
  author={Diaconescu, D-E and Dijkgraaf, R and Donagi, R and Hofman, Christiaan and Pantev, Tony},
  journal={Nuclear Phys. B},
  volume={752},
  number={3},
  pages={329--390},
  year={2006},
  publisher={Elsevier}
}

@article{Donagi1997SWintegrablesystem,
  title={Seiberg-Witten integrable systems},
  author={Ron Donagi},
  journal={Proc. Sympos. Pure Math.},
  number={62},
  pages={3–43},
  year={1997}
}

@article {SymplecticImplosion,
    AUTHOR = {Guillemin, Victor and Jeffrey, Lisa and Sjamaar, Reyer},
     TITLE = {Symplectic implosion},
   JOURNAL = {Transform. Groups},
  FJOURNAL = {Transformation Groups},
    VOLUME = {7},
      YEAR = {2002},
    NUMBER = {2},
     PAGES = {155--184},
      ISSN = {1083-4362},
   MRCLASS = {53D20},
  MRNUMBER = {1903116},
MRREVIEWER = {Christopher B. Willett},
       DOI = {10.1007/s00031-002-0009-y},
       URL = {https://doi.org/10.1007/s00031-002-0009-y},
}

@article{simpson1994moduli,
  title={Moduli of representations of the fundamental group of a smooth projective variety I},
  author={Simpson, Carlos T},
  journal={Publ. Math. Inst. Hautes \'{E}tudes Sci.},
  volume={79},
  number={1},
  pages={47--129},
  year={1994},
  publisher={Springer}
}

@article{Simpson1994ModuliOR,
  title={Moduli of representations of the fundamental group of a smooth projective variety. II},
  author={C. Simpson},
  journal={Publ. Math. Inst. Hautes \'{E}tudes Sci.},
  year={1994},
  volume={80},
  pages={5-79}
}

@book{peters2008mixed,
  title={Mixed Hodge structures},
  author={Peters, Chris AM and Steenbrink, Joseph HM},
  volume={52},
  year={2008},
  publisher={Springer Science \& Business Media}
}

@article{Hausel_2003,
   title={Mirror symmetry, Langlands duality, and the Hitchin system},
   volume={153},
   ISSN={1432-1297},
   url={http://dx.doi.org/10.1007/s00222-003-0286-7},
   DOI={10.1007/s00222-003-0286-7},
   number={1},
   journal={Invent. Math.},
   publisher={Springer Science and Business Media LLC},
   author={Hausel, Tamas and Thaddeus, Michael},
   year={2003},
   month={07},
   pages={197–229}
}

@article {Kapustin_2007,
    AUTHOR = {Kapustin, Anton and Witten, Edward},
     TITLE = {Electric-magnetic duality and the geometric {L}anglands
              program},
   JOURNAL = {Commun. Number Theory Phys.},
  FJOURNAL = {Communications in Number Theory and Physics},
    VOLUME = {1},
      YEAR = {2007},
    NUMBER = {1},
     PAGES = {1--236},
      ISSN = {1931-4523},
   MRCLASS = {14D21 (22E46 32G13 81T45 81T60)},
  MRNUMBER = {2306566},
MRREVIEWER = {Siye Wu},
       DOI = {10.4310/CNTP.2007.v1.n1.a1},
       URL = {https://doi.org/10.4310/CNTP.2007.v1.n1.a1},
}

@inproceedings{donagi1994cubics,
  AUTHOR = {Donagi, Ron and Markman, Eyal},
     TITLE = {Cubics, integrable systems, and {C}alabi-{Y}au threefolds},
 BOOKTITLE = {Proceedings of the {H}irzebruch 65 {C}onference on {A}lgebraic
              {G}eometry ({R}amat {G}an, 1993)},
    SERIES = {Israel Math. Conf. Proc.},
    VOLUME = {9},
     PAGES = {199--221},
 PUBLISHER = {Bar-Ilan Univ., Ramat Gan},
      YEAR = {1996},
   MRCLASS = {14J32 (14C30 14D07 32G20 58F07)},
  MRNUMBER = {1360503},
}

@incollection{donagi1996spectral,
  title={Spectral covers, algebraically completely integrable, Hamiltonian systems, and moduli of bundles},
  author={Donagi, Ron and Markman, Eyal},
  booktitle={Integrable systems and quantum groups},
  pages={1--119},
  year={1996},
  publisher={Springer}
}

@article{de2012topology,
  title={Topology of Hitchin systems and Hodge theory of character varieties: the case $A_1$},
  author={de Cataldo, Mark Andrea A and Hausel, Tam{\'a}s and Migliorini, Luca},
  journal={Ann. of Math. (2)},
  pages={1329--1407},
  year={2012},
  publisher={JSTOR}
}

@article{de2019hitchin,
  title={Hitchin fibrations, abelian surfaces, and the P= W conjecture},
  author={de Cataldo, Mark Andrea A and Maulik, Davesh and Shen, Junliang},
  journal={J. Amer. Math. Soc.},
  year={2020}
}

@article {ngo2010lemme,
    AUTHOR = {Ng\^{o}, Bao Ch\^{a}u},
     TITLE = {Le lemme fondamental pour les alg\`ebres de {L}ie},
   JOURNAL = {Publ. Math. Inst. Hautes \'{E}tudes Sci.},
  FJOURNAL = {Publications Math\'{e}matiques. Institut de Hautes \'{E}tudes
              Scientifiques},
    NUMBER = {111},
      YEAR = {2010},
     PAGES = {1--169},
      ISSN = {0073-8301},
   MRCLASS = {22E35 (11S37 14D23 14G35 22E50)},
  MRNUMBER = {2653248},
MRREVIEWER = {R. P. Langlands},
       DOI = {10.1007/s10240-010-0026-7},
       URL = {https://doi.org/10.1007/s10240-010-0026-7},
}

@article{ngo2006fibration,
  title={Fibration de Hitchin et endoscopie},
  author={Ng{\^o}, Bao Ch{\^a}u},
  journal={Invent. Math.},
  volume={164},
  number={2},
  pages={399--453},
  year={2006},
  publisher={Springer}
}

@incollection {carlson1979extensions,
    AUTHOR = {Carlson, James A.},
     TITLE = {Extensions of mixed {H}odge structures},
 BOOKTITLE = {Journ\'{e}es de {G}\'{e}ometrie {A}lg\'{e}brique d'{A}ngers, {J}uillet
              1979/{A}lgebraic {G}eometry, {A}ngers, 1979},
     PAGES = {107--127},
 PUBLISHER = {Sijthoff \& Noordhoff, Alphen aan den Rijn---Germantown, Md.},
      YEAR = {1980},
   MRCLASS = {14C30 (14H15 32G13)},
  MRNUMBER = {605338},
MRREVIEWER = {Steven Zucker},
}

@article{smith2020floer,
    title={Floer theory of higher rank quiver 3-folds},
   author={Smith, Ivan},
  journal={Comm. Math. Phys.},
  number={388},
  pages={1181-1203},
  year={2021},
}

@misc{abrikosov2018potentials,
      title={Potentials for moduli spaces of $A_m$-local systems on surfaces}, 
      author={Efim Abrikosov},
      year={2018},
      eprint={1803.06353},
      archivePrefix={arXiv},
}

@article {beauville1989,
    AUTHOR = {Beauville, Arnaud and Narasimhan, M. S. and Ramanan, S.},
     TITLE = {Spectral curves and the generalised theta divisor},
   JOURNAL = {J. Reine Angew. Math.},
  FJOURNAL = {Journal f\"{u}r die Reine und Angewandte Mathematik. [Crelle's
              Journal]},
    VOLUME = {398},
      YEAR = {1989},
     PAGES = {169--179},
      ISSN = {0075-4102},
   MRCLASS = {14H60 (14D20 14H42)},
  MRNUMBER = {998478},
MRREVIEWER = {P. E. Newstead},
       DOI = {10.1515/crll.1989.398.169},
       URL = {https://doi.org/10.1515/crll.1989.398.169},
}

@article {hurtubisemarkman98,
    AUTHOR = {Hurtubise, J. C. and Markman, E.},
     TITLE = {Rank {$2$}-integrable systems of {P}rym varieties},
   JOURNAL = {Adv. Theor. Math. Phys.},
  FJOURNAL = {Advances in Theoretical and Mathematical Physics},
    VOLUME = {2},
      YEAR = {1998},
    NUMBER = {3},
     PAGES = {633--695},
      ISSN = {1095-0761},
   MRCLASS = {14K05 (14H60 58F07)},
  MRNUMBER = {1646844},
MRREVIEWER = {Emma Previato},
       DOI = {10.4310/ATMP.1998.v2.n3.a10},
       URL = {https://doi.org/10.4310/ATMP.1998.v2.n3.a10},
}

@article {usui1983,
    AUTHOR = {Usui, Sampei},
     TITLE = {Variation of mixed {H}odge structures arising from family of
              logarithmic deformations},
   JOURNAL = {Ann. Sci. \'{E}cole Norm. Sup. (4)},
  FJOURNAL = {Annales Scientifiques de l'\'{E}cole Normale Sup\'{e}rieure. Quatri\`eme
              S\'{e}rie},
    VOLUME = {16},
      YEAR = {1983},
    NUMBER = {1},
     PAGES = {91--107},
      ISSN = {0012-9593},
   MRCLASS = {14D05 (14C30 14J15 32G13)},
  MRNUMBER = {719764},
MRREVIEWER = {Werner Kleinert},
       URL = {http://www.numdam.org/item?id=ASENS_1983_4_16_1_91_0},
}

\end{document}